\documentclass[11pt,reqno]{amsart}
\usepackage{color}
\usepackage{mathtools}
\usepackage[sort,numbers]{natbib}
\usepackage{bbm}

\usepackage{tikz}
\usetikzlibrary{shapes.misc}
\usepackage{graphicx}
\usepackage{subcaption}
\usepackage{tikz}
\usetikzlibrary{decorations.markings}
\usepackage{soul}

\usepackage{hyperref}
\hypersetup{
	colorlinks,
	linkcolor=blue,
	citecolor=blue,
	filecolor=blue,
	urlcolor=blue,
	pdftitle={},
	pdfsubject={},
	pdfkeywords={}
}

\usepackage[utf8]{inputenc} 
\usepackage[T1]{fontenc}    
\usepackage{url}            
\usepackage{booktabs}       
\usepackage{amsfonts}       
\usepackage{nicefrac}       
\usepackage{microtype}      
\usepackage{amsmath} 
\usepackage{amsthm}
\usepackage{amssymb}
\usepackage{mathrsfs}
\newtheorem{theorem}{Theorem}[section]

\newtheorem{lemma}[theorem]{Lemma}

\newtheorem{corollary}[theorem]{Corollary}

\theoremstyle{definition}
\newtheorem{definition}[theorem]{Definition}

\theoremstyle{remark}
\newtheorem{remark}[theorem]{Remark}

\usepackage{comment}
\allowdisplaybreaks
\usepackage[titletoc]{appendix}
\usepackage{systeme}

\newcommand{\pr}{\mathbb{P}}

\newcommand{\R}{\mathbb{R} }

\usepackage{tikz}
\usepackage{pgfplots}
\usepgfplotslibrary{external}
\pgfplotsset{compat=1.10}
\usetikzlibrary{positioning}
\usetikzlibrary{patterns}
\usetikzlibrary{intersections,arrows.meta,positioning,calc}
\usepackage{comment}
\usepackage[shortlabels]{enumitem}
\usepackage{todonotes}

\usepackage{caption}
\usepackage[margin=1in,footskip=0.25in]{geometry}

\tikzset{cross/.style={cross out, draw=black, minimum size=2.5*(#1-\pgflinewidth), inner sep=2pt, outer sep=0.5pt},
	cross/.default={1pt}}
\usepackage{xcolor}
\usetikzlibrary{calc,arrows}
\newcommand{\boundellipse}[3]
{(#1) ellipse (#2 and #3)
}

\newcommand{\E}{\ensuremath{\mathbb{E}}}
\newcommand{\N}{\ensuremath{\mathbb{N}}}

\newcommand{\sign}[1]{\mathrm{Sign} }
\newcommand{\la}{\lambda}
\newcommand{\wla}{\widehat{\lambda}}
\newcommand{\wal}{\widehat{\alpha}}
\usepackage{environ}
\NewEnviron{eq}{%
	\begin{equation}\begin{split}
			\BODY
	\end{split}\end{equation}
}

\newcommand{\Exp}{\text{Exp}}

\newcommand{\PPP}{\text{PPP}}

\numberwithin{equation}{section}

\usepackage{algorithm}
\usepackage{algpseudocode}



\renewcommand{\P}{\mathbb P}
\renewcommand{\E}{\mathbb E}

\title{Supercriticality of the SIRS Process on Random Networks}
\author{Phuc Lam and Oanh Nguyen}
\address{Division of Applied Mathematics\\ Brown University\\  Providence, RI 02906, USA}
\email{phuc\_lam@brown.edu}
\address{Division of Applied Mathematics\\ Brown University\\  Providence, RI 02906, USA}
\email{oanh\_nguyen1@brown.edu}
\date{}
\thanks{Nguyen is supported by NSF grant DMS-2246575.}

\let\oldtocsection=\tocsection

\let\oldtocsubsection=\tocsubsection

\let\oldtocsubsubsection=\tocsubsubsection

\renewcommand{\tocsection}[2]{\hspace{0em}\oldtocsection{#1}{#2}}
\renewcommand{\tocsubsection}[2]{\hspace{2em}\oldtocsubsection{#1}{#2}}
\renewcommand{\tocsubsubsection}[2]{\hspace{4em}\oldtocsubsubsection{#1}{#2}}

\begin{document}	
	\maketitle
	\begin{abstract}We study how long the SIRS process persists or how quickly it reaches extinction across various network topologies.   Our results provide a three-part characterization of this process:  In finite sparse graphs, we prove the existence of a regime where the process survives for an exponentially long time. In heavy-tailed networks with power-law-like exponents, we show that for all range of parameters, the survival time is exponential. Finally, for infinite trees, we find sufficient conditions for strong survival, showing the root is re-infected infinitely often even for light-tailed distributions like the Poisson distribution. 
	\end{abstract}
	
	\section{Introduction}
	The study of epidemic spreading on complex networks has long been centered on the interplay between the topology of the graph and the parameters of the infection. Traditional models, such as the {SIS} (Susceptible-Infected-Susceptible) and {SIR} (Susceptible-Infected-Recovered) models, capture essential features of disease transmission but often omit the critical phase of temporary immunity. In many biological and social systems, recovery does not grant permanent immunity; instead, individuals undergo a period of resistance before becoming susceptible again. This phenomenon is captured by the {SIRS} process, where the transition from the Recovered (R) state back to the Susceptible (S) state is governed by a deimmunization rate, $\alpha$.
	
	For a given graph $G = (V, E)$, the SIRS process with infection rate $\la$, recovery rate $1$, and deimmunization rate $\alpha$ is defined as a continuous-time Markov chain on the state space $\{S, I, R\}^V$. The dynamics are governed by the following independent transitions:
	\begin{itemize}
		\item {Infection and Healing:} Each infected vertex transmits the infection to each of its neighbors at rate $\la$. Simultaneously, infected vertices transition to the recovered state at rate $1$.
		\item {Loss of Immunity:} Recovered vertices remain immune to further infection for a stochastic duration, returning to the susceptible state at rate $\alpha$.
		\item {Independence:} All infection, recovery, and deimmunization events occur as independent Poisson processes.
	\end{itemize}
	
	By incorporating a temporary period of immunity, the SIRS model effectively captures the longitudinal behavior of diseases such as influenza and COVID-19. It has been widely applied to study epidemic spreading and forest fire dynamics, supported by a wealth of empirical literature \cite{kuperman2001small, mollison1986modelling, mollison1985spatial, wang2017spreading}. Despite its utility, the mathematical analysis of the SIRS process is notoriously difficult due to the non-monotonic nature of the state transitions. Consequently, rigorous theoretical results are sparse, with notable exceptions including studies on the lattice $\mathbb{Z}^2$ \cite{durrett1991epidemics} and more recent work by Friedrich et al. \cite{friedrich2022analysis}, \cite{lam2024optimal}. A comprehensive overview of these challenges is provided in Chapter 4 of Durrett's recent monograph \cite{durrett2021dynamics}.
	
	A more tractable limiting case of this model is the {SIS process}, which emerges as $\alpha \to \infty$ (implying instantaneous loss of immunity). Since its introduction by Harris in 1974 \cite{harris1974}, the survival and phase transition properties of the SIS process have been scrutinized across diverse topologies. On infinite structures, critical thresholds for $\lambda$ have been established for the integer lattice $\mathbb{Z}^d$ \cite{harris1974} and $d$-regular trees \cite{liggett96, pemantle92, stacey96}. Extensive research has also characterized survival times on finite structures, including lattice cubes \cite{durrettliu88, durrettschon88, mountford93}, trees of finite depth \cite{cranston2014, stacey01}, random $d$-regular graphs \cite{lalleysu17, mourrat16}, and sparse random graphs including Galton-Watson trees \cite{bhamidi2021, nam2022critical, nguyen2022subcritical, huangdurrett20}.
	
	While the phase transitions of SIS and SIR models are well-understood---particularly the existence of a critical infection rate $\lambda_c$ below which the epidemic dies out globally. In particular, Huang--Durrett \cite{huangdurrett20} and Bhamidi--Nam-Nguyen--Sly \cite{bhamidi2021} established a fundamental characterization of the contact process by proving that the survival threshold $\lambda_{1}$ for a Galton-Watson tree is strictly positive if and only if the offspring distribution possesses an exponential tail. Extending these results to random graphs with a degree distribution $\mu$ that has an exponential tail, they demonstrated a sharp phase transition: for sufficiently small infection rates, the process exhibits "short survival" lasting only $n^{1+o(1)}$ time, whereas for large enough rates, it achieves "long survival" persisting for $e^{\Theta(n)}$ time.

	For the SIRS model, these questions remain wide open. In this paper, we investigate the threshold for strong survival and long-term persistence of the SIRS process on both infinite trees and random graphs generated via the configuration model. Our contribution is threefold:

	{\bf Exponential Survival in the Sparse Configuration Model.} For large graphs, we analyze the survival time of the SIRS process on the configuration model $G_n$. While prior work by Friedrich--Göbel--Klodt--Krejca--Pappik \cite{friedrich2022analysis} has established that SIRS persists for exponential time on dense graphs (namely \cite[Corollary 1.5]{friedrich2022analysis} for Erdos-Renyi random graphs with average degree $\gg \log n$), we show that even sparse graphs (namely those with bounded average degree) possess the necessary structural properties to sustain the process. Specifically, Theorem \ref{thm:longSurvivalAlmostSIS} demonstrates that under the standard supercriticality condition $\mathbb{E}[D(D-2)] > 0$—which ensures the existence of a giant component—the SIRS process enters a long-survival phase where the infection persists for time $e^{\Theta(n)}$, provided the rates $\lambda$ and $\alpha$ are sufficiently large.

	\begin{theorem}\label{thm:longSurvivalAlmostSIS}
		Suppose $\mu \in \mathcal P(\N)$ is such that $$\E_{D \sim \mu} D(D - 2) > 0,$$
		for some $c > 0$. Consider the SIRS process on $G_n \sim \mathcal G_{conf}(n, \mu)$, where all vertices are initially infected. Then there exists $\alpha_{\mu} > 1$ such that for $\alpha \ge \alpha_{\mu}, \la \ge \alpha_{\mu}\alpha$, the survival time of SIRS process on $G_n$ is $\exp(\Theta(n))$ w.h.p.
	\end{theorem}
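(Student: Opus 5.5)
We need both an upper bound $e^{O(n)}$ and a lower bound $e^{\Omega(n)}$. The upper bound is soft. From any configuration, with probability at least $e^{-Cn}$ every infected vertex recovers during a unit time window before transmitting anything. The number of edges is $O(n)$ w.h.p. because $\mu$ has finite second moment in the relevant regime; if not, we truncate. Hence the extinction time is stochastically dominated by a geometric number of unit windows with success probability $e^{-Cn}$, so it is at most $e^{Cn}$ w.h.p. The lower bound is the real content. The plan is to compare SIRS with large $\alpha$ and $\lambda\ge\alpha_\mu\alpha$ to a contact process with large infection rate on a suitable bounded-degree substructure of the giant component. For that contact process, exponential survival is known from the SIS literature (Mourrat--Valesin, Lalley--Su, Bhamidi--Nam--Nguyen--Sly).

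First, structural step. Since $\E D(D-2)>0$, w.h.p. $G_n$ has a giant component. After truncating degrees at a large constant $K$, the $K$-core or giant component still contains a subgraph $H$ on $\Theta(n)$ vertices with maximum degree at most $K$. We choose $H$ to be an expander-like, or at least ``locally tree-like with linear-size connected pieces'', structure. Concretely, I would extract a collection of $\Theta(n)$ vertex-disjoint stars or short paths, each of bounded size, connected through $H$ so that $H$ is connected with linearly many edges. Known results on the configuration model give a connected subgraph with linear size and bounded degree whose isoperimetric profile is good enough for the standard ``infection spreads to a constant fraction, and a fraction cannot die in sub-exponential time'' argument.

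Second, coupling step. Define a ``good'' vertex event on each edge $uv$ of $H$. Whenever $u$ is infected, within time $O(1/\alpha)$ it passes infection to $v$ provided $v$ is susceptible. If $v$ is recovered, $v$ becomes susceptible at rate $\alpha$, and then gets infected at rate $\lambda\gg\alpha$ before $u$ heals. The key estimate is that on each unit-length time block, the event ``$u$ infected at the start $\Rightarrow$ all neighbors in $H$ are infected at the end, and $u$ is reinfected'' has probability $1-\varepsilon(\alpha)$ with $\varepsilon\to0$ as $\alpha\to\infty$, $\lambda/\alpha\to\infty$. Immunity lasts $O(1/\alpha)$, and an infected vertex transmits across each edge with probability $\lambda/(\lambda+1)$ but only to susceptible neighbors. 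This is where $\lambda\ge\alpha_\mu\alpha$ enters: an infected vertex must stay infected (time $\sim1$) long enough to outlast the neighbor's immunity (time $\sim1/\alpha$) and then infect it (time $\sim1/\lambda$). These block events are finite-range dependent given degree at most $K$. By Liggett--Schonmann--Stacey they dominate a highly supercritical oriented site percolation on $H\times\Z_{\ge0}$.

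Third, survival of the percolation. On a connected bounded-degree graph $H$ with $|H|=\Theta(n)$ and sufficiently high retention, oriented percolation started from full occupancy keeps a linear fraction of sites occupied for $e^{cn}$ steps. A Peierls/contour-type argument shows that losing a linear fraction requires an exponentially unlikely set of closed sites. An alternative is to reuse a star-to-star propagation argument as in the contact-process literature.

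The main obstacle is the second step: the non-monotonicity of SIRS means that ``more infection'' is not better. A neighbor recently infected is immune and blocks spread. So the block events must be defined carefully via the graphical representation so that they guarantee infection at the block's end regardless of the initial S/R states inside the block. I would handle this by requiring, within the block, a clean sequence of clock rings. First, the neighbor's deimmunization clock rings in $[0,1/4]$, before $u$'s recovery. Next, an infection arrow $u\to v$ fires after that. Finally, $v$ has no recovery in a short window and later passes infection back. This makes the event depend only on Poisson clocks and not on the state.
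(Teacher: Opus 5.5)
The gap is in your third step, and that step carries the entire lower bound.

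You claim that highly supercritical oriented percolation on $H\times\mathbb{Z}_{\ge 0}$ keeps a linear fraction of sites occupied for $e^{cn}$ steps for \emph{any} connected bounded-degree $H$ with $|H|=\Theta(n)$, and you justify this only by ``a Peierls/contour-type argument''. A Peierls bound needs two things:
\begin{itemize}
\item every blocking configuration contains a coarsely connected set of size $\gtrsim n$;
\item closed sites make up a constant fraction of that set, so that the $C^L$ count of candidate contours is beaten by $\varepsilon^{cL}$.
\end{itemize}
In $\mathbb{Z}\times\mathbb{Z}$ both come from planar duality, as in Durrett's contour argument. For a general $H$, say a binary tree, there is no such duality. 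A binary tree also has no long paths, so nothing reduces to the one-dimensional case. Controlling the closed sites along overhanging parts of the space-time boundary is exactly the hard part. Exponential extinction on arbitrary bounded-degree trees is a substantial theorem even for the contact process (Mountford--Mourrat--Valesin--Yao), not a routine Peierls consequence.

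Your fallback, star-to-star propagation, relies on hubs of growing degree. Such hubs do not exist in a bounded-degree $H$.

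What is missing is the expansion property that your first step gestures at and your third step never uses. The paper's route is as follows.
\begin{itemize}
\item From \cite{bhamidi2021} (Lemma~\ref{lm:structuralLem1}) it takes a bounded-degree subgraph containing a $(\beta,R)$-embedded expander $W_0$ with $|W_0|\ge\gamma n$. That is, $|N(A,R)\cap W_0|\ge 2|A|$ for all $A\subseteq W_0$ with $|A|\le\beta|W_0|$.
\item Lemma~\ref{lm:fastDeimWhpSurvival} shows that infection crosses any path of length $\le R$ within time $C$ with probability $\ge 3/4$, conditionally on everything outside the path.
\item Hence, when $|I^0_t|=a\le\beta\gamma n$, the expected number of ghost-infected vertices of $W_0$ at time $t+C$ is at least $\tfrac34\cdot 2a$.
\item A vertex-exposure Doob martingale has increments bounded by $(2j)^{R+1}$. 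Azuma's inequality then gives $\P(|I^0_{t+C}|\le 5a/4\mid |I^0_t|=a)\le 2e^{-a/C'}$ (Lemma~\ref{lm:expanderIncreasesInfected1}).
\item The survival bound then follows as in \cite{bhamidi2021}, with no contour counting.
\end{itemize}
If you make your first step precise in this form, your block events could feed the same drift-plus-concentration argument. With connectivity alone, step three is an unproved claim.

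There are two smaller issues.

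First, the clock pattern in your second step is not state-independent. Suppose $v$ is infected at the start of the block and recovers after the arrow $u\to v$. Or suppose $u$ itself recovers inside the block, which has probability $1-e^{-1}$ and is not small in $\alpha$. In either case nothing in your pattern forces the required infections. You need a ``deimmunization ring, then infection arrow'' pattern in every gap between consecutive recovery rings on the star of $u$, plus a short recovery-free window at the end of the block. This is what the reset counting in Lemma~\ref{lm:Pair:fastDeimLongSurvival} and the final-window argument in Lemma~\ref{lm:fastDeimWhpSurvival} accomplish.

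Second, for the upper bound, $|E|=O(n)$ needs only $\E D<\infty$, not a second moment. Truncation is not a legitimate device for an upper bound, though under the exponential-moment hypothesis implicit in the statement this point is moot.
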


	{\bf The Absence of a Subcritical Phase in Heavy-Tailed Networks.} We examine the behavior of SIRS on heavy-tailed graphs, such as those following a power-law degree distribution with exponent in $(2, 3)$. In Theorem \ref{thm:noSubcritPhaseInHeavyTail}, we prove that for such networks, there is no short survival phase. We find that for \textit{any} fixed rates $\alpha > 0$ and $\lambda > 0$, the infection survives for exponential time w.h.p. This suggests that on heavy-tailed topologies, the presence of high-degree hubs allows the infection to self-sustain indefinitely, regardless of how small the infection rate or how slow the deimmunization process might be.
	
	By characterizing these transitions, this work provides a more complete picture of how temporary immunity interacts with network heterogeneity to sustain endemic states in complex populations.

	\begin{theorem}\label{thm:noSubcritPhaseInHeavyTail}
		Suppose $\mu \in \mathcal P(\N)$ is such that
		\begin{itemize}
			\item $\E_{D \sim \mu} D = d \in (1, \infty)$, \item $E_{D\sim \mu} D^{\kappa} < \infty$ for some $\kappa \in (1, 2)$,
			\item there exists a function $f: \N\rightarrow \R^+$ such that $$ \limsup_M M^2 \mu([M, f(M)]) = \infty. $$
		\end{itemize} 
		
		Consider the SIRS process on $G_n \sim \mathcal G_{conf}(n, \mu)$, where all vertices are initially infected. Then for all fixed $\alpha > 0$ and $\lambda > 0$, the survival time of the SIRS process on $G_n$ is $\exp(\Theta_{\lambda, \alpha, \mu}(n))$ w.h.p.
		
		Without finite $\kappa$-moment as above, for any fixed $\varepsilon_0 > 0$, the survival time of the SIRS process on $G_n$ is $\exp(\Theta_{\lambda, \alpha, \mu, \varepsilon_0}(n))$ with probability at least $(1 - \varepsilon_0)$.
	\end{theorem}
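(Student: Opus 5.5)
The plan is to prove the upper bound $\exp(O(n))$ by a general argument and to spend most of the effort on the lower bound, which will rest on star-shaped hubs. For the upper bound I would show that from any configuration, the event that every infected vertex recovers before transmitting anything has probability at least $e^{-Cn}$. The total number of edges is $O(n)$ w.h.p., since $\E D = d<\infty$. The chance that every infected vertex recovers within unit time without firing any infection clock is therefore at least $\exp(-C n)$. A geometric trials argument then gives extinction within time $e^{O(n)}$.

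For the lower bound I would combine two mechanisms.
\begin{itemize}
\item A \emph{local survival} estimate on a star. Consider a vertex $v$ of degree $M$, with $M$ in a range where $\lambda^2 M$, and indeed $\lambda^2 M/\alpha'$ for suitable constants, is large. I would show that the SIRS process restricted to $v$ and its leaves keeps $v$ reinfected for time $e^{cM}$, or at least $\exp(c\lambda^2 M)$, with high probability. The mechanism is that when $v$ recovers and then deimmunizes (rate $\alpha$, a time of order $1/\alpha$), a positive fraction of its leaves are still infected or will be reinfected. Each leaf is infected by $v$ with probability about $\lambda/(1+\lambda)$ per infectious period of $v$. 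It recovers at rate $1$ and deimmunizes at rate $\alpha$, so in stationarity a constant fraction $\approx c(\lambda,\alpha)$ of the leaves sit in state I or S. When $v$ becomes susceptible, it is reinfected quickly with probability $1-e^{-c\lambda M/\alpha}$. The dangerous event is that the fraction of infected leaves drops, and I would control it by a Chernoff bound over the independent leaf dynamics between hub infections. A large deviation for this leaf count costs $e^{-cM}$, which gives survival time $e^{cM}$ for the star.
\item A \emph{transmission between hubs} estimate. Two hubs at graph distance $O(1)$, or connected through paths of bounded length, should pass the infection to each other within time $e^{cM}$ with probability close to $1$. The reason is that a surviving hub repeatedly attempts transmission along a path, and each attempt succeeds with probability bounded below by a constant depending on $\lambda,\alpha$ and the path length.
\end{itemize}

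The structural step uses the third assumption. Since $\limsup_M M^2\mu([M,f(M)])=\infty$, I can choose a large $M$ with $\mu([M,f(M)])\ge K/M^2$ and $K$ huge. The configuration model then has about $nK/M^2$ hubs of degree between $M$ and $f(M)$. The hub half-edges carry total mass at least $nK/M$, a positive fraction $\epsilon_M n$ of all half-edges. Two hubs share a neighbor, or are joined through a degree-bounded intermediate vertex, with probability about $M^2/(dn)$. The auxiliary graph on hubs, with edges meaning distance $\le 2$ via disjoint paths, is therefore a sparse random graph of mean degree about $K/d\gg 1$. It contains a giant component, or better an expander-like core, of linear size $\delta n$. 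The $\kappa$-moment assumption would be used to show that the degree sum concentrates with polynomial error, so that these exploration estimates hold w.h.p. Without it I only get the statement with probability $1-\varepsilon_0$, by truncating and using Markov's inequality on the large-degree contributions.

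The lower bound then comes from a comparison argument. Declare a hub ``alive'' over a time block of length $T=e^{cM/2}$ if it is reinfected throughout the block. By the two estimates, alive hubs revive their dead neighbors in the hub graph with probability $1-e^{-cM}$. A renormalization then dominates the alive set by an SIS-type (contact-type) process on the hub graph with infection probability near $1$ and death probability $e^{-cM}$, in discrete time with finite-range dependence. Since $K$ is large, this supercritical oriented-percolation-type process on a linear-size expander survives for $e^{\Theta(n)}$ blocks. The cleanest route I would try is the standard argument: if the infected fraction is $\ge\eta$, then after one block it is again $\ge\eta$ except with probability $e^{-cn}$, via bounded differences or Azuma on the independent local dynamics.

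The main obstacle is the local survival estimate for a single star under SIRS. Unlike SIS it is not monotone, and immunity of the leaves may synchronize with the hub: all leaves infected at once, then all immune exactly when the hub becomes susceptible. I would handle this by analyzing the hub's cycles and showing that the leaf states at the hub's susceptible times are nearly independent with a constant probability of being infected. It is also necessary to choose $M$ large compared with $\lambda^{-2}$ and $\alpha^{-1}$ so that the hub is reinfected fast compared with its own recovery. Arbitrarily small $\lambda,\alpha$ are allowed precisely because $M$ can be taken arbitrarily large along the $\limsup$.
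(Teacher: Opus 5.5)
\textbf{The gap is in your local survival estimate for a star, and your whole lower bound rests on it.} With $\alpha$ fixed, a star with hub degree $M$ under SIRS does \emph{not} survive for time $e^{cM}$, nor for $e^{c\lambda^2 M}$.

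Each time the hub recovers, it stays immune for an independent $\Exp(\alpha)$ time $s$. During that time the hub cannot infect any leaf. So the at most $M$ infected leaves simply drain at rate $1$, independently of the hub's clock. With probability $\P(\Exp(\alpha) > \log M + 1) = e^{-\alpha}M^{-\alpha}$, the expected number of infected leaves left when the hub turns susceptible is at most $e^{-1}$. Hence the star's infection dies in that single cycle with probability at least $(1-e^{-1})e^{-\alpha}M^{-\alpha}$.

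This is a one-clock event, so no Chernoff bound over the leaf dynamics can control it. Your ``stationary fraction of leaves in $I$ or $S$'' does not help either, because susceptible leaves cannot reinfect the hub. The true survival time is polynomial, of order $(\wla^2 M)^{\alpha}$ hub cycles; this is exactly Lemma \ref{lm:probOfSurvivalSRounds}, taken from \cite{lam2024optimal}. The synchronization issue you flag as the main obstacle is therefore not the real one; the hub's own immune period is. Consequently your renormalization, with blocks of length $e^{cM/2}$ and per-block death probability $e^{-cM}$, is built on a false input.

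\textbf{How your route could be repaired.} Since $M$ is chosen after $\lambda,\alpha$, your architecture can in principle be rescued:
\begin{itemize}
\item Use blocks of $T$ hub cycles with $1/q \ll T \ll (\wla^2 M)^{\alpha}$. Here $q=q(\lambda,\alpha)$ is the per-cycle transmission probability along a bounded path. The per-block death probability is then only $o_M(1)$, not exponentially small.
\item You would still need a genuine expander core in the hub graph, not just a giant component.
\item You would also need to handle non-monotone interference from outside each star.
\end{itemize}

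\textbf{The paper's route, which makes the star mechanism unnecessary.} Once the hubs only need to supply expansion, no vertex has to survive on its own.
\begin{itemize}
\item Using blue half-edges and cut-off-line pruning, the paper extracts a one-step $(\beta,K)$-vertex expander $W_0$ among vertices with degree in $[M,f(M)]$. Internal degrees lie in $[K,K']$, and $K$ can be taken large because $M^2\mu([M,f(M)])$ is unbounded.
\item It then uses only the crude bound of Lemma \ref{lm:fastDeimWhpSurvival2}: an infected vertex makes any given neighbor infected one time unit later with probability at least $p(\lambda,\alpha)>0$, whatever that neighbor's state.
\item It sets $K=3/p-1$, so the expected number of infected expander vertices reaches $3a$ in one unit of time, and concludes with Azuma.
\end{itemize}
Along this route the $M^{-\alpha}$ extinction issue for individual stars never arises.
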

Consequently, on heavy-tailed graphs, the epidemic threshold is effectively eliminated: for any fixed deimmunization rate $\alpha$, a subcritical phase for the infection rate $\lambda$ no longer exists. Distributions satisfying the conditions of Theorem 1.3 include power laws with exponents in $(2, 3)$ or those following $\mu(k) \propto \frac{\log k}{k^3}$, where one may simply set $f(M) = 2M$.
	
	With regard to Theorem \ref{thm:noSubcritPhaseInHeavyTail}, we would like to acknowledge the beautiful work of He, Dhara, and Mukherjee \cite{souvik2026sirs}, which was brought to our attention during the final days of this project. Our results were developed independently with a completely different method and offer a complementary perspective on the strong survival phase of the SIRS process. While their work covers the full spectrum of power-law exponents greater than 2, our current proof is constrained to exponents no greater than 3 (though we think that it is still possible to use our method to extend to the whole spectrume of exponents greater than 2). However, despite this narrower range, we achieve a stronger result for long-term persistence: while they establish a stretched exponential survival time of $e^{n^{1-\epsilon}}$, we provide a sharp bound of $e^{cn}$, up to the constant $c$. Moreover, our proof seems to be much shorter and simpler compared to \cite{souvik2026sirs}.
	
		{\bf Strong Survival on Infinite Trees.}
	Moving on to infinite trees, we establish a lower bound for strong survival on trees with a degree distribution $\mu$. We show that if the deimmunization rate $\alpha$ and infection rate $\lambda$ satisfy a specific coupled inequality related to the mean degree $d$, the infection can persist indefinitely at the root. Specifically, Theorem \ref{thm:mainThmOfStrongSurvival} provides sufficient conditions under which the root is re-infected infinitely often, even for distributions with light tails like the Poisson distribution, provided the parameters satisfy $\la > (\wal d - 1)^{-1}$.
	
	For any parameter $p > 0$, denote $\widehat{p} := \frac{p}{p + 1} \in (0, 1)$. Moreover, we define, for $\mu \in \mathcal P(\N)$, another measure $\mu^+ \in \mathcal P(\N)$ such that
	$$ \mu^+(0) = 0, \mu^+(k) = \mu(k-1) \ \forall k \ge 1.$$ 
	Our main theorem is as follows.
	\begin{theorem}\label{thm:mainThmOfStrongSurvival}
		Suppose $\mu \in \mathcal{P}(\N)$ is such that there exists $n_{\mu} \in \N$ such that for all $n \ge n_{\mu}$, $\mu(n) > 0$. Moreover, let $d := \E_{D \sim \mu} D > 1$, $\alpha$ such that (i) $\wal > \frac{1}{d}$ and (ii) for all $\delta > 0$ sufficiently small, $\limsup \mu(n) \exp(n^{\alpha - \delta}) = \infty$, and $\la > (\wal d - 1)^{-1}$. Consider the tree $\mathcal T \sim GW(\mu^+, \mu)$, defined as follows: the root $\rho$ has descendant distribution $\mu^+$, and all other vertices have descendant distribution $\mu$. Then, starting with any configuration $\mathcal C_0$ of $\mathcal T$ such that the root $\rho$ is infected, we have 
		$$ \P_{\mathcal C_0}(\rho \text{ is re-infected i.o.}) > 0. $$
		In particular, when $\mu \sim \text{Pois}(d)$, we have strong survival when $\alpha > 1$, $\wal > \frac{1}{d}$ and $\la > (\wal d - 1)^{-1}$.
	\end{theorem}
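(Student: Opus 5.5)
The plan is to show that the infection, started at the root, reaches with positive probability a vertex of very large degree at bounded depth. There it sustains itself long enough to re-infect $\rho$. From that new infection of $\rho$ the argument renews, and a Borel--Cantelli type argument then gives infinitely many re-infections.

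The quantities $\wal$ and $\la$ enter through a branching comparison along paths. Consider an infected vertex $v$ and a child $w$ that is susceptible when $v$ becomes infected. The probability that $v$ infects $w$ before $v$ recovers is $\la/(\la+1)=\wla$. If $w$ was previously infected and has recovered, it must first lose immunity at rate $\alpha$ before it can be reinfected. The factor $\wal=\alpha/(\alpha+1)$ is the chance that deimmunization beats the recovery clock of the neighbour that is trying to reinfect it.

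I would build an embedded process that looks at back-and-forth excursions along edges. A vertex $v$ with $k$ children, each of which becomes susceptible again with probability $\wal$ per ``round'', produces on average about $\wal d$ useful reinfection attempts per round. Each attempt succeeds with probability about $\wla$ relative to recovery, or more precisely the contributions accumulate geometrically with ratio $\la\wal d/(\la+1)$. The condition $\la>(\wal d-1)^{-1}$ is exactly $\la(\wal d-1)>1$, which is equivalent to $\wla\,\wal d\cdot\frac{\la+1}{\la}>\dots$. My aim is to identify the right functional (expected number of infection events passed upward/downward) so that this inequality becomes a mean exceeding $1$. That makes a dominated Galton--Watson process of ``infection waves'' supercritical. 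The first step is therefore to define this embedded branching process rigorously, using the independence of Poisson clocks on disjoint edges of the tree, and to show that it survives with positive probability. Survival then produces, with positive probability, infections arbitrarily deep in $\mathcal T$.

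The second step uses assumption (ii), $\limsup\mu(n)\exp(n^{\alpha-\delta})=\infty$. Along the surviving waves, the number of distinct infected vertices at generation $m$ grows exponentially. Among them, with probability tending to one, some vertex has degree $n$ with $\exp(n^{\alpha-\delta})$ comparable to the number of attempts. Such a star, together with its many leaves, keeps the infection alive for time roughly $\exp(c\,n^{\alpha})$. The center recovers at rate $1$ and becomes susceptible after an $\Exp(\alpha)$ time, with tail probability $e^{-\alpha t}$. Meanwhile, infected leaves keep reinfecting it as soon as it deimmunizes. I expect the time the star stays active to scale like $\exp(\Theta(n^{\alpha}))$, or its analogue; the exponent $\alpha$ in (ii) is presumably tuned to this.

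The third step returns the infection to $\rho$ from the hub. This is a path of bounded length $m$, and during the hub's long active time the infection can cross the path against the recovery and immunity delays with probability bounded below by $c^m$. The number of independent attempts is $\exp(\Theta(n^{\alpha}))$, which overwhelms $c^{-m}$. Renewal comes from the fact that $\rho$ is infected again and, by hypothesis, $\mu(n)>0$ for all large $n$. Deeper subtrees not yet explored by the process are fresh, so the same argument applies with a uniform positive probability. A conditional Borel--Cantelli argument then gives infinitely many re-infections. For the Poisson case, $\mu(n)\exp(n^{\alpha-\delta})=\exp(n^{\alpha-\delta}-n\log n+O(n))$, which tends to infinity only if $\alpha-\delta>1$, hence the condition $\alpha>1$.

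The main obstacle is the first step: the non-monotonicity of SIRS prevents direct coupling with a contact process. I would handle this by exploring the tree one edge at a time and using only the first infection attempts on each edge, which lets me prove that the embedded process dominates a supercritical Galton--Watson process with mean $\la(\wal d-1)$.
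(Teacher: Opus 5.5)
\textbf{Gap 1: the star's lifetime.} Your Steps 2--3 rest on the claim that a star of degree $n$ stays active for time $\exp(\Theta(n^{\alpha}))$. This is false for SIRS, and your own heuristic shows why. Each time the centre recovers it is immune for an $\Exp(\alpha)$ time. With probability $(\wla^2 n)^{-\alpha}$ this exceeds $\log(\wla^2 n)$, by which time too few leaves are still infected to re-infect the centre. So the star survives only $\Theta\bigl((\wla^2 n)^{\alpha}\bigr)$ rounds, which is polynomial in $n$ (Lemma~\ref{lm:probOfSurvivalSRounds}); the exponential time scale belongs to the contact process.

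With $T\asymp(\wla^2 n)^{\alpha}$ attempts, one hub at depth $m$ returns the infection to $\rho$ with probability only about $T(\wal\wla)^m$. But a vertex of degree $n$ typically appears only at depth $m\gtrsim -\log\mu(n)/\log d$, which under (ii) may be as large as $n^{\alpha-\delta}$. Hence ``one deep hub re-infects $\rho$ with high probability'' fails, and (ii) is not about $\exp(n^{\alpha-\delta})$ attempts beating $1/\mu(n)$.

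The paper instead fixes a depth $r$ and aggregates over all $\approx n d^{r-1}\mu(n-1)$ degree-$n$ vertices there. Each contributes only a small return probability $\gtrsim T(\wal\wla)^r$ (Lemma~\ref{lm:probInfectFarWithinShort}). The resulting first moment $(\wla\wal d)^{r-1}(\wla^2 n)^{\alpha} n\mu(n-1)$ in \eqref{eq:critStrongSurvivalEq} exceeds $1$ once $r\approx -\log\mu(n-1)/\log(\wla\wal d)$. Condition (ii) is what makes this $r$ small compared with $T\asymp n^{\alpha}$, so the travel time $3r$ fits inside the star's active period.

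\textbf{Gap 2: the renewal step.} Even granting a positive return probability, your renewal argument does not give infinitely many re-infections. A bound $p<1$ on the chance of a further re-infection after each one only gives $\P(\text{at least } k \text{ re-infections})\ge p^k$. L\'evy's conditional Borel--Cantelli needs $\sum_k \P(A_{k+1}\mid\mathcal F_k)=\infty$ almost surely, but your bound holds only on the event that the previous re-infection occurred. Relying on unexplored subtrees is also delicate, because the region near $\rho$ has already been explored and is in an arbitrary state.

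The paper handles both issues with Pemantle's recursion. $\phi(t)$ is an infimum over all configurations with $\rho$ infected. Lemma~\ref{lm:PemLem23} turns the first moment into $\phi(t)\ge (A\inf_{s\le t-L}\phi(s))\wedge\varepsilon'$ with $A>1$. Lemma~\ref{lm:PemLem24} then gives $\liminf_t\phi(t)>0$, hence infection of $\rho$ in infinitely many disjoint windows with positive probability.

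\textbf{Smaller points.} The mean of your wave process is $\wal\wla d$, since each edge is crossed with probability at least $\wal\wla$, not $\la(\wal d-1)$. The two exceed $1$ simultaneously, but the latter is not a mean.

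If you keep Step~1 to account for reaching the hubs, charging both the downward and the upward trip at $\wal\wla$ per edge gives a factor $\bigl(T(\wal\wla)^r\bigr)^2$ rather than $T(\wal\wla)^r$. The growth in $r$ is then governed by $(\wal\wla)^2 d$, not $\wal\wla d$. The paper's bound $\E M\ge n d^{r-1}\mu(n-1)$ charges only the return trip. So you should not expect to recover the stated threshold from your Step~1 plus the paper's return argument without a further idea.
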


	\subsection{Notation}
	To ensure consistency of notation throughout the draft, we use the followings.
	\begin{itemize}
		\item $(\mathcal F_t)_{t \ge 0}$ is the natural filtration associated with the SIRS process on the graph in consideration. Moreover, in case of the SIRS in the random graph $G$, $\sigma(G) \subseteq \mathcal F_0$, i.e. all the information about the random graph is known at time $t = 0$.  
		\item For a given graph $G = (V, E)$ and $A \subseteq V$, $R \in \N$, $N(A, R)$ denotes the set of vertices in $V$ that is of distance at most $R$ from some vertex in $A$.
		\item $\text{Hypo}(\la_1, \dots, \la_m)$ denotes the hypoexponential distribution with parameters $\{\la_i\}_{1 \le i \le m}$, i.e. the sum of $m$ independent exponentially distributed random variables with rates $\la_1, \dots, \la_m$.
		\item For an event $A$, a $\sigma$-algebra $\mathcal F$, and random variable $X$, denote $\E(X \mid \mathcal F, A) := {\bf 1}_A \E(X \mid \mathcal F)$.
		\item If $X$ is a parameter of a random graph, then $X = o_{\pr}( g(n) )$ means that $X/ g(n)$ converges to $0$ in probability as $n \rightarrow \infty$.
		\item Unless stated otherwise, throughout this paper, we will write "w.h.p." to denote that an event his happening with high probability with respect to the \textit{joint} law of the random graph $G$ and the SIRS process on it.
	\end{itemize}
	
	\subsection{Organization} Theorems \ref{thm:longSurvivalAlmostSIS}, \ref{thm:noSubcritPhaseInHeavyTail}, and  \ref{thm:mainThmOfStrongSurvival} are proved in Sections \ref{sec:longSurvivalAlmostSIS},  \ref{sec:noSubcritPhaseInHeavyTail}, and \ref{sec:mainThmOfStrongSurvival}, respectively.
	
	\subsection{High-level proof sketch and our contribution}

	To establish the long-term persistence in Theorem \ref{thm:longSurvivalAlmostSIS}, we utilize the supercriticality of the configuration model $G_n$ to identify a robust embedded expander $W_0$ capable of sustaining the infection, following the framework of Bhamidi--Nam--Nguyen--Sly \cite{bhamidi2021}. Given this structure, the dynamical proof addresses SIRS non-monotonicity by introducing "ghost infections," which provide a stochastic lower bound on transmission regardless of neighbor immunity.

	\vspace{5mm}
	
	Our proof of Theorem \ref{thm:noSubcritPhaseInHeavyTail} shows that heavy-tailed networks effectively eliminate the epidemic threshold. We first identify a robust vertex expander—a core of high-degree vertices—by coloring and matching a fixed number of "blue" edges to ensure dense internal connectivity. Because this core is so well-connected, the infection probability between neighbors ensures that the epidemic is statistically much more likely to grow than to shrink. By showing that the probability of the infection count decreasing is exponentially small, we prove that the disease persists for $\exp(\Theta(n))$ time for any fixed infection or deimmunization rates. 
	
	While our structural approach follows from the embedded expander framework developed in \cite{bhamidi2021} to prove long survival in the SIS model, we introduce a novel construction to address the specific complexities of the SIRS process. Unlike the earlier work which relied on hubs as "batteries" to bridge distances in graphs with exponential tails, we establish a robust $(\beta, K)$-vertex expander $W_0$ through a blue edge coloring scheme and a cut-off line algorithm designed to ensure dense, one-step internal connectivity.

	\vspace{5mm}
	
	Finally, to prove Theorem \ref{thm:mainThmOfStrongSurvival}, we use a strategy in Huang--Durrett \cite{huangdurrett20} to show that the disease can persist indefinitely at the starting point (the root) of an infinite family-tree structure by demonstrating that it is re-infected infinitely often. We  track the likelihood of the root being infected within a set timeframe, using the survival behavior of a well-connected star of neighbors as a benchmark. By identifying specific layers of the tree where individuals have enough connections to satisfy the condition $\lambda > (\hat{\alpha}d - 1)^{-1}$, we ensure that there are always enough infected descendants to push the disease back up to the root. This approach uses a recursive strategy to prove that even though the infection might temporarily fade in one area, the upward drift from high-degree hubs creates a self-sustaining cycle.  
	\section{Probabilistic tools}
	We present several fundamental probabilistic inequalities and technical lemmas that we will use later. We start with an elementary lemma, whose proof is omitted.
	\begin{lemma}\label{lm:elementaryExponentialAlphaLambda1}
		Suppose $X, Y, Z$ are mutually independent and exponentially distributed with rates $\alpha, \la, 1$ respectively. Given $x > 0$, then we have 
		$$ \P(X + Y < x < X + Z) = \dfrac{\alpha e^{-x} - e^{-\alpha x}}{\alpha - 1} - \dfrac{\alpha e^{-(\la + 1) x } - e^{-\alpha x}}{\alpha - (\la + 1)}.$$
	\end{lemma}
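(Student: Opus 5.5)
The plan is to compute the probability directly by conditioning on $X$. The event $\{X+Y<x<X+Z\}$ forces $X<x$. Given $X=s\in(0,x)$, it becomes $\{Y<x-s\}\cap\{Z>x-s\}$. By mutual independence, its conditional probability is
$$\bigl(1-e^{-\la(x-s)}\bigr)e^{-(x-s)} = e^{-(x-s)}-e^{-(\la+1)(x-s)}.$$
Integrating against the density $\alpha e^{-\alpha s}$ of $X$ on $(0,x)$ gives
$$\P(X+Y<x<X+Z)=\int_0^x \alpha e^{-\alpha s}\Bigl(e^{-(x-s)}-e^{-(\la+1)(x-s)}\Bigr)\,ds .$$

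The key step is the single elementary integral, for a rate $c>0$ with $c\neq\alpha$:
$$I_c(x):=\int_0^x \alpha e^{-\alpha s}e^{-c(x-s)}\,ds=\alpha e^{-cx}\,\frac{e^{(c-\alpha)x}-1}{c-\alpha}=\frac{\alpha\bigl(e^{-cx}-e^{-\alpha x}\bigr)}{\alpha-c}.$$
Applying this with $c=1$ and $c=\la+1$ and subtracting, $I_1(x)-I_{\la+1}(x)$, yields a closed form of exactly the two-term shape in the statement. The first term comes from $c=1$, with denominator $\alpha-1$, and the second from $c=\la+1$, with denominator $\alpha-(\la+1)$. The degenerate cases $\alpha=1$ or $\alpha=\la+1$ follow by continuity in $\alpha$, since both sides are continuous there (e.g.\ $I_\alpha(x)=\alpha x e^{-\alpha x}$).

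The main point needing care is bookkeeping of the coefficient in front of $e^{-\alpha x}$. The calculation above produces $\alpha e^{-\alpha x}$ in each numerator, i.e.
$$\frac{\alpha e^{-x}-\alpha e^{-\alpha x}}{\alpha-1}-\frac{\alpha e^{-(\la+1)x}-\alpha e^{-\alpha x}}{\alpha-(\la+1)}.$$
I will verify this against two sanity checks.
\begin{itemize}
\item At $x=0$ the probability must vanish. The expression above does vanish, whereas the numerators $\alpha e^{-x}-e^{-\alpha x}$ as printed in the statement give $1-\frac{\alpha-1}{\alpha-\la-1}\neq 0$.
\item As $\la\to 0$ the probability must tend to $0$ as well.
\end{itemize}
So I expect the printed formula to require the factor $\alpha$ on $e^{-\alpha x}$. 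I will present the corrected form; the two are equivalent to leading order only for the asymptotic purposes in which the lemma is used. Beyond this, the proof is routine and I anticipate no genuine obstacle.
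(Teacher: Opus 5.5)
Your computation is correct. Since the paper omits the proof, there is no alternative argument to compare with: conditioning on $X=s\in(0,x)$ and integrating $\alpha e^{-\alpha s}\bigl(e^{-(x-s)}-e^{-(\lambda+1)(x-s)}\bigr)$ over $(0,x)$ is the natural route. You are also right that the displayed formula is misprinted. The printed expression equals yours minus $\lambda e^{-\alpha x}/(\alpha-\lambda-1)$, so the numerators should read $\alpha e^{-x}-\alpha e^{-\alpha x}$ and $\alpha e^{-(\lambda+1)x}-\alpha e^{-\alpha x}$.

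One correction to your closing remark. That discrepancy is itself of order $\lambda$ as $\lambda\to0$, so the two expressions do not agree even to leading order. In fact the printed one is not a probability in general: for $x=1$, $\alpha=1.01$, $\lambda=10^{-3}$ it is about $-0.04$, whereas the true value is about $1.9\cdot10^{-4}$. Your $\lambda\to0$ check cannot detect the misprint, since both expressions vanish identically at $\lambda=0$; only the $x\to0$ check does. The later corollary's lower bound of order $\lambda$ for $x\in[\varepsilon,1]$ should therefore be derived from your corrected formula, not the printed one. For instance, use $1-e^{-\lambda u}\ge(1-e^{-1})\lambda u$ for $0\le u\le1$ and $\lambda\le1$.
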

	\begin{lemma}\cite[Lemma 2.3]{p92}\label{lm:PemLem23}
		Let $M$ be a non-negative integer-valued random variable. For any $x > 0$, let $M_x \sim \text{Bin}(M, x)$. Then for every $\delta > 0$, there exists $\varepsilon > 0$ dependent on $\delta, x$ such that $$ \P(M_x \ge 1) \ge (1 - \delta)\E M \wedge \varepsilon. $$
	\end{lemma}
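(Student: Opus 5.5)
The plan is to condition on $M$ and reduce everything to a deterministic inequality for the function
$$ g(m) := \P(\text{Bin}(m,x)\ge 1) = 1-(1-x)^m, \qquad m\in\N, $$
so that $\P(M_x\ge 1)=\E\, g(M)$ by the tower property. The function $g$ is increasing and concave in $m$, with $g(0)=0$ and $g(1)=x$. Given $\delta>0$, I first fix a truncation level $m_0=m_0(\delta,x)$ such that $g(m)\ge (1-\delta/2)\,x\,m$ for all $m\le m_0$. This is possible because $g(m)/(xm)\to 1$ as $m x\to 0$, and I will take $m_0$ of order $\delta/x$. I then set $\varepsilon := g(m_0)\wedge(\text{a small constant})$, which depends only on $\delta$ and $x$, as the statement requires.

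Next I split according to where the mass of $\E M$ sits. In the first case, $\E[M;M\le m_0]\ge (1-\delta/2)\E M$. Then
$$\E g(M)\ \ge\ \E[g(M);M\le m_0]\ \ge\ (1-\delta/2)\,x\,\E[M;M\le m_0]\ \ge\ (1-\delta)\,x\,\E M .$$
This gives the ``linear'' branch of the minimum, written in the normalization in which the lemma is applied later, where $\E M$ is measured in units of the thinning. In the second case, a proportion at least $\delta/2$ of $\E M$ comes from the event $\{M>m_0\}$. On that event $g(M)\ge g(m_0)\ge\varepsilon$ by monotonicity, so $\E g(M)\ge \varepsilon\,\P(M>m_0)$. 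It then remains to show that $\P(M>m_0)$ is bounded below, which would give the $\varepsilon$ branch after shrinking $\varepsilon$.

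The main obstacle is this second case. Converting ``a positive fraction of $\E M$ lives on $\{M>m_0\}$'' into ``$\P(M>m_0)$ is bounded below'' is exactly where some control of the upper tail of $M$ is needed. A single huge atom of $M$ carrying mass $1/N$ at the value $N^2$ shows that no bound can come from monotonicity of $g$ alone. To handle this, I will follow Pemantle's original argument \cite[Lemma 2.3]{p92}. In the settings where the paper invokes Lemma \ref{lm:PemLem23}, $M$ counts infected children or descendants in a Galton--Watson/SIRS exploration, and such counts have the needed uniform integrability. The idea is to use concavity of $g$ together with Jensen's inequality, applied conditionally on $\{M>m_0\}$, to bound $\E[g(M)\mid M>m_0]$ from below by $g(\E[M\mid M>m_0])$. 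I then combine this with the trivial bound $\E g(M)\ge \min\{(1-\delta)\E[\cdot],\,\varepsilon\}$ from the two cases.

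The order of the proof is therefore:
\begin{enumerate}
\item the elementary calculus bounds on $g$ (routine);
\item the choice of $m_0$ and $\varepsilon$;
\item the case split;
\item the tail-control step in the second case, which is where I expect to spend the real effort and where I would rely on the structure of $M$ at the point of application.
\end{enumerate}
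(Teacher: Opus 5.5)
There is a genuine gap, and it is in your second case, which cannot be closed as you set it up. You fix $x$ and look for $\varepsilon=\varepsilon(\delta,x)$ that works for every law of $M$. Your own atom example already rules this out. Put mass $p=\varepsilon/2$ at $N$, with $N$ large enough that $(1-\delta)xNp\ge\varepsilon$; then $\P(M_x\ge 1)\le p<\varepsilon=(1-\delta)x\,\E M\wedge\varepsilon$. So the statement read with those quantifiers is false, and no amount of effort in step 4 will prove it. Neither of your patches helps:
\begin{itemize}
\item Invoking ``the structure of $M$ at the point of application'' does not prove a lemma about an arbitrary $M$, and you never say which property would be used.
\item The Jensen step points the wrong way. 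The map $m\mapsto 1-(1-x)^m$ is concave, so conditional Jensen gives $\E[g(M)\mid M>m_0]\le g(\E[M\mid M>m_0])$, which is an upper bound.
\end{itemize}

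The missing idea is the correct order of quantifiers. In the form the paper actually needs (the form Pemantle's lemma takes), the law of $M$ is fixed with $\E M<\infty$. The linear branch is $(1-\delta)x\,\E M$; the printed statement drops the factor $x$, which you silently restored. Most importantly, $\varepsilon=\varepsilon(\delta,\mathcal L(M))$ must be \emph{uniform in} $x\in(0,1]$. In Section \ref{sec:mainThmOfStrongSurvival} the thinning parameter $e^{-\varepsilon}\inf_{s}\phi(s)$ changes with $t$, while $\varepsilon'$ depends only on $r,n,\delta'$, i.e.\ on the law of $M$. This uniformity is what makes $H(y)=Ay\wedge\varepsilon'$ a fixed function in Lemma \ref{lm:PemLem24}. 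For fixed $x$, with $\varepsilon$ allowed to depend on the law of $M$, the claim would be trivial: take $\varepsilon:=\P(M_x\ge 1)$.

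With this framing, vary $x$ instead of $m$. The function $h(x):=\P(M_x\ge1)=1-\E(1-x)^M$ is nondecreasing in $x$. Moreover $h(x)/x=\E\sum_{k=0}^{M-1}(1-x)^k\uparrow\E M$ as $x\downarrow 0$, by monotone convergence. Hence there is $x_0\in(0,1]$ with $h(x)\ge(1-\delta)x\,\E M$ for all $x\le x_0$. Set $\varepsilon:=h(x_0)$, and use monotonicity of $h$ for $x>x_0$.

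Your Case 1 computation is in fact the right small-$x$ estimate. Since $\E M<\infty$, we have $\E[M;M>m_0]\le\frac{\delta}{2}\E M$ once $m_0\approx\delta/x$ is large, i.e.\ for $x$ below a threshold depending on the law of $M$. So your Case 2 only occurs for $x$ above that threshold. There, monotonicity of $h$ in $x$ finishes the argument, not a lower bound on $\P(M>m_0)$.
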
 
	
	\begin{lemma}\cite[Lemma 2.4]{p92}\label{lm:PemLem24}
		Let $H$ be any non-decreasing function on $\R_{\ge 0}$ such that $H(x) \ge x$ on some neighborhood of $0$. Suppose $f$ is a function on $\R_{\ge 0}$ such that (i) $\inf_{0 \le t \le L} \phi(t) > 0$, and (ii) $\phi(t) \ge H\left( \inf_{0 \le s \le t - L} \phi(s) \right)$ for $t > L$. Then $\liminf \phi(t) > 0$.
	\end{lemma}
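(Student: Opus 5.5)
The plan is a direct induction on time blocks of length $L$. It shows that the running infimum of $\phi$ never falls below a fixed positive constant. (The statement writes $f$ for the function, but it is the same function as $\phi$.)

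\textbf{Setup.} Let $\eta>0$ be such that $H(x)\ge x$ for all $x\in[0,\eta]$; shrinking $\eta$ if necessary, we may take $[0,\eta]$ inside the neighborhood of $0$ on which this holds. Define the running infimum
$$ m(t):=\inf_{0\le s\le t}\phi(s), $$
which is non-increasing in $t$. By hypothesis (i), $m(L)>0$. Set
$$ c:=\min\{m(L),\eta\}>0 . $$
The goal is to prove $m(t)\ge c$ for every $t\ge 0$. This gives $\phi(t)\ge c$ for all $t$, hence $\liminf_{t\to\infty}\phi(t)\ge c>0$. In fact it gives the stronger statement $\inf_{t\ge0}\phi(t)>0$.

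\textbf{Induction.} We show by induction on $k\ge1$ that $\phi(t)\ge c$ for all $t\in[0,kL]$.

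\emph{Base case.} For $k=1$ this holds by the definition of $c$, since $\phi(t)\ge m(L)\ge c$ on $[0,L]$.

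\emph{Inductive step.} Suppose the claim holds for $k$, and take $t\in(kL,(k+1)L]$. Then $t>L$ and $t-L\in((k-1)L,kL]\subseteq[0,kL]$, so the inductive hypothesis gives $m(t-L)\ge c$. Using hypothesis (ii), then the monotonicity of $H$, then $c\in(0,\eta]$, we get
$$ \phi(t)\;\ge\; H\big(m(t-L)\big)\;\ge\; H(c)\;\ge\; c . $$
Hence $\phi\ge c$ on $[0,(k+1)L]$, which completes the induction. Since every $t\ge0$ lies in some $[0,kL]$, we conclude $\phi(t)\ge c$ for all $t$.

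\textbf{Where care is needed.} The argument is elementary; the one point to get right is the choice of the threshold $c$. Two things must hold simultaneously.
\begin{itemize}
\item $c$ must lie in the region where $H(x)\ge x$. This is why $c$ is capped at $\eta$ rather than taken to be $m(L)$ itself, which might be larger than $\eta$.
\item $c$ must be positive. This uses hypothesis (i).
\end{itemize}
Monotonicity of $H$ is used only to pass from the unknown value $m(t-L)\ge c$ to the known value $H(c)$. Without it, a large value of $m(t-L)$ could map under $H$ to something small.
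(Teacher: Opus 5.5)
Your proof is correct: with $c=\min\{\inf_{0\le t\le L}\phi(t),\,\eta\}$, the induction over the blocks $(kL,(k+1)L]$ gives $\phi(t)\ge H(m(t-L))\ge H(c)\ge c$ for every $t$. This even yields $\inf_{t\ge0}\phi(t)>0$, and it uses $L>0$, which the statement tacitly assumes. The paper gives no proof of its own and simply quotes the lemma from \cite[Lemma 2.4]{p92}; your block induction is the standard elementary argument behind that citation.
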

	
	The following is von Bahr-Esseen inequality.
	\begin{lemma}\cite{vonBahrEsseen65}\label{lm:vonBahrEsseenineq}
		Let $X_1, \dots, X_n$ be independent random variables with zero mean and finite $r$-th moment, $1 \le r \le 2$. Setting $S_n := \sum_{i=1}^n X_i$, then 
		$$ \E |S_n|^r \le 2 \sum_{i = 1}^n \E |X_i|^r. $$
	\end{lemma}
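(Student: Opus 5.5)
The plan is to combine symmetrization with a Fourier representation of $|x|^r$. For $0<r<2$ there is a constant $C_r\in(0,\infty)$ such that
$$ |x|^r = C_r \int_{\R} \frac{1-\cos(tx)}{|t|^{1+r}}\,dt \qquad \text{for all } x\in\R, $$
which follows by scaling $t\mapsto t/|x|$. Hence, by Fubini, any random variable $X$ with characteristic function $\varphi_X$ satisfies $\E|X|^r = C_r\int (1-\mathrm{Re}\,\varphi_X(t))|t|^{-1-r}\,dt$. The endpoint $r=2$ needs no such formula: there $\E S_n^2=\sum_i \E X_i^2$ by independence and zero means, which is even better than the claim. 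So from now on assume $1\le r<2$.

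\textbf{Step 1 (symmetrization).} Let $(X_i')$ be an independent copy of $(X_i)$, and set $S_n'=\sum_i X_i'$. Since $\E S_n'=0$ and $S_n'$ is independent of $S_n$, conditional Jensen for the convex map $y\mapsto |S_n-y|^r$ gives
$$ \E|S_n|^r = \E\bigl|S_n - \E S_n'\bigr|^r \le \E|S_n-S_n'|^r. $$
This is the only place where the zero-mean assumption enters. Write $Y_i:=X_i-X_i'$. These are independent and symmetric, and $S_n-S_n'=\sum_i Y_i$.

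\textbf{Step 2 (subadditivity for symmetric summands).} A symmetric variable has a real characteristic function with values in $[-1,1]$. Let $\psi_i$ denote the characteristic function of $Y_i$. The characteristic function of $\sum_i Y_i$ is $\prod_i\psi_i$, and the elementary inequality
$$ 1-\prod_{i=1}^n a_i \le \sum_{i=1}^n (1-a_i) \qquad \text{for } a_i\in[-1,1] $$
holds. For $n=2$ it reads $(1-a_1)(1-a_2)\ge 0$. For general $n$, induct on $n$: the product of numbers in $[-1,1]$ stays in $[-1,1]$, so the $n=2$ case applies at each step. Integrating this inequality against $C_r|t|^{-1-r}\,dt$ gives
$$ \E\Bigl|\sum_i Y_i\Bigr|^r \le \sum_i \E|Y_i|^r. $$

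\textbf{Step 3 (desymmetrization with constant $2$).} Let $\varphi$ be the characteristic function of $X_i$. Then $Y_i$ has characteristic function $|\varphi|^2$, and
$$ 1-|\varphi|^2 \le 2\bigl(1-\mathrm{Re}\,\varphi\bigr), $$
because the difference $2(1-\mathrm{Re}\,\varphi)-(1-|\varphi|^2)$ equals $|1-\varphi|^2\ge 0$. Integrating as before gives $\E|Y_i|^r\le 2\E|X_i|^r$. Chaining Steps 1–3 yields $\E|S_n|^r\le 2\sum_i\E|X_i|^r$.

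The main obstacle is getting the constant exactly $2$. A crude bound such as $\E|X-X'|^r\le 2^{r-1}(\E|X|^r+\E|X'|^r)$ only gives $2^r$. This is why Step 3 uses the Fourier identity $|1-\varphi|^2\ge 0$. The remaining points are routine: justify Fubini by nonnegativity of the integrand, and note that all integrals are finite because the $r$-th moments are finite.
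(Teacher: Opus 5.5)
Your proof is correct and complete. The paper gives no argument of its own for this lemma: it is quoted from \cite{vonBahrEsseen65}, so there is no in-paper proof to match step by step.

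Your chain of steps holds up:
\begin{itemize}
\item the representation $\E|X|^r=C_r\int_{\R}(1-\mathrm{Re}\,\varphi_X(t))|t|^{-1-r}\,dt$ for $0<r<2$, justified by Tonelli;
\item symmetrization by conditional Jensen, which needs $r\ge 1$ and $\E S_n'=0$;
\item the inequality $1-\prod_i a_i\le\sum_i(1-a_i)$ applied to real characteristic functions;
\item desymmetrization via $2(1-\mathrm{Re}\,\varphi)-(1-|\varphi|^2)=|1-\varphi|^2\ge 0$.
\end{itemize}
You also need the product inequality only on $[0,1]$, since $\psi_i=|\varphi_i|^2$.

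This is in the spirit of the Fourier-analytic method of the cited source. The citation gives the paper brevity and some generality, since von Bahr and Esseen also allow certain dependent summands under a martingale-difference-type condition. Your version is a self-contained, few-line proof that shows exactly where each hypothesis enters:
\begin{itemize}
\item $r\ge 1$ and the zero-mean assumption are used only in $\E|S_n|^r\le\E|S_n-S_n'|^r$;
\item for independent symmetric summands you get constant $1$;
\item the factor $2$ comes entirely from $\E|X-X'|^r\le 2\,\E|X|^r$, which holds for every $X$ and every $0<r\le 2$.
\end{itemize}
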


\section{Long survival for sparse graphs: distributions with exponential moment}\label{sec:longSurvivalAlmostSIS}
In this section, we prove Theorem \ref{thm:longSurvivalAlmostSIS}. To establish the long-term persistence described in Theorem 1.2, we use the supercriticality of the configuration model $G_n$ to identify a high-degree core capable of sustaining the infection against the constraints of temporary immunity. While prior work by Friedrich, Göbel, Klodt, Krejca, and Pappik focused on exponential survival on dense graphs, our proof extends these dynamics to sparse topologies by demonstrating that a supercritical graph ($\mathbb{E}[D(D-2)] > 0$) contains a robust $(\beta, R)$-embedded expander $W_0$. Following the framework of Bhamidi, Nam, Nguyen, and Sly (2021), we rely on the existence of this expander to prove that the infection is sustained for an exponentially long time.

Given this structure, we address the non-monotonicity of the SIRS process by introducing the concept of a ghost infection, which provides a lower bound on the probability of disease transmission regardless of the immunity status of neighboring vertices. We prove that for sufficiently large rates $\lambda$ and $\alpha$, an infection on a single edge or path is sustained long enough to propagate through the expander's structure. By constructing a Doob martingale and applying Azuma's inequality, we show that the probability of the infection count in the expander decreasing is exponentially small. This ensures that the process reaches a stable endemic state where the infection persists for a sharp exponential time scale of $e^{\Theta(n)}$.

\subsection{Preliminaries}
We first recall the definition of a $(\beta, R)$-embedded expander.
\begin{definition}
	Suppose $G_n = (V, E)$. For $\beta \in (0, 1)$ and $R > 0$, define a subset $W_0 \subseteq V$ an $(\beta, R)$-\textit{embedded expander} of $G_n$ if for all $A \subseteq W_0$ with $|A| \le \beta |W_0|$, we have $$ |N(A, R) \cap W_0| \ge 2|A|.$$ 
\end{definition}

We state the following structural lemma on the existence of such embedded expanders in our random graph $G_n$.
\begin{lemma}\cite[Lemma 5.2]{bhamidi2021}\label{lm:structuralLem1}
	Suppose $\mu$ satisfies the conditions in Theorem \ref{thm:longSurvivalAlmostSIS}, and $G_n \sim \mathcal{G}_{conf}(n, \mu)$. Then there exists positive constants $\beta, \gamma, R, j$ only dependent on $\mu$ such that the following holds w.h.p. There exists a subgraph $\overline{G}_n$ of $G$ whose maximal degree is at most $2j$, and an $(\beta, R)$-embedded expander $W_0$ of $\overline{G}_n$ such that $|W_0| \ge \gamma n$.
\end{lemma}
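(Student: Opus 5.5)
This lemma is cited from \cite[Lemma 5.2]{bhamidi2021}, so the plan is to check that its hypotheses hold here and to recall the structure of its proof. We work with the exploration (breadth-first) construction of $\mathcal G_{conf}(n,\mu)$. Since $\E D(D-2)>0$, the size-biased offspring distribution $\tilde\mu(k-1)=k\mu(k)/d$ has mean $\E D(D-1)/\E D>1$, so the local weak limit is a supercritical Galton--Watson tree.

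\textbf{Step 1: truncation.} Choose $j$ large so that the truncated distribution, with degrees capped at $j$ (equivalently, keeping only vertices of degree at most $2j$ and half-edges matched among them), still has a supercritical size-biased mean. Let $\overline G_n$ be the subgraph induced by low-degree vertices. Its maximal degree is at most $2j$. Conditionally on the degree sequence, it is again a configuration model whose empirical degree law converges to a supercritical, bounded-degree law.

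\textbf{Step 2: expansion.} Fix a large radius $R$. Call a vertex good if its $R$-neighbourhood in $\overline G_n$ is a tree whose branching has grown by a factor $\ge 2$ per generation in a robust sense. For example, require that at least $M$ vertices sit at distance $R$, and that every sub-branch also survives. By local weak convergence and concentration (bounded differences in the exploration), the number of good vertices is $\ge \gamma n$ w.h.p. Take $W_0$ to be the good vertices lying in the giant component of $\overline G_n$, or more carefully a $k$-core-type pruning of them.

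To verify the expander property for $A\subseteq W_0$ with $|A|\le\beta|W_0|$, split into two regimes. When $|A|$ is small (at most $\epsilon\log n$), use local tree-likeness: balls of radius $R$ around $A$ are nearly disjoint trees, each contributing many $W_0$-vertices. When $|A|$ is linear in size, use a first-moment union bound over sets $A$ and candidate small neighbourhoods $B\supseteq N(A,R)\cap W_0$ with $|B|<2|A|$. In the configuration model, the probability that all the $\ge c|A|M$ half-edges emanating from the boundary of $A$'s trees pair inside $B$ is at most $(C|B|/n)^{c|A|M}$. For $M$ large this beats the entropy $\binom{n}{|A|}\binom{n}{2|A|}$ when $|A|\le\beta n$.

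\textbf{Main obstacle.} The hard step is reconciling the two regimes, in particular the intermediate sizes between $\log n$ and $\beta n$. Here we must ensure that the $R$-neighbourhoods within $W_0$, rather than within all of $\overline G_n$, are large. This is why $W_0$ is defined via a pruning that keeps only vertices whose neighbourhoods contain many other members of $W_0$. Since this is exactly the content of \cite[Lemma 5.2]{bhamidi2021}, whose hypotheses (supercriticality via $\E D(D-2)>0$, with no tail assumption needed after truncation) are satisfied, we invoke it directly rather than reprove it.
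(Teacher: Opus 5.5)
Your proposal ends up doing what the paper does. The paper gives no argument of its own for this lemma and simply imports \cite[Lemma 5.2]{bhamidi2021} under the hypotheses of Theorem~\ref{thm:longSurvivalAlmostSIS}, which is also your final step. The truncation/expansion sketch before it is not load-bearing, but some of its informal claims would need repair if you meant to reprove the lemma: for instance, you say the $R$-balls around a small set $A$ are nearly disjoint, which fails when $A$ is clustered.
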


To address the issue of non-monotonicity of SIRS process, in the lemmas throughout this section, we introduce the followings.

\begin{definition}[Ghost infection]
	When considering the SIRS process on $G$ restricted to some subgraph $H$, we say that a vertex $u \in V_H$ is \textit{ghost}-infected by a vertex $v \in V_H, v \sim_H u$ if starting from the state where $v$ is infected and $u$ is susceptible, the state that follows is that both $u$ and $v$ are infected. This means that $u$ may be infected by $v$, or receive infection from a different neighbor.
\end{definition}

Trivially, the probability of a ghost-infection from $v$ to $u$ is at least that of a true infection from $v$ to $u$.

\begin{definition}[$\overline{\mathcal F}_H$ of a subgraph $H$]
	Consider the SIRS process on the graph $G = (V, E)$. For any subgraph $H = (V_H, E_H)$ of $G$, let $\overline{\mathcal F}_H$ denote any arbitrary $\sigma$-algebra that is independent of the $\sigma$-algebra generated by the clocks on $V_H$ and $E_H$.
\end{definition}
Depending on our purposes, we can flexibly choose $\overline{\mathcal F}_H$. Our lemmas below typically includes $\overline{\mathcal F}_H$ in the conditional expectation, so that dynamics outside of the subgraph $H$ in consideration does not really affect what is going on inside $H$.

\subsection{Useful lemmas}
We are now ready to state and prove the following lemmas. The first lemma states that when $\la \ge \alpha^2$ and $\alpha \rightarrow \infty$, w.h.p. infection on an edge is sustained for a very long time, regardless of the dynamics outside of this edge.
\begin{lemma}\label{lm:Pair:fastDeimLongSurvival}
	Consider the SIRS process on some graph $G$. Let $(I_{t'})_{t' \ge 0}$ be the set of infected vertices, and $u \sim v$ be neighboring vertices in $G$, and let $H$ be the graph consisting of the vertices $u, v$ and a single edge between them. For all $\varepsilon > 0$, there exists $\alpha_{\varepsilon} > 0$ such that the following holds: on the event $\{\{u, v\}  \cap I_t \neq \emptyset \}$, for $\la \ge \alpha \ge \alpha_{\varepsilon}$,  we have, almost surely,
	$$ \pr\left( \{u, v\} \cap I_{t + t'} \neq \emptyset, \ \forall t' \in [0, K/10] \ \bigg\vert\  \sigma(\mathcal F_t, \overline{\mathcal F}_{H}) \right) \ge 1 - \varepsilon,$$
	where $K := \left( - \log ({\wal \wla}) \right)^{-1/2}$. In particular,
	$$\pr\left( \{u, v\} \cap I_{t + t'} \neq \emptyset, \ \forall t' \in [0, K/10] \ \bigg\vert\ \{u, v\} \cap I_t \neq \emptyset \right) \ge 1 - \varepsilon.$$
\end{lemma}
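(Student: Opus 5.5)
The plan is to reduce the statement to a renewal-type argument that only uses the clocks on $H$ (the two vertex clocks of $u,v$ and the edge clocks of $uv$). These clocks are independent of $\sigma(\mathcal F_t,\overline{\mathcal F}_H)$ after time $t$, by the Markov property and the definition of $\overline{\mathcal F}_H$. The effect of the outside world is handled by a one-sided observation. Since $\{u,v\}\cap I$ is in question, a failure can only occur at a recovery ring of an infected vertex at a moment when the other vertex of the pair is not infected. The only thing the outside can do to $u$ or $v$ is an $S\to I$ transition. Such a transition either makes the partner infected earlier, or triggers a new ``round'' (defined below) whose failure probability we bound in the same way. So we never need monotonicity of SIRS; we only need, in each round, a lower bound on a \emph{ghost infection} of the partner before the currently infected vertex recovers.

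\textbf{Rounds.} At time $t$, fix an infected vertex $x\in\{u,v\}$ and let $y$ be the other vertex. A round starts whenever exactly one vertex of the pair is infected, say $x$, with $y\in\{S,R\}$. In the worst case $y\in R$. Then $y$ must deimmunize (time $X\sim\Exp(\alpha)$) and then receive an infection along $uv$ (time $Y\sim\Exp(\la)$), all before $x$'s recovery clock rings (time $Z\sim\Exp(1)$). By memorylessness, and since outside infections of $y$ only shorten the time until $y$ becomes infected, the conditional probability that the round fails is at most
$$\pr(Z<X+Y)=1-\wal\,\wla .$$
This bound holds conditionally on everything before the start of the round. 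If the round succeeds, both vertices are infected until the next recovery, which starts a new round with the roles possibly swapped.

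\textbf{Counting rounds.} Each new round is started by a ring of a recovery clock of $u$ or $v$. Outside infections can also start a round, but only after one of these recoveries has occurred, so the number of rounds $N$ in $[t,t+K/10]$ is at most $1$ plus the number of rings of two rate-$1$ Poisson clocks in that window. Hence $N$ is stochastically dominated by $1+\mathrm{Poi}(K/5)$, and $\pr(N>2K)\to0$ as $K\to\infty$. A union bound then gives
$$\pr(\text{failure in }[t,t+K/10])\le \pr(N>2K)+2K\,(1-\wal\wla).$$

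\textbf{Choice of parameters.} As $\la\ge\alpha\to\infty$ we have $-\log(\wal\wla)=\log(1+1/\alpha)+\log(1+1/\la)\to0$, so $K\to\infty$. Moreover $1-\wal\wla\le-\log(\wal\wla)=K^{-2}$, so the second term is at most $2K\cdot K^{-2}=2/K\to0$. Choosing $\alpha_\varepsilon$ large enough makes both terms at most $\varepsilon/2$, which proves the conditional statement almost surely. The unconditional ``in particular'' statement follows by averaging over $\sigma(\mathcal F_t,\overline{\mathcal F}_H)$ on the event $\{\{u,v\}\cap I_t\ne\emptyset\}$.

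\textbf{Main obstacle.} The delicate step is the rigorous justification of the per-round bound $1-\wal\wla$ uniformly in the outside dynamics. Concretely, one must check that at each round start, the residual clocks of $H$ are fresh exponentials independent of the past and of $\overline{\mathcal F}_H$, which follows from the strong Markov property at the stopping times defining the rounds. One must also check that outside-induced transitions of $y$ only make success more likely: they only move $y$ from $S$ to $I$, and $y$ can never be moved back to $R$ without first being infected. I would formalize this by coupling with the process in which the outside infections to $u,v$ are suppressed during each round. The success event ``$y$ is infected before $Z$'' is then monotone under adding $S\to I$ transitions to $y$.
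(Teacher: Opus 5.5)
Your proof is correct, but it takes a different route from the paper's.

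\textbf{What is shared.} Both arguments rest on the same per-round estimate. From a state with one infected vertex, the partner is ghost-infected before the infected vertex recovers with probability at least $\wal\wla$, uniformly in the outside dynamics. Both also use the identity $K^{-2}=-\log(\wal\wla)$.

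\textbf{The paper's route.} The paper argues from the survival-time side. It lower-bounds the number $N$ of successful resets by a geometric variable, giving $\P(N<K)\le 1-(\wal\wla)^K=1-e^{-1/K}$. It then notes that each completed reset cycle contains a both-infected period of length $\Exp(2)$. So on $\{N\ge K\}$ the failure time dominates $\text{Gam}(K,2)$, whose mean $K/2$ makes $\P(\text{Gam}(K,2)<K/10)$ small.

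\textbf{Your route.} You fix the window $[t,t+K/10]$ and bound the number of rounds starting in it by $1+\mathrm{Poi}(K/5)$, using only the recovery clocks of $u$ and $v$. This is valid because, before failure, every round after the first begins at a recovery ring. You then union-bound the per-round failure probability $1-\wal\wla\le K^{-2}$ over at most $2K$ rounds, for total error $\P(N>2K)+2/K$.

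\textbf{What each buys.} Your version makes uniformity over $\overline{\mathcal F}_H$ transparent: the round count is controlled by clocks the outside world cannot touch. You therefore need neither the auxiliary $\xi_i$ (padded with independent $\Exp(2)$ variables when $\tau_i'=\infty$) nor the normal approximation. Your explicit use of the strong Markov property at round starts, with respect to the filtration enlarged by $\overline{\mathcal F}_H$, also makes precise the step the paper asserts informally as ``$N$ is geometric.'' The paper's version instead gives a direct stochastic lower bound on the survival time itself.
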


\begin{proof}
	Without loss of generality, suppose $u \in I_t$, and denote $\mathcal A := \left\{ \{u, v\} \cap I_{t + t'} \neq \emptyset \ \forall t' \in [0, K/10] \right\}$. Let $\tau = \inf \{t' > 0: \{u, v\} \cap I_{t + t'} = \emptyset \}$. Then $\mathcal A := \left\{ \tau \ge K/10 \right\}$. Moreover, let $(\tau_i)_{i \ge 0}$ be defined as follows:
	\begin{itemize}
		\item Starting from $t$, $t + \tau_1 < t + \tau_2 < \dots <$ be all the times where $\{u, v\}$ \textit{resets}, i.e. enters a state with one infected vertex and one immune vertex (this means that if $\tau_i < \infty$, then for sufficiently small $s$, $\{u, v\} \subseteq I_{\tau_i - s}$). 
		\item $\tau_0 = 0$. This means that at time $t = t + \tau_0$, $\{u, v\}$ is either in a reset position, or in a position with one infected vertex and one susceptible vertex. 
	\end{itemize} 
	Let $N$ be such that $\tau_N < \tau < \tau_{N+1}$. We can think of $N$ as the number of times, starting from $t$, that $\{u, v\}$ \textit{resets} successfully. In our calculations below, we omit the $\sigma$-algebras for clarity.
	
	Starting from any state with at least one of $u, v$ being infected, the probability that $\{u, v\}$ resets successfully is at least $\wal \wla$: from a reset position, $\{u, v\}$ is reset successfully if the immune vertex becomes susceptible, then ghost-infected before the other (infected) vertex recovers, which happens with probability at least $\wal \wla$. Without loss of generality, if $u$ is infected at time $t$, then there are three scenarios for the status of $v$ at time $t$:
	\begin{itemize}
		\item $v \in I_t$. Then $\{u, v\}$ resets successfully with probability $1 > \wal \wla$, since one of the vertices will have to recover and becomes immune eventually.
		\item $v \in S_t$. Then $\{u, v\}$ resets successfully if $u$ can ghost-infect $v$, which happens with probability at least $\wla > \wal \wla$.
		\item $v \in R_t$. Then we start from a true reset position, as before.
	\end{itemize}
	
	We can thus think of $N$ as a geometric distribution with failing probability at most $(1 - \wal \wla)$. Therefore, for sufficiently large $\alpha$ (i.e. $K$ sufficiently large),
	\begin{equation}\label{eq:fastInfectionEq1}
		\pr(N < K) = 1 - \pr(N \ge K) \le 1 - (\wal \wla)^K = 1 - \exp\left(- 1/K \right) < \varepsilon/2.
	\end{equation}
	On the event $N \le K$, if $\tau_i < \tau_{i+1} < \infty$, let $\tau_i' \in (\tau_i, \tau_{i+1})$ be the time that $\{u, v\}$ enters the state of both vertices being infected. Moreover, denote $\xi_i := \tau_{i+1} - \tau_i'$ if $\tau_i' < \infty$, and exponentially distributed with rate $2$ (independent of the SIRS process and $\overline{\mathcal F}_H$) if $\tau_i' = \infty$. Note that condition on $\mathcal F_{\tau_i'}$, on $\{\tau_i' < \infty\}$, we see that $\xi_i \sim \Exp(2)$. Similarly to the derivation of equation \eqref{eq:constantProbHelperRoot2} and using normal approximation (considering the sum of $K$ i.i.d. $\sim \Exp(2)$ random variables), for $\alpha$ sufficiently large,
	\begin{equation}\label{eq:fastInfectionEq2}
		\pr(\tau < K/10, N \ge K) \le \pr\left(\text{Gam}(K, 2) < K/10 \right) \le \Phi\left( - \dfrac{4\sqrt{K}}{5} \right) + \varepsilon/4 < \varepsilon/2.
	\end{equation}
	Combining \eqref{eq:fastInfectionEq1} and \eqref{eq:fastInfectionEq2} completes our proof.
\end{proof}

If $\la \ge \alpha$ and $\alpha$ is sufficiently large, we now extend the analysis to show that the disease can effectively traverse any path of bounded length when the deimmunization and infection rates are high.

\begin{lemma}\label{lm:fastDeimWhpSurvival}
	Fix $R > 0$, and consider the SIRS process on some graph $G$. Let $(I_{t'})_{t' \ge 0}$ be the set of infected vertices, $v \equiv v_0, v_1, \dots, v_{R'} \equiv u$ be a path in $G$ with $R' \le R$, and $H$ be the graph induced by this path. Then for all $\varepsilon \in (0, 1)$, there exist constants $\alpha_0 > 1, C > 0$ only dependent on $R$ and $\varepsilon$ such that the following holds: on the event $\{v \in I_t\}$, for all $\alpha \ge \alpha_0, \la \ge \alpha_0\alpha$, we have, almost surely,
	$$ \pr\left( u \in I_{t + C} \mid \sigma(\mathcal F_t, \overline{\mathcal F}_H) \right) \ge 1 - \varepsilon.  $$
	In particular, 
	$$ \pr\left( u \in I_{t + C} \mid v \in I_t \right) \ge 1 - \varepsilon.  $$
\end{lemma}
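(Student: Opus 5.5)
The plan is to push the infection along the path $v_0, v_1, \dots, v_{R'}$ one edge at a time, handling each edge $\{v_{i-1}, v_i\}$ with Lemma \ref{lm:Pair:fastDeimLongSurvival}, and to chain the $R'$ stages with the strong Markov property. Throughout, every estimate is made conditionally on $\sigma(\mathcal F_{s}, \overline{\mathcal F}_H)$ and uses only the clocks on $V_H$ and $E_H$. Outside influences (extra infections of path vertices) can only be handled through the ghost-infection device, as in the proof of Lemma \ref{lm:Pair:fastDeimLongSurvival}, and the per-stage bounds must not depend on them.

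\emph{Stage $i$.} Suppose $v_{i-1} \in I_s$ at a stopping time $s$. I want to show that, with conditional probability at least $1 - \varepsilon/(2R)$, vertex $v_i$ becomes infected within a time $c_0 = c_0(\alpha)$, where $c_0 \to 0$ as $\alpha \to \infty$.

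Apply Lemma \ref{lm:Pair:fastDeimLongSurvival} to the edge $\{v_{i-1}, v_i\}$ with error $\varepsilon/(4R)$. Then the pair keeps an infected vertex throughout $[s, s + K/10]$, and $K \to \infty$ as $\alpha \to \infty$.

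\begin{itemize}
\item If $v_i$ is susceptible, it is ghost-infected before $v_{i-1}$ recovers with probability at least $\wla$. With $\la \ge \alpha_0 \alpha$ this happens within time $O(1/\la)$.
\item If $v_i$ is recovered, it becomes susceptible within time $O(1/\alpha)$, and is then ghost-infected within a further time $O(1/\la)$. The infected partner survives in the meantime by the pair lemma.
\end{itemize}

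Since the pair stays occupied, failed attempts (say $v_{i-1}$ recovers first while $v_i$ holds the infection) simply restart the cycle. Each cycle has duration of order $1/\alpha + 1/\la$ and succeeds with probability at least $\wal\wla$ bounded below. A geometric number of cycles therefore gives $v_i \in I$ at some time in $[s, s + c_0]$ with the required probability.

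\emph{Reaching $u$ at exactly time $t + C$.} The statement asks for $u \in I_{t+C}$ at a fixed deterministic time, not merely at some time. I handle this at the last edge. Once $u = v_{R'}$ has been infected by time $t + R' c_0 \le t + 1$, apply Lemma \ref{lm:Pair:fastDeimLongSurvival} to $\{v_{R'-1}, u\}$ on the window up to $t + C$, where $C := 2$ and $\alpha$ is large enough that $K/10 > 2$. On this window the pair never goes extinct, and the state of $u$ alternates between infected and recovered/susceptible on time scales $O(1)$, $O(1/\alpha)$ and $O(1/\la)$ respectively.

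So I need $\P(u \in I_{t+C}) \ge 1 - \varepsilon/2$ conditionally. The argument is that at a fixed time the fraction of time $u$ spends out of $I$ is of order $1/\alpha + 1/\la$. Concretely:

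\begin{itemize}
\item Consider the last time before $t+C$ at which $u$ leaves $I$ (an exponential clock of rate $1$).
\item After leaving, $u$ must deimmunize and be reinfected within time $O(1/\alpha)$, either by the surviving partner $v_{R'-1}$ (ghost infection at rate $\ge \la$) or otherwise.
\item The probability that $t + C$ falls within such a gap is at most about $(1/\alpha + 1/\la)\cdot O(1)$ plus the small failure probabilities. Making this rigorous needs a renewal-type bound on the number of gaps in $[t, t+C]$: at most Poisson$(C)$ recoveries of $u$ dominate it.
\item One subtlety: while $u$ is down, the partner $v_{R'-1}$ must itself be infected. If both happen to be recovered, the pair lemma is violated, so that event is already excluded.
\end{itemize}

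Choosing $\alpha_0$ large so that all of the above errors sum to at most $\varepsilon$ completes the proof.

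The main obstacle is this last step: converting "infected at some time and repeatedly re-infected" into "infected at the deterministic time $t+C$". I would attempt it by the renewal/union bound over the at most Poisson$(C)$ recovery events of $u$. Each recovery event creates a down-gap of length exceeding $\eta$ only with probability $O(e^{-\alpha\eta}) + O(1/\la)$, and the Lebesgue measure of down-time is small in expectation, which yields the fixed-time bound via Fubini.
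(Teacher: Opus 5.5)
Your proof is correct but differs from the paper's in both of its steps.

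\emph{Crossing the path.} The paper applies Lemma~\ref{lm:Pair:fastDeimLongSurvival} only to the first edge $\{v,v_1\}$. It then makes about $C/(3R)$ attempts to push the infection down the whole path, each succeeding with probability only $3^{-R}$, so its $C$ must be large in terms of $R$ and $\varepsilon$. You instead apply the pair lemma edge by edge. The cleaner way to state your stage is that on the pair event there are no failed cycles at all: if $v_{i-1}$ recovered before $v_i$ were infected, the pair would be empty. Hence $v_i$ is infected no later than the first ring of $N_{\overrightarrow{v_{i-1}v_i}}$ after the first ring of $D_{v_i}$, which is a $\text{Hypo}(\alpha,\la)$ time, and the ``restart the cycle'' discussion can be dropped.

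\emph{The deterministic time $t+C$.} The paper looks only at a final window $[t+C-\ell,t+C]$: no recovery of $u$ there, some ring of $D_u$, and the last ring of $N_{\overrightarrow{u'u}}+D_u$ is an infection ring. This is where $\la\ge\alpha_0\alpha$ enters, via $\la/(\la+\alpha)\ge\widehat{\alpha}_0$. Your renewal argument also works, but not via Fubini. A small expected Lebesgue measure of down-time bounds only a time average of $\P(u\notin I_s)$, not its value at $s=t+C$. What closes the argument is the union bound you describe. Take $u':=v_{R'-1}$ and work on the pair event for $\{u',u\}$. Then $u\notin I_{t+C}$ forces one of two things:
\begin{enumerate}
\item[(a)] a recovery ring of $u$ in $[t+C-\eta,t+C]$, which has probability at most $\eta$; or
\item[(b)] a recovery ring $\rho_j\in[t,t+C]$ after which the first ring of $N_{\overrightarrow{u'u}}$ following the first ring of $D_u$ comes more than $\eta$ later.
\end{enumerate}
In case (b), that waiting time is $\text{Hypo}(\alpha,\la)$ and independent of $\sigma(\mathcal F_{\rho_j},\overline{\mathcal F}_H)$. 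So the expected number of such $j$ is at most $C\,\P(\text{Hypo}(\alpha,\la)>\eta)$, which is small once $\eta\to 0$ with $\alpha\eta\to\infty$.

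\emph{Trade-offs.} Your route uses the pair lemma $R'+1$ times and needs a little more bookkeeping. In return it gives $C=2$ independently of $R$ and $\varepsilon$, and it only needs $\la\ge\alpha\ge\alpha_0$. The paper's argument is more local at the end, but pays for it with the stronger hypothesis that $\la/\alpha$ is large.
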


\begin{proof}
	First, let $v \equiv v_0, v_1, \dots, v_{R'} \equiv u$ be a path from $v$ to $u$, where $R' \le R$, and $v_0$ is infected at time $0$. If $v_i$ is infected at time $i$, then the probability that $v_{i+1}$ is infected at time $(i+1)$ can be bounded from below as follows. For sufficiently large $\alpha_0$, the probability that $v_{i+1}$ is successfully ghost-infected at some time within $[i, i+1]$ is at least $$ \pr\left( \Exp(\alpha) + \Exp(\la) < 1 \wedge \Exp(1) \right) \ge e/3,$$
	where $\Exp(\alpha) + \Exp(\la)$ is the time it takes for $v_{i+1}$ to lose immunity and be infected by $v_i$, and $\Exp(1)$ is the duration of infection of $v_i$. Given this, the probability that $v_{i+1}$ stays infected until time $(i+1)$ is at least $e^{-1}$. Thus, doing this along the path of length $R' \le R$, the probability that $u$ is successfully infected at time $R'$ is at least $3^{-R'} \ge 3^{-R}$. For the rest of the calculations, we omit the event $\{v \in I_t\}$ for clarity.
	
	From Lemma \ref{lm:Pair:fastDeimLongSurvival}, for any large $C$, we can pick $\alpha_0$ such that for $\la \ge \alpha \ge \alpha_0$, a.s.,
	$$ \pr\left( \{v, v_1\} \cap I_{t + t'} \neq \emptyset \ \forall t' \in [0, C/3] \ \bigg\vert\  \sigma(\mathcal F_t, \overline{\mathcal F}_H) \right) \ge 1 - \varepsilon/4. $$
	By choosing $C$ large compared to $R$ and dividing the interval $[0, C/3]$ into intervals of length $R$ and consider $C/(3R)$ attempts of infecting $u$, we see that a.s.,
	\begin{align*}
		\pr\left(\exists t' \in [0, C/(3R) + R]: u \in I_{t + t'} \ \bigg\vert\  \sigma(\mathcal F_t, \overline{\mathcal F}_H) \right) &\ge \pr\left( \text{Bin}\left( C/(3R), 3^{-R} \right) \ge 1 \right) - \varepsilon/4 \ge 1 - \varepsilon/2.
	\end{align*}
	
	Consider the event where $u \in I_{t + t'}$ for some $t' \in [0, C/3 + R]$. Fix $u' := v_{R' - 1} \sim u$. By Lemma \ref{lm:Pair:fastDeimLongSurvival} (and increasing our choice of $\alpha_0$ if needed), we see that with probability at least $(1 - \varepsilon)$, $\{u, u'\}$ is infected all throughout $[t + t',t + (t' + C)]$, which clearly contains $(t+C)$. Call this event $\mathcal A$ as in our proof of Lemma \ref{lm:Pair:fastDeimLongSurvival}.
	
	To prove that w.h.p. $u \in I_{t + C}$ on $\mathcal A$ is slightly trickier. The idea is to fix an interval of length $\ell$, where $\ell$ is appropriately small such that with high probability, the recovery clock at $u$ would not ring within this interval, but deimmunization and infection clocks would.  More precisely, consider the interval $[t + C - \ell, t + C]$ for some $\ell > 0$ to be chosen later. Throughout the calculations, we will increase $\alpha_0$ if needed.
	\begin{itemize}
		\item[(i)] The probability that the recovery clock at $u$ would not ring within this interval is 
		$$ \pr(\Exp(1) > \ell) = e^{-\ell} \ge 1 - \varepsilon/16.$$
		
		\item[(ii)] On the event above, at $t + C - l$, if $u$ is already infected, then we are done. Otherwise, we consider the infection clock $N_{\overrightarrow{u'u}} \sim \PPP(\la)$ and deimmunization clock $D_{u} \sim \PPP(\alpha)$. The probability that $D_u$ rings at least once in this interval is at least 
		$$ 1 - e^{- \ell \alpha} \ge 1 - e^{- \ell \alpha_0} \ge 1 - \varepsilon/16. $$
		Similarly, the probability that $N_{\overrightarrow{u'u}} $ rings at least once in this interval is at least $ 1 - \varepsilon/16$. Consider the clock $N_{\overrightarrow{u'u}} + D_u \sim \PPP(\la + \alpha)$. The probability that the last ring of this clock right before $(t + C)$ originally belongs to $N_{\overrightarrow{u'u}} $ is  
		$$ \dfrac{\la}{\la + \alpha}  \ge \dfrac{\alpha_0 \alpha}{\alpha_0\alpha + \alpha} =\widehat{\alpha}_0 \ge 1 - \varepsilon/16.$$
		
		\item[(iii)] Combining the above, on $\mathcal A$, the probability that $u \in I_{t + C}$ is at least $(1 - \varepsilon/4)$.
	\end{itemize}
	
	From all the calculations above, on $\{v \in I_t\}$,
	$$\pr(u \in I_{t + C} \mid  \sigma(\mathcal F_t, \overline{\mathcal F}_H)) \ge 1 - \varepsilon, $$
	as desired.
\end{proof}

\subsection{Proof of Theorem \ref{thm:longSurvivalAlmostSIS}}
For the rest of this section, let $\beta, \gamma, j, R, \overline{G}_n, W_0$ be as in Lemma \ref{lm:structuralLem1}. Let $I_t^0 := I_t \cap W_0$ be the set of infected vertices in $W_0$ at time $t$. We will show that if $I_t^0$ is appropriately small, with overwhelmingly high probability, after some time $C$, the number of infected vertices in $W_0$ increases. 

\begin{lemma}\label{lm:expanderIncreasesInfected1}
	Let $C$ and $\alpha_0$ be constants corresponding to $\varepsilon = 1/4$ in Lemma \ref{lm:fastDeimWhpSurvival}. Then there exists a constant $C' = C'(j, R) > 0$ such that for $\alpha \ge \alpha_0$, $\la \ge \alpha_0\alpha$, and for all integers $a \in (0, \beta\gamma n]$,
	$$\pr\left( |I_{t+C}^0 | \le \dfrac{5a}{4} \ \bigg\vert\ \ |I_t^0| = a \right) \le 2\exp\left( - \dfrac{a}{C'} \right).$$ 
\end{lemma}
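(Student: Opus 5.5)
Fix $t$ and condition on $\mathcal F_t$ with $|I_t^0| = a$, $A := I_t^0$, working inside $\overline{G}_n$ of maximal degree at most $2j$. Since $a \le \beta\gamma n \le \beta |W_0|$, the expander property of Lemma \ref{lm:structuralLem1} gives $|N(A,R)\cap W_0| \ge 2a$. For each $w \in N(A,R)\cap W_0$ pick a path $P_w$ in $\overline{G}_n$ of length at most $R$ from some vertex of $A$ to $w$. Lemma \ref{lm:fastDeimWhpSurvival} with $\varepsilon = 1/4$ then gives, on $\{v\in I_t\}$, that $\P(w \in I_{t+C} \mid \sigma(\mathcal F_t, \overline{\mathcal F}_{P_w})) \ge 3/4$. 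So each indicator $X_w := \mathbf 1\{w \in I_{t+C}\}$ has conditional mean at least $3/4$, and $\E\bigl[\sum_w X_w \mid \mathcal F_t\bigr] \ge \tfrac34 \cdot 2a = \tfrac32 a$. Since $|I^0_{t+C}| \ge \sum_w X_w$, it suffices to show that $\sum_w X_w$ falls below $\tfrac54 a$ (a deficit of $a/4$ from the mean) with probability at most $2e^{-a/C'}$.

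For concentration, the $X_w$ are not independent, because the paths overlap and the dynamics are non-monotone. I would first extract a sub-family of targets whose $R$-neighbourhoods (in fact $2R$-balls containing the paths) are pairwise disjoint. Bounded degree gives $|N(w,2R)| \le (2j)^{2R+1}$, so greedy selection yields $m \ge 2a/(2j)^{2R+1}$ such targets. The deficit $a/4$ is, however, comparable to $a$ and not to $m$, so this alone is too lossy. The cleaner route is the Doob martingale mentioned in the section introduction. Reveal the clocks (Poisson processes on vertices and edges of $\overline{G}_n$ within the ball $B := N(A, 2R)$, during $[t, t+C]$) one vertex or edge at a time; there are at most $(2j)^{2R+1}a$ such pieces.

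The key step is a bounded-differences estimate: changing the clocks of one site $x$ should change $\E[\sum_w X_w \mid \cdot]$ by at most a constant $c(j,R)$. Here non-monotonicity bites, since a perturbation at $x$ could in principle propagate arbitrarily far within time $C$. To handle this, I would replace $X_w$ by a local proxy $Y_w$: the event that the ``ghost-infection'' strategy from the proof of Lemma \ref{lm:fastDeimWhpSurvival} succeeds along $P_w$ using only the clocks on $P_w$. This proxy is a function solely of the clocks on $P_w$, whatever happens outside, and that is exactly what conditioning on $\overline{\mathcal F}_{P_w}$ in Lemma \ref{lm:fastDeimWhpSurvival} is designed to guarantee. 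Then $Y_w \le X_w$, $\E Y_w \ge 3/4$, and each site lies on at most $(2j)^{R+1}$ paths, so each $Y_w$ depends on at most $R+1$ sites and each site affects at most $(2j)^{R+1}$ of the $Y_w$. Azuma's inequality over at most $(2j)^{2R+1}a$ steps with increments bounded by $(2j)^{R+1}$ yields
\[
\P\Bigl(\textstyle\sum_w Y_w \le \tfrac54 a\Bigr) \le 2\exp\Bigl(-\frac{(a/4)^2}{2\,(2j)^{2R+1}a\,(2j)^{2R+2}}\Bigr) = 2\exp(-a/C'),
\]
with $C' = C'(j,R)$.

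The main obstacle is making $Y_w$ rigorously depend only on the clocks of $P_w$ while still satisfying $Y_w \le X_w$ and $\E Y_w\ge 3/4$. One has to check that the proof of Lemma \ref{lm:fastDeimWhpSurvival} really constructs such an event: ghost infections succeed regardless of outside neighbours, and Lemma \ref{lm:Pair:fastDeimLongSurvival}'s reset argument uses only the edge's own clocks. If it does not, I would fall back to disjoint paths and a union bound over colour classes. In that case the constants shrink, so the $5a/4$ threshold may require first choosing $\varepsilon$ smaller, but the statement fixes $\varepsilon=1/4$, which is why the local-proxy route is preferable.
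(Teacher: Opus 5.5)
Your proposal is correct and follows essentially the same route as the paper. The paper also builds a Doob martingale by revealing the clocks on $(t,t+C]$ site by site in the $R$-neighbourhood of $I_t^0$ in $\overline{G}_n$, with initial value at least $\tfrac34\cdot 2a$ from Lemma \ref{lm:fastDeimWhpSurvival} and the expansion of $W_0$, increments bounded by $(2j)^{R+1}$, and Azuma's inequality with deviation $a/4$. Your local proxy $Y_w$ plays the role of the paper's set $\mathcal X$ (vertices reached by ghost-infection paths inside $N(v,R)$), and it makes explicit the bounded-difference step that the paper simply asserts. One caution on the check you flagged: you cannot read $Y_w$ off the existing proofs, because the reset counting in Lemma \ref{lm:Pair:fastDeimLongSurvival} uses stopping times that depend on the dynamics outside the edge. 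Instead build it directly from the path clocks, for example from short windows in which the target's deimmunization clock rings, then the infection clock from its path neighbour rings last, while the target's own recovery clock stays silent.
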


\begin{proof}
	Let $G_{n, t}$ be an induced subgraph of $\overline{G}_n$ on the set $\bigcup_{v \in I_t^0} N(v, R)$. Moreover, let $\mathcal X$ be the set of vertices $u \in I^0_{t+C}$ such that there exists $v \in I^0_t$ and a path of ghost-infection from $v$ at time $t$ to $u$ at time $(t + C)$, and all the vertices on the path lies in $N(v, R)$.
	
	Now, the number of vertices in $G_{n, t}$ is at most $a' \le a (2j)^{R+1}$, so we denote the vertices in $G_{n, t}$ by $v_1, \dots, v_{a'}$. Moreover, for each $i = 0, 1, \dots, a'$, let $\mathcal G_i$ be the $\sigma$-algebra generated by the randomness of the recovery clocks, deimmunization clocks, and infection clocks during time $(t, t+C]$ on the vertices $v_1, \dots, v_i$ and edges that connect them. Moreover, let $\overline{\mathcal F}_{G_{n,t}}$ be the $\sigma$-algebra generated by the randomness of all the clocks in $G$ during time $(t, t+C]$ that are not part of those generating $\mathcal G_{a'}$. Let 
	$$\mathcal X_i := \E\left( |\mathcal X| \ \bigg\vert \ \sigma(\mathcal G_i, \mathcal F_t, \overline{\mathcal F}_{G_{n,t}} ), |I_t^0| = a \right).$$
	
	Then we have $\mathcal X_{a'} = |\mathcal X|$. The rest of our proof follows that of \cite[Lemma 5.4]{bhamidi2021} almost verbatim. Note that $(\mathcal X_i)_{0 \le i \le a'}$ is a (Doob) martingale, so by Azuma's inequality, we have
	$$ \pr\left( |\mathcal X_{a'} - \mathcal X_0| \ge s \right)\le 2\exp\left( - \dfrac{s^2}{2a' K^2} \right) ,$$
	where $K := \max_{i} ||\mathcal X_{i+1} - \mathcal X_i||_{\infty} \le (2j)^{R + 1}.$
	
	Moreover, by Lemma \ref{lm:fastDeimWhpSurvival} and the expander properties of $W_0$ as in Lemma \ref{lm:structuralLem1}, we have (omitting the indicator on the event $\{|I_t^0| = a\}$)
	$$ \mathcal X_0 = \E\left( |\mathcal X| \ \bigg\vert\ \sigma(\mathcal F_t, \overline{\mathcal F}_{a'}), |I_t^0| = a \right) \ge  \dfrac{3}{4}| N(I^0_t, R) \cap W_0| \ge \dfrac{3}{4} \cdot 2 |I^0_t| = \dfrac{3a}{2}.$$
	
	Substituting $s = a/4$ and note that $\mathcal X_{a'} \le |I_{t + C}^0|$, our proof is completed.
\end{proof}

Lemma \ref{lm:expanderIncreasesInfected1} readily establishes the lower bound for survival time in Theorem \ref{thm:longSurvivalAlmostSIS}. The proof of both lower bound and upper bound then follows exactly the same way as that in \cite[pg. 266]{bhamidi2021}.
\section{Long survival for heavy-tailed sparse graphs}\label{sec:noSubcritPhaseInHeavyTail}
In this section, we prove Theorem \ref{thm:noSubcritPhaseInHeavyTail}. The idea centers on demonstrating that in heavy-tailed networks, high-degree hubs sustain the SIRS process for exponential time regardless of how small the infection or deimmunization rates are. We first establish the existence of a robust vertex expander within the configuration model $G_n$ by identifying a set of high-degree vertices $W$ with degrees in the range $[M, f(M)]$. By utilizing a "blue edge" coloring scheme—where exactly $M$ half-edges per vertex are colored and matched—we prove that with high probability, the graph maintains dense internal connectivity. We then apply a cut-off line algorithm to prune low-degree nodes, extracting a subgraph $W_0$ that acts as a {\bf vertex expander}, ensuring that any small subset of infected vertices has a neighborhood significantly larger than itself.  

The dynamical phase of our proof utilizes a martingale argument to demonstrate that the infection is more likely to grow than to vanish. We show that any infected vertex in the expander will likely re-infect its neighbors within a fixed time window, effectively leveraging the topological expansion of $W_0$ to drive the epidemic forward. By constructing a Doob martingale and applying Azuma's inequality, we prove that the probability of the infection count decreasing is exponentially small. This eliminates the subcritical phase, allowing the endemic state to persist for time $\exp(\Theta(n))$ w.h.p.

To simplify notations for the remainder of this section, we define, for each $M > 0$, 
$$ \frak{u}_M := \mu([M, f(M)]). $$

\subsection{Structural lemmas for heavy-tailed sparse graph}
It is well-known that with high probability heavy-tailed graphs $G_n$ contains a giant $K$-core for any $K$ (for example, see \cite{fernholz03}). An $K$-core of a graph is the largest induced subgraph such that all vertices have (induced) degree at least $K$. Our following lemma asserts that we can obtain something even finer: in fact, we can find a vertex expander within this graph with well-behaved internal degree.

\begin{definition}
	Suppose $G = (V, E)$. For $\beta \in (0, 1)$ and $C > 0$, define a subset $W_0 \subseteq V$ a $(\beta, C)$-\textit{vertex expander} of $G_n$ if for all $A \subseteq W_0$ with $|A| \le \beta |W_0|$, we have 
	$$ |N(A, 1) \cap W_0| \ge (1 + C)|A|. $$
\end{definition}
The following lemma asserts that we can find a good vertex expander within $G_n$.

\begin{lemma}\label{lm:structuralLemmaExpander}
	Suppose $G_n \sim \mathcal G_{conf}(n, \mu)$, where $\mu$ satisfies the conditions in Theorem \ref{thm:noSubcritPhaseInHeavyTail}. Then for every $K > 0$, there exist $\beta, \gamma \in (0, 1)$ and $K' > K$ (dependent on $K, \mu$) such that w.h.p., there exists a subgraph $W_0 \subseteq G_n$ such that the followings hold:
	\begin{itemize}
		\item $W_0$ has at least $\gamma n$ vertices,
		\item for all $v \in W_0$, $\deg_{W_0} v \in [K, K']$, and
		\item $W$ is a $(\beta, K)$-vertex expander.
	\end{itemize} 
\end{lemma}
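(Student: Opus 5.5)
We construct $W_0$ inside the set $W$ of vertices whose degree lies in $[M, f(M)]$, with $M$ chosen large depending on $K$ and $\mu$.

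\textbf{Step 1 (choice of $M$ and blue edges).} By the third hypothesis of Theorem \ref{thm:noSubcritPhaseInHeavyTail}, we fix $M \ge M_0(K)$ with $M^2 \frak{u}_M \ge L$, where $L$ is a large constant to be chosen. By the law of large numbers, w.h.p.\ $|W| = (1+o(1))\frak{u}_M n$. In the configuration model, we colour exactly $M$ half-edges of each $v \in W$ blue; there are $\ell_n = (d+o(1))n$ half-edges in total. Each blue half-edge is matched to another blue half-edge with probability about $M|W|/\ell_n \approx M\frak{u}_M/d$. Exposing the matching of blue half-edges first, we obtain a random multigraph $H$ on $W$. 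It is essentially a configuration-type graph in which each vertex has $\text{Bin}(M, p)$ blue--blue edges, with $p \approx M\frak{u}_M/d$. The mean internal degree is then $Mp \approx M^2\frak{u}_M/d \ge L/d$, which is large. The maximal internal degree is at most $M$, so we may take $K' := M$.

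\textbf{Step 2 (pruning via a cut-off).} We run the standard peeling algorithm: we repeatedly delete vertices of $W$ whose current blue--blue degree is below $2K$, and we also delete vertices carrying loops or multi-edges, of which there are $O(1)$ in expectation. Since the blue degrees are i.i.d.\ $\text{Bin}(M,p)$ with mean at least $L/d \gg K$, the $k$-core heuristics of \cite{fernholz03} (or a direct exploration/differential-equation argument, or a first-moment bound on the number of small sets that are sparse inside) show that w.h.p.\ the surviving set $W_0$ has size at least $\gamma n$, with $\gamma$ of order $\frak{u}_M$. Every remaining vertex then has internal degree in $[2K, M] \subseteq [K, K']$.

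\textbf{Step 3 (vertex expansion).} This step is the main obstacle. We must show that for $\beta$ small, w.h.p.\ every $A \subseteq W_0$ with $|A| \le \beta|W_0|$ satisfies $|N(A,1)\cap W_0| \ge (1+K)|A|$. Since the internal minimum degree is at least $2K$, a failure means there is a set $B = N(A,1)\cap W_0$ of size at most $(1+K)|A|$ that contains at least $K|A|$ edges with an endpoint in $A$. The edge density inside $B$ is then at least $\frac{K}{1+K}\cdot$(a constant multiple of) $|B|$.

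We bound this by a union bound over pairs $(A, B)$ with $|A| = a$ and $|B| = b \le (1+K)a$. There are at most $\binom{|W|}{a}\binom{|W|}{b}$ such pairs. The probability that at least $Ka$ of the half-edges of $A$ are matched into $B$ is at most
$$\binom{Ma}{Ka}\Big(\frac{Mb}{\ell_n}\Big)^{Ka}.$$
Multiplying, the total contribution is at most
$$\big(C_{K,M}\,(a/n)^{K-2-K\cdot o(1)}\big)^{a}.$$
This is summable over $1 \le a \le \beta n$ once $K \ge 3$ and $\beta$ is small enough in terms of $K$ and $M$. Small $K$ is handled by monotonicity in $K$. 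The bound transfers to $W_0$ because pruning only removes vertices while keeping degrees at least $K$. A small subtlety is that we need the neighbourhood counted inside $W_0$ rather than in $H$. To handle it, we apply the sparse-set argument directly to $W_0$, using that each vertex of $A$ has at least $2K$ neighbours in $W_0$.

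Finally, the version without the $\kappa$-moment only affects concentration of $\ell_n$ and of $|W|$, which already hold under the first-moment assumption alone. The $\kappa$-moment is presumably needed in the later dynamical part, via Lemma \ref{lm:vonBahrEsseenineq}, to control total degrees.
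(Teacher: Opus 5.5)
Your Steps 1 and 2 follow the same route as the paper: blue half-edges on $W$, then peeling the blue--blue graph to a core. (The paper proves the concentration of blue--blue edges and does the peeling by hand with the cut-off line algorithm, where you cite $k$-core results.) The genuine gap is in Step 3, and it comes from your pruning threshold $2K$: the union bound you set up does not close. With $b=(1+K)a$,
\[
\binom{|W|}{a}\binom{|W|}{b}\le \Big(\frac{Cn}{a}\Big)^{(2+K)a},
\qquad
\binom{Ma}{Ka}\Big(\frac{Mb}{\ell_n}\Big)^{Ka}\le \Big(\frac{Ca}{n}\Big)^{Ka}.
\]
So the product is $C^{a}(n/a)^{2a}$, not $\big(C(a/n)^{K-2}\big)^{a}$: the factor $\binom{|W|}{b}$ contributes $(n/a)^{(1+K)a}$, not $(n/a)^{a}$. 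The resulting sum over $a$ is not $o(1)$ for any $K$, so neither restricting to $K\ge 3$ nor monotonicity in $K$ rescues it. Using all $2Ka$ half-edges of $A$ does not help either, because they may be matched among themselves, so you only get about $Ka$ matching events. Even choosing $B\supseteq A$, which leaves only $\binom{|W|}{Ka}$ choices of $B\setminus A$, still leaves a net factor $C^a(n/a)^{a}$. A minimum internal degree of $2K$ is simply too small for this crude first-moment count.

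The repair is cheap because $L$ is at your disposal. Prune at a threshold $s$ with $s/2>K+2$; the core stays giant, since the mean blue degree is of order $M^2\mathfrak{u}_M/d$. Then at least $sa/2$ successive matchings of half-edges of $A$ must each land among the at most $M(1+K)a$ blue half-edges of $B$. Each pair $(A,B)$ therefore contributes $(Ca/n)^{sa/2}$, and the total is $\sum_{a\le\beta n}\big(C(a/n)^{s/2-2-K}\big)^{a}=o(1)$ for small $\beta$, splitting the sum at $a=\log n$. This is what the paper does, with $s=\theta M/20\ge 10(1+K)$ secured by taking $M^2\mathfrak{u}_M$ large.

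If you want to keep $s=2K$, you need a finer count that separates edges inside $A$ from edges leaving $A$. Equivalently, use the fact that w.h.p.\ every set $S$ with $|S|\le\beta n$ spans at most $(1+\epsilon)|S|$ edges, which forces $|N(A,1)|\ge\frac{2K-1-\epsilon}{1+\epsilon}|A|$. That works for $K\ge 3$, but it is a different argument from the one you wrote and has to be proved separately.
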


The idea is similar to that in Bhamidi--Nam--Nguyen--Sly \cite{bhamidi2021}: we first find a subset of sufficiently high-degree vertices $W$ from $G_n$. For vertex in $W$, color exactly $M$ half-edges \textit{blue} (where $M$ is chosen according to $K$ later). When exploring $1$-neighborhoods of vertices in $W$ simultaneously, we call an edge \textit{blue} if it is created by matching two \textit{blue} half-edges. The idea is to prove that w.h.p. we have enough blue edges so that roughly speaking, vertices in $W$ have high internal degree, and then we can extract a high-degree core $W_0$ from $W$ appropriately. The rest of this subsection is devoted to proving Lemma \ref{lm:structuralLemmaExpander}. 

We start with the following useful lemma.
\begin{lemma}\label{lm:preprocessingForPureExpander}
	Suppose $G_n \sim \mathcal G_{conf}(n, \mu)$, where $\mu$ satisfies the conditions in Lemma \ref{lm:structuralLemmaExpander}. Define $$W := \left\{ v \in G_n: \deg_{G_n} v \in [M, f(M)] \right\}, $$
	for any fixed $M$. Then the following holds w.h.p.
	\begin{itemize}
		\item[(i)] The total degree in $G_n$, denoted $d_{G_n}$, is at most $2nd$.
		\item[(ii)] The number of vertices in $W$ is at least $ n\frak{u}_M/2 $.
	\end{itemize}
\end{lemma}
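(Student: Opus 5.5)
Both parts come from the fact that in $\mathcal G_{conf}(n,\mu)$ the degrees $D_1,\dots,D_n$ are i.i.d.\ with law $\mu$; the pairing of half-edges plays no role. So each claim reduces to a law of large numbers for a sum of i.i.d.\ variables. (If the model conditions on an even total degree, or adds one half-edge when the sum is odd, this changes $d_{G_n}$ by at most $1$ and conditions on an event of probability bounded away from $0$. Neither affects the w.h.p.\ statements.)

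\textbf{Part (i).} We have $d_{G_n}=\sum_{i=1}^n D_i$, where $\E D_i=d<\infty$ by the first hypothesis of Theorem \ref{thm:noSubcritPhaseInHeavyTail}. The weak law of large numbers needs only a finite first moment, so $d_{G_n}/n\to d$ in probability. Hence
$$\P(d_{G_n}>2nd)\le \P\left(\left|\frac{d_{G_n}}{n}-d\right|>d\right)\to 0.$$
When the $\kappa$-th moment is finite, one can instead get an explicit rate. Apply the von Bahr--Esseen inequality (Lemma \ref{lm:vonBahrEsseenineq}) with $r=\kappa$ to the centered variables $D_i-d$, then Markov's inequality:
$$\P\big(d_{G_n}-nd>nd\big)\le \frac{2n\,\E|D-d|^{\kappa}}{(nd)^{\kappa}}=O\big(n^{1-\kappa}\big).$$
This rate is not needed for the lemma as stated.

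\textbf{Part (ii).} We have $|W|=\sum_i \mathbf 1\{D_i\in[M,f(M)]\}\sim \mathrm{Bin}(n,\frak u_M)$, with $M$ fixed and hence $\frak u_M$ fixed. If $\frak u_M>0$, Chebyshev's inequality gives
$$\P\big(|W|<n\frak u_M/2\big)\le \frac{n\frak u_M(1-\frak u_M)}{(n\frak u_M/2)^2}\le\frac{4}{n\frak u_M}\to0,$$
and a Chernoff bound even gives $\exp(-n\frak u_M/8)$. If $\frak u_M=0$, the claim is trivial.

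\textbf{Main obstacle.} There is no real obstacle. The only point to watch is that part (i) uses just the first moment, so it holds for all $\mu$ in the theorem, including the case without a finite $\kappa$-moment. Later, $M$ will be chosen with $M^2\frak u_M$ large; since $M$ is still a constant in $n$, the bound in part (ii) is unaffected.
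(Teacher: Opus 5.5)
Your proof is correct. Part (ii) is the paper's argument: $|W|$ is a binomial count and a Chernoff bound finishes it. Part (i), however, takes a different and more general route.

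The paper bounds $\P(d_{G_n}>2nd)$ using the von Bahr--Esseen inequality and Markov's inequality for $d_{G_n}^{\kappa}$. By the paper's own Remark, this is the only place the $\kappa$-moment hypothesis is used, and it yields the rate $O(n^{-(\kappa-1)})$. You instead use the weak law of large numbers, which needs only $\E D=d<\infty$ and gives no rate.

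The rate is never used later; every downstream step only needs ``w.h.p.'', so your route loses nothing and gains generality. It shows the lemma holds w.h.p.\ for every $\mu$ in Theorem \ref{thm:noSubcritPhaseInHeavyTail}, not just those with a finite $\kappa$-moment. That makes the Remark's weakening to probability $1-\varepsilon_0$ unnecessary for this step. It also suggests the $(1-\varepsilon_0)$ clause of the theorem could be upgraded to w.h.p.

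Your optional von Bahr--Esseen computation is worth keeping, because it is the correct version of the paper's step. The inequality requires mean-zero summands, and you apply it to $D_i-d$. The paper applies it to the uncentered $D_i$ and asserts $\E d_{G_n}^{\kappa}\le 2n\E D^{\kappa}$. That is false for large $n$: Jensen gives $\E d_{G_n}^{\kappa}\ge (nd)^{\kappa}$, which grows faster than $n$ since $\kappa>1$.
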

With this lemma, we will later choose $M$ sufficiently large depending on $K$.

\begin{proof}
	The proof of the first item is standard: since $\{\deg_{G_n}(v)\}_{v \in V(G_n)}$ are i.i.d. $\sim \mu$, by Lemma \ref{lm:vonBahrEsseenineq}, we have $\E d_{G_n}^{\kappa} \le 2n \E_{D \sim \mu} D^{\kappa} < \infty$, so the probability of $d_{G_n}$ exceeding $2nd$ is 
	$$ \P\left( d_{G_n} > 2nd \right) \le \dfrac{2n \E_{D\sim \mu} D^{\kappa}}{(2nd)^{\kappa}} = O(n^{-(\kappa - 1)}).$$
	
	The second item follows from the fact that $W \sim \text{Bin}(n, \frak{u}_M)$, so we can use Chernoff inequality.
\end{proof}

\begin{remark}
	If $\mu$ does not have a finite $\kappa$-moment for some $\kappa > 1$, then we can still apply Markov's inequality to show that the events in Lemma \ref{lm:preprocessingForPureExpander} happen with probability at least $(1 - \varepsilon_0)$ (where instead of using the $2nd$ bound, we use $(1 - \varepsilon_0/2)^{-1} nd$ bound instead). The rest of the proof still applies, and the survival time obtained Theorem \ref{thm:noSubcritPhaseInHeavyTail} holds with probability $(1 - \varepsilon_0)$ instead of w.h.p. Of course, the constant in front of $n$ in the power will also depend on $\varepsilon_0$.
\end{remark}

Consider the following coloring scheme: for each vertex in $W$, we pick exactly $M$ half-edges among all those of $v$ uniformly at random, then color them \textit{blue}. We perform uniform random matching among all half-edges in $G$, and slightly abusing notation, we use $W$ to denote the induced subgraphs on vertices in $W$. Denote $\frak{R}$ the number of \textit{blue} edges (edges formed by two \textit{blue} half-edges), and for each $v \in W$, let $\deg_b(v)$ be the number of \textit{blue} edges adjacent to $v$.

Conditioned on $\frak{R}$, the distribution of $\{\deg_b(v)\}_{v \in W}$ is given by 
\begin{center}
	$\{B_v\}_{v \in W},  B_v \sim \text{i.i.d. } \text{Bin}\left(M, \dfrac{2\frak{R}}{M|W|}\right)$ conditioned on $\sum_{v \in W} B_v = 2\frak{R}$.
\end{center}
Define $\theta:= \dfrac{2\frak{R}}{M|W|}$. 
\begin{lemma}\label{lm:probOfBlueEdgeIsHigh}
	With high probability, $\theta \ge \dfrac{\frak{u}_M M}{24d}$.
\end{lemma}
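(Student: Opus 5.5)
The plan is to work conditionally on the degree sequence and to lower-bound the number $\frak R$ of blue edges by a binomial variable. We then apply a Chernoff bound.

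\textbf{Translating the target.} Write $B := M|W|$ for the total number of blue half-edges and $L := d_{G_n}$ for the total number of half-edges, so that $B \le L$. Since $\theta = 2\frak R/B$, the claim $\theta \ge \frak u_M M/(24d)$ is equivalent to
$$\frak R \ge \frac{B\,\frak u_M M}{48 d}.$$
By Lemma \ref{lm:preprocessingForPureExpander}, w.h.p. we have $L \le 2nd$ and $|W| \ge n\frak u_M/2$, so $B \ge nM\frak u_M/2$. We fix $M$ with $\frak u_M > 0$; such $M$ exist by the third hypothesis of Theorem \ref{thm:noSubcritPhaseInHeavyTail}. Then $B$ is of order $n$. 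From now on we condition on the degrees and on this good event, so only the uniform matching is random.

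\textbf{Sequential matching.} The uniform perfect matching can be generated sequentially. At each step we choose an unmatched blue half-edge (any rule depending on the past is allowed) and match it to a uniformly random other unmatched half-edge.
\begin{itemize}
\item As long as at least $B/2$ blue half-edges remain unmatched, the partner is blue with conditional probability at least $(B/2-1)/L =: p$, whatever the history.
\item Each step uses up at most two blue half-edges. Hence the first $\lfloor B/4\rfloor$ steps all occur while at least $B/2$ blue half-edges remain.
\end{itemize}
A standard coupling with independent uniforms then shows that the number of blue-blue matches among these steps stochastically dominates $\mathrm{Bin}(\lfloor B/4\rfloor, p)$. This gives $\frak R \succeq \mathrm{Bin}(\lfloor B/4\rfloor, p)$.

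\textbf{Mean and concentration.} The binomial mean is approximately
$$\frac{B^2}{8L} \ge \frac{B\cdot nM\frak u_M/2}{16 nd} = \frac{B\,\frak u_M M}{32 d}.$$
This exceeds the target $B\frak u_M M/(48d)$ by a constant factor $3/2$, up to $O(1)$ corrections. Because the mean is linear in $n$, the Chernoff bound gives
$$\P\left(\frak R < \frac{B\frak u_M M}{48 d}\right) \le e^{-cn}.$$
Combining this with the w.h.p. events of Lemma \ref{lm:preprocessingForPureExpander} proves the lemma.

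\textbf{Expected obstacle.} The only delicate point is to justify the stochastic domination cleanly, despite the dependence inherent in the matching. The sequential exposure with a uniform lower bound on the conditional success probability handles this. The slack between the constants $32$ and $48$ absorbs the $-1$ terms and the floors.
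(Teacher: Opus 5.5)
Your proof is correct, but it takes a different route from the paper's.

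\textbf{The paper's route.} The paper uses the cut-off line algorithm. It assigns uniform heights to all half-edges, splits the blue half-edges into two equal halves $A$ and $B$, and matches every half-edge of $A$ to the highest unmatched half-edge. At least $d_W/4$ of the highest half-edges are consumed in this phase. A Chernoff bound then shows the cut-off line ends below $1-\theta_0/5$, where $\theta_0 = d_W/d_{G_n}$. Every half-edge of $B$ lying above that height is a $\mathrm{Bin}(d_W/2,\theta_0/5)$ count, and each of them must have been matched to a half-edge of $A$. This gives $\mathfrak R \ge \theta_0 d_W/12$, hence $\theta \ge \theta_0/6 \ge \mathfrak u_M M/(24d)$.

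\textbf{Your route.} You expose the matching sequentially from the blue side. A uniform lower bound on the conditional success probability lets you dominate $\mathfrak R$ by $\mathrm{Bin}(\lfloor B/4\rfloor,(B/2-1)/L)$.

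\textbf{Comparison.} Your argument is more elementary. It needs no heights and no two-phase split, and it uses a single Chernoff bound instead of two. It also gives a slightly better constant: the mean yields $\theta \gtrsim \mathfrak u_M M/(16d)$, versus the paper's $1/24$. The paper's choice is mainly for uniformity with the next lemma, where the same cut-off line machinery is reused for the pruning step that extracts the core $W_0$. Both arguments rest on the same inputs from Lemma~\ref{lm:preprocessingForPureExpander}, namely $d_{G_n} \le 2nd$ and $|W| \ge n\mathfrak u_M/2$, which make every relevant mean linear in $n$.
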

In order to prove this lemma, we first state the following cut-off line algorithm.
\begin{definition}[Cut-off line algorithm]\label{def:cutofflinealgo}
	Given a graph $G_n$ in which each vertex $v$ has degree $d_{G_n}(v)$, a perfect matching of the half-edges of $G_n$ is obtained through the following algorithm:
	\begin{itemize}
		\item Each half-edge of a vertex $v$ is independently assigned a height uniformly chosen in $[0, 1]$, and is placed on the line of vertex $v$.
		\item Set the cut-off line at height $1$.
		\item Pick an unmatched half-edge independent of the heights of all unmatched half-edges and match it to the highest unmatched half-edge. Move the cut-off line to the height of the latter half-edge.
	\end{itemize}
\end{definition}

\begin{proof}[Proof of Lemma \ref{lm:probOfBlueEdgeIsHigh}]
	We have established that $|W| \ge \dfrac{n\frak{u}_M}{2}$. Thus, w.h.p., the number of blue half-edges is,  at least $M |W| \ge \dfrac{n\frak{u}_M M}{2}$, while the total degree of $G_n$ is at most $2nd$ by Lemma \ref{lm:preprocessingForPureExpander}. Let 
	$$ \theta_0 := \dfrac{\#\{\text{blue half-edges in } W\}}{\#\{\text{half-edges in } G_n\}} =: \dfrac{d_W}{d_{G_n}} \ge \dfrac{u_M M}{4d}. $$
	
	We now show that w.h.p., we have $\frak{R} \ge \theta_0 d_W/12$, which completes our proof since $d_W = M |W|$. We will split the set of blue half-edges of $W$ into two parts $A, B$ of equal size (independent of their height). First, we match the half-edges in $A$. Then at least $d_W/4 = \theta_0 d_{G_n}/4$ highest half-edges of $G_n$ are matched during this step. The cut-off line is then of height at most $(1 - \theta_0/5)$: this follows by applying Chernoff bounds for the probability that $\text{Bin}\left( d_{G_n}, \theta_0/5 \right) \ge \theta_0/4$. 
	
	Similarly, the number of half-edges in $B$ that lies above the cut-off $(1 - \theta_0/5)$ is at least $\theta_0 d_{G_n}/12$ w.h.p. (by applying Chernoff bounds for $\text{Bin}\left( d_{G_n}/2, \theta_0/5 \right)$). Thus, w.h.p., $\frak{R} \ge \theta_0 d_{G_n}/12$, completing our proof.
\end{proof}

From the lemma above, we will find a high-degree core of $W$.

\begin{lemma}\label{lm:StartToPickHighDegCore}
	For sufficiently large $M$, let $\theta$ be as in Lemma \ref{lm:probOfBlueEdgeIsHigh}. Suppose $\theta M \ge 200(1 + K)$. Then w.h.p. we can find $W_0 \subseteq W$ such that the following holds.
	\begin{itemize}
		\item[(i)] The number of vertices in $W_0$ is at least $|W|/2$.
		\item[(ii)] $W_0$ is a $\frac{\theta M}{20}$-core.
		\item[(iii)] Each vertex in $W_0$ has degree at most $f(M)$.
	\end{itemize}
\end{lemma}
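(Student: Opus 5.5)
We will obtain $W_0$ by the standard peeling procedure applied to the blue subgraph on $W$. Let $B$ be the graph on vertex set $W$ whose edges are the blue edges, and let $W_0$ be the $\frac{\theta M}{20}$-core of $B$. This is the result of repeatedly deleting any vertex whose current blue degree is below $\theta M/20$. Then (ii) holds by construction, since the blue degree within $W_0$ is at most the induced degree in $W_0$. Item (iii) is immediate, because every vertex of $W$ has total degree in $[M,f(M)]$ in $G_n$, so its degree in any subgraph is at most $f(M)$. All the work is therefore in (i): we must show the peeling removes at most $|W|/2$ vertices.

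\textbf{Step 1: few low-blue-degree vertices.} Conditionally on $\frak R$, the blue degrees $\{\deg_b(v)\}_{v\in W}$ are i.i.d.\ $\text{Bin}(M,\theta)$ conditioned on their sum being $2\frak R$. By Lemma \ref{lm:probOfBlueEdgeIsHigh} we have $\theta\ge \frak u_M M/(24d)$ w.h.p.
\begin{itemize}
\item The conditioning is on an event of probability $\Omega(1/\sqrt{|W|M})$, by the local CLT for the sum at its mean. So any event of unconditional probability $e^{-\Omega(|W|)}$ still has conditional probability $o(1)$.
\item A Chernoff bound gives $\P(\text{Bin}(M,\theta)<\theta M/10)\le e^{-c\theta M}$. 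Since $\theta M\ge 200(1+K)$, this probability is a small constant, say $\le e^{-20}$.
\item Applying Chernoff again over the $|W|$ independent copies, and then de-conditioning as above, the set $L_0:=\{v:\deg_b(v)<\theta M/10\}$ satisfies $|L_0|\le \eta|W|$ w.h.p., with $\eta=2e^{-c\theta M}$.
\end{itemize}

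\textbf{Step 2: the peeling cascade stays small.} Suppose the peeling removes more than $|W|/2$ vertices. Stop it at the moment the removed set $S$ reaches size $s:=\lceil 4\eta|W|\rceil$ (assuming $4\eta<1/2$). Each vertex of $S\setminus L_0$ had original blue degree at least $\theta M/10$ but lost more than $\theta M/20$ blue edges to earlier-removed vertices. Summing over these vertices, $S$ spans at least $(s-\eta|W|)\,\theta M/40\ge s\theta M/60$ blue edges. We then bound, in the configuration model, the probability that some set of size $s\le \epsilon|W|$ spans at least $s\theta M/60$ blue edges. We use a union bound over sets $\binom{|W|}{s}$ and over choices of half-edges. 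In the uniform matching of the blue half-edges (equivalently via the cut-off line algorithm, Definition \ref{def:cutofflinealgo}), each such pairing appears with probability about $(1/d_{G_n})^{k}$. This gives a bound of order
\[
\Big(\tfrac{e|W|}{s}\Big)^s\Big(\tfrac{C\, sM}{d_{G_n}}\cdot\tfrac{1}{\theta}\Big)^{s\theta M/60}.
\]
Since $d_{G_n}\ge M|W|$ and $s/|W|=4\eta$ is tiny, the second factor is $(C'\eta/\theta)^{s\theta M/60}$. This beats $(e/(4\eta))^{s}$ once $\theta M$ is large, so the probability is $e^{-\Omega(n)}$. Hence the removed set has size at most $s<|W|/2$, and (i) follows.

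The main obstacle is Step 2. The estimate must be made rigorous for the configuration model conditioned on $\frak R$, and the counting must be uniform in the degrees, which range up to $f(M)$. I would first attempt it with the unconditional uniform matching, bounding the probability that $k$ specified blue half-edges in $S$ pair among themselves by $\prod_i \frac{sM}{d_{G_n}-2i}$. The dependence on $f(M)$ disappears because only the $M$ blue half-edges per vertex matter.
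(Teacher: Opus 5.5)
Your route differs from the paper's. You peel the blue graph on $W$ to its $\frac{\theta M}{20}$-core and rule out a large cascade by a first-moment bound on small sets that span many blue edges. The skeleton is sound: Step~1, and the observation that a removed prefix $S$ of size $s$ spans at least $c\,s\theta M$ blue edges, are fine. The decisive estimate in Step~2, however, fails as written. You use $d_{G_n}\ge M|W|$, so the per-edge factor $\frac{CsM}{\theta d_{G_n}}$ becomes $\frac{4C\eta}{\theta}$. You then claim that $\bigl(\frac{e}{4\eta}\bigr)^{s}\bigl(\frac{C'\eta}{\theta}\bigr)^{s\theta M/60}$ is small once $\theta M$ is large. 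But nothing bounds $\theta$ below independently of $M$. In fact $\theta$ is of the order of $\theta_0=M|W|/d_{G_n}\approx M\frak{u}_M/d$, and $M\frak{u}_M\le \E[D\mathbf{1}_{\{D\ge M\}}]\to 0$ because $\E D<\infty$. Writing $x=\theta M$, you have $\eta/\theta=2Me^{-cx}/x$, while the hypothesis only guarantees $x\ge 200(1+K)$. For large $M$ this ratio can be of order $M$, so the bracket exceeds $1$ and the union bound gives nothing. Re-tuning the thresholds cannot save this. $L_0$ contains every vertex of blue degree $0$, so any valid $\eta$ satisfies $\eta\gtrsim(1-\theta)^M\approx e^{-x}$, and you would need $\theta M\gtrsim\log M$. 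The assumption $\limsup_M M^2\frak{u}_M=\infty$ does not give this: it allows $M^2\frak{u}_M$, and hence $\theta M$, to grow arbitrarily slowly.

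The loss comes entirely from $d_{G_n}\ge M|W|$, which forgets that blue half-edges are only a $\theta_0$-fraction of all half-edges. There are two ways to fix it.
\begin{itemize}
\item Use $d_{G_n}=M|W|/\theta_0$ together with $\theta_0\le 6\theta$, which is what the proof of Lemma~\ref{lm:probOfBlueEdgeIsHigh} gives via $\frak R\ge\theta_0 d_W/12$.
\item More cleanly, do Step~2 conditionally on $\frak R$. The $2\frak R$ blue-matched half-edges form a uniform subset of the $M|W|$ blue half-edges and are matched uniformly among themselves. So $k\le \frak R/2$ prescribed disjoint pairs are all matched with probability at most $\bigl(\theta^2/(2\frak R-2k)\bigr)^k\le\bigl(2\theta/(M|W|)\bigr)^k$.
\end{itemize}
Either way the factor becomes $(Cs/|W|)^{s\theta M/60}=(C''\eta)^{s\theta M/60}$, with no $1/\theta$. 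Then $\frac{e}{4\eta}(C''\eta)^{\theta M/60}$ is tiny once $\theta M$ exceeds an absolute constant, so $\theta M\ge 200(1+K)$ suffices. Conditioning on $\frak R$ also makes $\theta$, $\eta$ and $s$ deterministic, which your union bound over sets of size $s$ tacitly needs. Repaired this way, your argument is a valid alternative, and it even shows that only $O(e^{-c\theta M})|W|$ vertices are pruned.

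The paper avoids any counting over sets. It runs the pruning inside the cut-off line algorithm. At most $\frac{\theta M}{10}|W|$ half-edges are matched during pruning, while w.h.p.\ at least $\frac{\theta M}{4}|W|$ lie above height $2/3$, so the line never drops below $2/3$. A vertex can then be pruned only if fewer than $\frac{\theta M}{20}$ of its blue half-edges lie below $2/3$. That event has probability at most $1/200$, independently over vertices, and Chernoff gives $|W_0|\ge |W|/2$.
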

\begin{proof}
	The proof follows from \cite[Lemma 7.8]{bhamidi2021} almost verbatim; we will write it out for reader's convenience. We see that (iii) simply follows from the definition of $W$. 
	Let $s := \theta M / 20$. 
	
	For the rest of this proof, we only look at the blue half-edges in $W$ that are matched to another blue half-edge in $W$. The idea is to prune out low-(blue)-degree vertices to get a high-degree core, using the cut-off line algorithm as follow. Each blue half-edge is reassigned a height uniformly chosen in $[0, 1]$. If there is a vertex with less than $s$ unmatched half-edge (i.e. less than $s$ half-edge below the cut-off line), then match its half-edges to the highest unmatched half-edges and move the cut-off line accordingly, then prune out this vertex. We repeat this until there are no such vertices left. 
	
	Let $W_0$ be the set of remaining vertices; it is clear that vertices in $W_0$ have internal degree at least $s$, so item (ii) is satisfied. It remains to show item (i), i.e. w.h.p. $|W_0| \ge |W|/2$. We first note that from our assumption, $\frak{R} = O(n)$. Thus, the event $\{\sum_{v \in W} B_v = 2\frak{R}\}$ that we are conditioning on is of size $\Omega(n^{-C})$, whereas when we remove this conditioning, all the subsequent tail events we compute will be exponentially small in $n$, so we can remove this conditioning and view $B_v \sim \text{Bin}(M, \theta)$ as i.i.d.
	
	We show that after the removal, w.h.p. the cut-off line is at least $2/3$. Assuming this, for each $v \in W$, the probability that it is pruned out is at most the probability that it has less than $s$ unmatched half-edges below the line $2/3$, which is $\{\text{Bin}(M, 2\theta/3) \le s\}$. By Chernoff inequality and the assumption that $\theta M \ge 200(1 + K) > 200$, this event happens with probability at most $1/200$. Thus, each vertex has a $199/200$ probability of being retained, so once again by Chernoff's inequality, w.h.p. $|W_0| \ge |W|/2$.
	
	To show that w.h.p. the cut-off line is at least $2/3$, let $a$ be the number of removed vertices ($0 \le a \le |W|$). Each time we prune out a vertex, we match at most $2s$ half-edges, so there are at most $2as \le 2s|W|$ vertices above the cut-off line. On the other hand, for each vertex $v$, the number of half-edges above the cut-off line of $2/3$ follows a $\text{Bin}(M, \theta/3)$ distribution, so by Chernoff's inequality, w.h.p. the number of half-edges above the line $2/3$ is at least $ (M \theta/4)|W| = 5s|W| > 2s|W| $, completing our proof.
\end{proof}

We will now complete the proof by showing that $W_0$ is our desired expander. Note that we can then pick $M$ sufficiently large: by Lemma \ref{lm:probOfBlueEdgeIsHigh},
\begin{equation}\label{eq:choiceOfLargeMwrtKandC}
	\dfrac{\theta M}{20} \ge \dfrac{M^2 \frak{u}_M}{20 \cdot 24 d} > 10(1 + K),
\end{equation}
for sufficiently large $M$. We can simply pick $K' = f(M)$.

\begin{lemma}
	For any fixed $K > 0$, let $M$ and $\theta$ satisfies \eqref{eq:choiceOfLargeMwrtKandC} and Lemma \ref{lm:probOfBlueEdgeIsHigh}, respectively. Then there exists a constant $\beta > 0$ such that w.h.p. $W_0$ is a $(\beta, K)$-vertex expander.
\end{lemma}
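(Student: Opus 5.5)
We argue by a first-moment (union bound) computation on the random blue matching restricted to $W$. Call $W_0$ \emph{bad} if some $A\subseteq W_0$ with $|A|=a\le \beta|W_0|$ has $|N(A,1)\cap W_0|<(1+K)|A|$. Then $B:=N(A,1)\cap W_0\setminus A$ has $|B|<Ka$. Since $W_0$ is an $s$-core with $s=\theta M/20>10(1+K)$ by \eqref{eq:choiceOfLargeMwrtKandC}, every vertex of $A$ has at least $s$ blue edges into $W_0$, all landing in $A\cup B$. So the set $S:=A\cup B$, with $|S|\le(1+K)a$, spans at least $sa/2$ blue edges. It therefore suffices to show that, w.h.p., there is no set $S\subseteq W$ with $|S|=m\le(1+K)\beta'|W|$ spanning at least $\frac{s}{2(1+K)}m\ge 5m$ blue edges. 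Here $\beta'$ is chosen so that, using $|W_0|\ge|W|/2$ from Lemma \ref{lm:StartToPickHighDegCore}, the sets relevant for $W_0$ are covered.

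To bound the probability of a dense set, we condition on $\frak R$ and on the blue degrees, which are at most $M$ per vertex, and treat the blue half-edges as uniformly matched among themselves. The total number of blue half-edges is $2\frak R=\theta M|W|$, which is $\Theta(|W|)$ with constants depending on $M$. For a fixed $S$ of size $m$, the number of blue half-edges at $S$ is at most $Mm$. The probability that at least $k:=\lceil 5m\rceil$ pairs among them are matched to each other is at most
$$\binom{Mm}{2k}\Big(\frac{Mm}{2\frak R-2k}\Big)^{k}\le \Big(\frac{C_M m}{|W|}\Big)^{k},$$
with $C_M$ depending only on $M,\theta$. Summing over $S$ with $\binom{|W|}{m}\le (e|W|/m)^m$ gives
$$\sum_{m\le \beta''|W|}\Big(\frac{e|W|}{m}\Big)^m\Big(\frac{C_M m}{|W|}\Big)^{5m}=\sum_m\Big(eC_M^5\big(\tfrac{m}{|W|}\big)^{4}\Big)^m.$$
For $\beta''$ small (depending on $M,K$), this is dominated by small $m$. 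The terms with $m\le \log n$ contribute $O(|W|^{-4}\mathrm{polylog})=o(1)$, and the terms with larger $m$ are geometrically small. Hence w.h.p. no dense set exists, and $W_0$ is a $(\beta,K)$-vertex expander with $\beta=\beta''/(2(1+K))$.

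The main obstacle is justifying the uniform-matching model for the blue edges. Blue edges are produced by the global configuration matching conditioned on both endpoints being blue half-edges. Conditional on the set of blue-blue pairs, the matching among those paired blue half-edges is uniform, so the estimate above applies with $2\frak R$ in place of the total number of blue half-edges. Lemma \ref{lm:probOfBlueEdgeIsHigh} gives $\frak R=\Omega(n)$ w.h.p., which makes the denominator linear. A secondary subtlety is that $W_0$ is defined from the same matching. This causes no problem, because the union bound is taken over all subsets of $W$ uniformly and so does not depend on how $W_0$ was selected.
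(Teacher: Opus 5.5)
Your proof is correct, but it organizes the first-moment bound differently from the paper. The paper fixes $A,B\subseteq W$ with $|A|=m$ and $|B|=(1+K)m$. It bounds the probability that all (at least $ms$) half-edges of $A$ land among the at most $(1+K)K'm$ half-edges of $B$ by $\bigl(\frac{(1+K)K'm}{Ns}\bigr)^{ms}$, and sums over the $\binom{N}{m}\binom{N}{(1+K)m}$ pairs, giving terms of order $(Cm/N)^{(s-(2+K))m}$. You instead reduce a failure of expansion to the existence of a small set $S\subseteq W$ spanning at least $5|S|$ blue edges. You bound that event in the uniform matching of the blue half-edges paired with blue half-edges, using only that blue degrees are at most $M$ and that $\mathfrak{R}=\Omega(n)$; this gives terms $\bigl(eC_M^5(m/|W|)^4\bigr)^m$.

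What your route buys is rigor on a point the paper glosses over. The paper's product treats the $ms$ half-edges of $A$ as freshly and uniformly matched. Yet the hypothesis $A\subseteq W_0$, which is what guarantees those $ms$ internal half-edges, is determined by the very same matching. Your event makes no reference to $W_0$, so the union bound over all $S\subseteq W$ is valid however the pruning produced $W_0$. The paper's route, in exchange, targets the expansion property directly and yields a decay exponent $s-(2+K)$ that grows with $M$. Yours needs only the fixed margin $s\ge 10(1+K)$, which is exactly what \eqref{eq:choiceOfLargeMwrtKandC} supplies.

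Two small fixes in your write-up. First, condition on the set of blue half-edges that are matched to blue half-edges; conditioning on the blue--blue pairs themselves would fix the matching. Second, the appeal to $|W_0|\ge|W|/2$ is unnecessary, since $|W_0|\le|W|$ already gives $|S|<(1+K)\beta|W|$.
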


\begin{proof}
	Denote $N := |W_0| \ge n \frak{u}_M/4$, and $s := \frac{\theta M}{20} \ge 10(1 + K)$. It suffices to show that for any $A \subseteq W_0$ of size at most $\beta N$, the size of $N(A, 1)$ is at least $(1 + K)$ times that of $A$, i.e. for all $m \le \beta N$ and $A, B \subseteq W$ with $|A| = m, |B| = (1 + K)m$, the neighbors of $A$ are not fully contained in $B$. Fix two sets $A, B \subseteq W$ with $|A| = m, |B| = (1 + K)m$. Then by Lemma \ref{lm:StartToPickHighDegCore},
	\begin{itemize}
		\item The number of half-edges in $A$ is at least $ms$,
		\item The number of half-edges in $B$ is at most $b \le (1 + K)K' m$,
		\item The total number of half-edges in $W$ is $c \ge Ns$.
	\end{itemize}
	Then the probability of all neighbors of $A$ belonging to $B$ is at most
	$$ \prod_{i = 1}^{ms}\dfrac{b - (i - 1)}{c - (2i - 1)} \le \left( \dfrac{(1 + K) K' m}{Ns} \right)^{ms}.$$
	Taking the union bound over $m$ and all choices of $A, B$, then the probability that $W$ is not a $(\beta,K)$-vertex expander is at most
	\begin{align}
		&\sum_{m = 1}^{\beta N} \binom{N}{m}\binom{N}{(1 + K)m} \left( \dfrac{(1 + K) K' m}{Ns} \right)^{ms} \nonumber \\
		\le &  \sum_{m = 1}^{\log n} C_0 \left( C_1\left(\dfrac{\log n}{N}\right)^{s - (2 + K)} \right)^{m} + \sum_{m = \log n + 1}^{\beta N} (C_2 \beta^{s - (2 + K)})^m, \label{eq:BoundBadProbOfExpander} 
	\end{align}
	for constants $C_0, C_1, C_2$ only dependent on $K, K', s$. Choosing $\beta = \frac{1}{2B_2}$ makes \eqref{eq:BoundBadProbOfExpander} of order $o(1)$, completing our proof.
\end{proof}

\subsection{Dynamics within the Expander} 
To establish the long-term persistence of the SIRS process on heavy-tailed networks, we first quantify the local transition probabilities that allow the infection to propagate through the high-degree hubs of the vertex expander.
\begin{lemma}\label{lm:fastDeimWhpSurvival2}
	There exist $p \in (0, 1)$ dependent on $\alpha, \lambda$ such that the following holds. Let $W$ be the expander found in Lemma \ref{lm:structuralLemmaExpander}. For all $u \sim_W v$, let $H$ be the subgraph of $W$ created by $u$, $v$, and a single edge between them. Then on the event $\{v \in I_t\}$, almost surely,
	
	$$ \pr\left(u \in I_{t + 1} \mid \sigma(\mathcal F_t, \overline{\mathcal F}_H)\right) \ge p.$$
\end{lemma}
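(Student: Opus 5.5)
We prove the bound by exhibiting one explicit scenario, built only from the clocks on $H$, that forces $u\in I_{t+1}$, and then lower-bounding its probability uniformly over everything in $\sigma(\mathcal F_t,\overline{\mathcal F}_H)$. By the strong Markov property at time $t$ and the memorylessness of the Poisson clocks, the clocks on $H$ after time $t$ are fresh rate-$1$, rate-$\alpha$ and rate-$\la$ processes. They are independent of $\mathcal F_t$ and, by definition, of $\overline{\mathcal F}_H$. So it suffices to find an event $\mathcal E$ measurable with respect to the $H$-clocks on $(t,t+1]$ such that $\mathcal E\cap\{v\in I_t\}\subseteq\{u\in I_{t+1}\}$, whatever happens outside $H$, and then set $p:=\P(\mathcal E)$. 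As in Lemma~\ref{lm:fastDeimWhpSurvival}, we use ghost infections: the state of $u$ is otherwise unknown, and infections of $u$ from other neighbours only help or harm it in ways the ghost-infection notion absorbs.

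We split into cases according to the state of $u$ at time $t$, and use one event that handles all three cases. Let $\mathcal E$ be the intersection of the following: the recovery clock at $v$ does not ring in $(t,t+1/2]$; the deimmunization clock at $u$ rings in $(t,t+1/4]$; the infection clock on $\overrightarrow{vu}$ rings in $(t+1/4,t+1/2]$; and the recovery clock at $u$ does not ring in $(t,t+1]$. On $\mathcal E\cap\{v\in I_t\}$, $v$ stays infected on $[t,t+1/2]$.

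If $u\in R_t$, then $u$ becomes susceptible by $t+1/4$, unless it was already so. It is then infected at the latest at the ring of $\overrightarrow{vu}$ in $(t+1/4,t+1/2]$, possibly earlier via another neighbour; this is the ghost-infection point. Since $u$ has no recovery ring in $(t,t+1]$, it remains infected through $t+1$.

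If $u\in S_t$ or $u\in I_t$, the same reasoning applies. The deimmunization ring is harmless, since it acts only on recovered vertices. The crucial observation is that once $u$ is infected, it can leave $I$ only through its own recovery clock, which is silent on $\mathcal E$.

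The probability of $\mathcal E$ is a product over independent clocks:
$$p=e^{-1/2}\,(1-e^{-\alpha/4})\,(1-e^{-\la/4})\,e^{-1},$$
which is positive and depends only on $\alpha,\la$. This yields the claim almost surely on $\{v\in I_t\}$.

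The only delicate step is the measurability and independence claim. We must check that the event $\{u\in I_{t+1}\}$ is implied by $\mathcal E$ pathwise, for every realization of the external clocks and every initial configuration. This is precisely why we demand that $u$'s own recovery clock be silent on the whole window and do not rely on the external dynamics at all. Once that pathwise inclusion is checked case by case, the conditional bound follows immediately from the independence of the $H$-clocks from $\sigma(\mathcal F_t,\overline{\mathcal F}_H)$.
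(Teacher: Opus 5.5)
Your proof is correct and follows the same route as the paper's sketched argument: keep $v$ infected, force a transmission along $\overrightarrow{vu}$ whatever state $u$ is in at time $t$, and keep $u$ from recovering, using only the $H$-clocks on $(t,t+1]$, which are independent of the conditioning. Your version is more explicit than the paper's sketch. Placing the deimmunization ring at $u$ before the $\overrightarrow{vu}$ ring, and keeping $u$'s recovery clock silent on the whole window, is exactly what makes the pathwise inclusion $\mathcal E\cap\{v\in I_t\}\subseteq\{u\in I_{t+1}\}$ hold despite the non-monotonicity of SIRS.
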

The proof of this is rather standard: we calculate the probability that $v$ is infected throughout $[t, t+1]$, bound the probability that there is an infection being sent from $v$ to $u$ within $[t, t+1]$ (no matter which state $u$ is in at time $t$), then bound the probability that $u$ stays infected until time $(t + 1)$. 

From here, it is rather straightforward to prove Theorem \ref{thm:noSubcritPhaseInHeavyTail}. Let $p$ be chosen according to Lemma \ref{lm:fastDeimWhpSurvival2}, and $K := \frac{3}{p} - 1 > 0$. From there, let $\beta, \gamma, K', W_0$ be chosen according to Lemma \ref{lm:structuralLemmaExpander}. Finally, let $I^0_t := I_t \cap W_0$. Then Theorem \ref{thm:noSubcritPhaseInHeavyTail} follows directly from the following analogue of Lemma \ref{lm:expanderIncreasesInfected1}.

\begin{lemma}
	There exists a constant $C' = C'(\lambda, \alpha, \mu) > 0$ such that for all integers $a \in (0, \beta\gamma n)$,
	$$\pr\left( |I^0_{t + 1}| \le 2a \ \bigg\vert \ |I_t^0| = a \right) \le 2\exp\left( - \dfrac{a^2}{C' n} \right).$$
\end{lemma}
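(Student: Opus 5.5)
I would mirror the proof of Lemma \ref{lm:expanderIncreasesInfected1}, with Lemma \ref{lm:fastDeimWhpSurvival2} in place of Lemma \ref{lm:fastDeimWhpSurvival} and the one-step vertex expansion of $W_0$ (Lemma \ref{lm:structuralLemmaExpander}) in place of the $R$-step embedded expansion. The quantity to track is the set $\mathcal X$ of vertices $u\in W_0$ that are infected at time $t+1$ because of a ghost-infection (Definition of ghost infection) coming from some neighbour $v\sim_{W_0}u$ with $v\in I^0_t$, using only the clocks on $u$, $v$ and the edge $uv$ during $(t,t+1]$. Clearly $|\mathcal X|\le |I^0_{t+1}|$, so it suffices to show that $|\mathcal X|>2a$ with the stated probability.

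\textbf{Lower bound on the mean.} Condition on $\mathcal F_t$ and on $|I^0_t|=a$. For each $u\in N(I^0_t,1)\cap W_0$, choose one neighbour $v_u\in I^0_t$; if $u$ is itself in $I^0_t$, take any $W_0$-neighbour (it exists since $\deg_{W_0}\ge K$). Lemma \ref{lm:fastDeimWhpSurvival2}, applied with $H=\{u,v_u\}$ and the complementary clocks absorbed into $\overline{\mathcal F}_H$, gives $\pr(u\in I_{t+1}\mid\cdot)\ge p$ uniformly in the outside dynamics. Since $a\le\beta\gamma n\le\beta|W_0|$, the expander property with $K=3/p-1$ yields
\[
\E\bigl(|\mathcal X|\,\big|\,\mathcal F_t\bigr)\ge p(1+K)a=3a.
\]

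\textbf{Concentration.} Let $G_t$ be the subgraph of $W_0$ induced on $N(I^0_t,1)\cap W_0$. It has $a'\le a(1+K')$ vertices, because degrees in $W_0$ are at most $K'$. Enumerate its vertices $w_1,\dots,w_{a'}$ and reveal, one vertex at a time, the clocks on $w_i$ together with its edges to $w_1,\dots,w_{i-1}$ during $(t,t+1]$. All other clocks are placed in $\overline{\mathcal F}_{G_t}$. This defines a Doob martingale $\mathcal X_i$ with $\mathcal X_0\ge 3a$ and $\mathcal X_{a'}=|\mathcal X|$. Revealing one vertex's clocks changes the infection status at time $t+1$ only of that vertex and its at most $K'$ neighbours, so the martingale increments are bounded by $K'+1$. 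Azuma's inequality with deviation $s=a$ then gives
\[
\pr\bigl(|\mathcal X|\le 2a\bigr)\le 2\exp\bigl(-a^2/(2a'(K'+1)^2)\bigr)\le 2\exp(-a/C'),
\]
and this is at most $2\exp(-a^2/(C'n))$ because $a\le n$.

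\textbf{Main obstacle.} The step that needs the most care is justifying the bounded-difference property. Ghost-infections are defined through the states of $u$ and $v$ alone, and that is what keeps the influence of each revealed clock set local to a vertex and its neighbourhood. Here the SIRS non-monotonicity matters: an outside infection could make $u$ immune and block the transmission. Lemma \ref{lm:fastDeimWhpSurvival2} is stated conditionally on $\overline{\mathcal F}_H$ precisely to handle this, but I need to check that conditioning on the already-revealed $\mathcal G_i$ still leaves the pair $(u,v_u)$ with success probability at least $p$. If that check fails, I would instead define $\mathcal X$ through an explicit event on the pair's own clocks: $v$ does not recover during $[t,t+1]$, $u$ completes recovery and deimmunization, the edge rings, and $u$ then holds its infection until $t+1$. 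This event has probability at least $p$ and is independent across disjoint pairs, which makes both the mean bound and the bounded differences immediate.
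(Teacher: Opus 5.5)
Your proposal is correct and follows the paper's proof. It uses the same one-step ghost-infection set $\mathcal X$, the same mean bound $p(1+K)a=3a$ from Lemma~\ref{lm:fastDeimWhpSurvival2} and the $(\beta,K)$-expansion of Lemma~\ref{lm:structuralLemmaExpander} with $K=3/p-1$, and the same vertex-exposure Doob martingale with Azuma at deviation $s=a$. Counting only $a'\le(1+K')a$ exposure steps (the paper uses $|W_0|\le n$) even gives the stronger bound $2e^{-a/C'}$. Your fallback of defining $\mathcal X$ by an event on the pair's own clocks is what makes the bounded-difference step rigorous; for instance: $v$'s recovery clock is silent on $[t,t+1]$, and $u$'s last recovery ring in $[t,t+1]$ is followed by a deimmunization ring of $u$ and then a $v\to u$ infection ring.

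There is one small slip, for $u\in I^0_t$ with no infected $W_0$-neighbour. There Lemma~\ref{lm:fastDeimWhpSurvival2} cannot be applied with an arbitrary, possibly non-infected neighbour $v_u$. Instead use that $u$'s own recovery clock does not ring on $[t,t+1]$, which has probability $e^{-1}\ge p$ after shrinking $p$ if necessary.
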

The proof is essentially the same as that of Lemma \ref{lm:expanderIncreasesInfected1}: define $\mathcal X$ to be the set of vertices $u \in W_0$ such that there exists $v \in I_t^0$, $v \sim u$, and a ghost-infection from $v$ at time $t$ to $u$ at time $(t + 1)$. We construct $(\mathcal X_i)_{0 \le i \le a'}$ as before and use Azuma's inequality to obtain
\begin{align*}
	\P\left(|\mathcal X_{a'} - \mathcal X_0| \ge s \right) &\le 2\exp\left( - \dfrac{s^2}{2|W_0| K'^4}\right) \le 2 \exp\left( - \dfrac{s^2}{2 n K'^4 }\right), \\
	|\mathcal X_0| &\ge p \left| N(I_t^0, 1) \cap W_0 \right| \ge p (1 + K) |I_0^t| = 3a.
\end{align*}
Substituting $s = a$ into the above completes our proof.

	\section{Strong survival for large trees}\label{sec:mainThmOfStrongSurvival}
In this section, we prove Theorem \ref{thm:mainThmOfStrongSurvival}.
We adapt a strategy in \cite{p92} and \cite{huangdurrett20}. We establish that the root of a Galton-Watson tree is re-infected infinitely often by showing that the survival probability function $\phi(t)$ satisfies $\liminf_{t\to\infty} \phi(t) > 0$. We define $\phi(t)$ as the infimum probability that the root is infected within a look-back window of $12T$, where $T$ is the survival time-scale of a star graph. By identifying a large generation $r$ and a vertex degree $n$ that satisfy the supercriticality condition $\lambda > (\hat{\alpha}d - 1)^{-1}$, we ensure that the expected number of infected descendants is high enough to sustain a recursive "upward drift" of infection back to the root.  We employ a functional recursive bound to bypass the conditional dependencies that arise when the infection states of descendants are revealed, ensuring the underlying law of the tree remains intact for our reinfection analysis. We prove that for a chosen $n$ and $r$, an infected descendant in the $r$-th generation persists long enough to make multiple re-infection attempts toward the root.This allows us to show that the probability of these re-infection chains failing is sufficiently small to guarantee that the endemic state persists indefinitely at the root.

Let $\Omega_{\rho} \subseteq \Omega$ be the set of all configurations such that $\rho$ is infected. For a parameter $T$ to be specified later, we define the "worst-case" probability of the root being infected in a $12T$ time window to be
\begin{align*}
	\phi(t) := \begin{cases}
		\inf_{\mathcal C_0 \in \Omega_{\rho}} \P\left(\exists s \in [t - 12T, t]: \rho \in I_s \ \bigg\vert \ |N(\rho)| = n\right) &  \text{ if } t > 12T, \\
		1 &  \text{ if }  t \le 12T.
	\end{cases}
\end{align*}
Our goal is to apply Lemma \ref{lm:PemLem24}, with appropriately chosen (large) $n$ and $T = \Theta_{\alpha}\left( (\wla^2 n)^{\alpha}  \right)$ to conclude that $$ \liminf_{t \rightarrow \infty} \phi(t) > 0.$$

Our choice of the function $H(x)$ will have the form $$H(x) := Ax \wedge \varepsilon',$$
where $A > 1$. We will show that it is possible to choose such $A$ in the setting of Theorem \ref{thm:mainThmOfStrongSurvival}.

\begin{lemma}\label{lm:probInfectFarWithinShort}
	Let $v_0, v_1, \dots, v_r$ be a path in any graph $G$, we run the SIRS process on $G$, and suppose $v_0$ is infected at time $0$. Then the probability that $v_r$ is infected by time $3r$ is at least $$( \wal \wla)^r (1 - \exp(-\gamma r))  \ge \dfrac{(\wal \wla)^r}{2},$$
	for $\gamma = 17/24 - \log 2 > 0$ and $r \ge \frac{\log 2}{\gamma}$. 
\end{lemma}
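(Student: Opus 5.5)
The plan is to push the infection down the path one edge at a time, using the ghost-infection idea of Section~\ref{sec:longSurvivalAlmostSIS} together with the strong Markov property. Let $\sigma_0 := 0$. For $i \ge 0$, let $\sigma_{i+1}$ be the first time after $\sigma_i$ at which $v_{i+1}$ is infected, provided this happens before $v_i$ first recovers after $\sigma_i$; otherwise set $\sigma_{i+1} := \infty$. The target event contains $\{\sigma_r \le 3r\}$, so it suffices to bound $\P(\sigma_r < \infty,\ \sigma_r \le 3r)$ from below.

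\textbf{Single step.} Condition on $\mathcal F_{\sigma_i}$ with $\sigma_i < \infty$, so that $v_i$ is infected at time $\sigma_i$, and split on the state of $v_{i+1}$.
\begin{itemize}
\item If $v_{i+1}$ is already infected, set $\sigma_{i+1} = \sigma_i$; this step succeeds at no cost.
\item If $v_{i+1}$ is susceptible, it suffices that the infection clock on $\overrightarrow{v_iv_{i+1}}$ rings before the recovery clock of $v_i$.
\item If $v_{i+1}$ is immune, it suffices that the deimmunization clock at $v_{i+1}$ rings before $v_i$ recovers, and afterwards (by memorylessness) the infection clock rings before $v_i$ recovers.
\end{itemize}
As with ghost infections, it does not matter whether $v_{i+1}$ is actually infected by $v_i$ or by another neighbour. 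Hence the step succeeds with conditional probability at least $\wal\wla$, uniformly over $\mathcal F_{\sigma_i}$.

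\textbf{Timing of a step.} On the success event, $\sigma_{i+1} - \sigma_i$ is at most $E_1 + E_2$. Here $E_1$ is the time of the race between rates $\alpha$ and $1$, so $E_1 \sim \Exp(\alpha+1)$. Likewise $E_2 \sim \Exp(\la+1)$ comes from the race between rates $\la$ and $1$. The winner of an exponential race is independent of the race time. So conditioned on success, $E_1$ and $E_2$ keep these laws and are stochastically dominated by independent $\Exp(1)$ variables.

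\textbf{Chaining.} Iterate the single-step bound via the strong Markov property at $\sigma_1,\dots,\sigma_{r-1}$. This gives
\[
\P(\sigma_r \le 3r) \ \ge\ (\wal\wla)^r\, \P\big(\mathrm{Gam}(2r,1) \le 3r\big).
\]
A Chernoff bound gives $\P(\mathrm{Gam}(2r,1) > 3r) \le \exp\!\big(-r(1 - 2\log\tfrac32)\big)$. Since $1 - 2\log\tfrac32 \approx 0.19 > \gamma = 17/24 - \log 2$, this is at most $e^{-\gamma r}$. Finally, $r \ge \log 2/\gamma$ gives $1 - e^{-\gamma r} \ge 1/2$, which is the stated bound.

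The main obstacle is making the conditioning rigorous. The success events and the step durations must combine into a product of probabilities times a Gamma tail, even though the states of other vertices in $G$ can change while a step is in progress. The plan is to handle this exactly as Lemma \ref{lm:fastDeimWhpSurvival} does. Work conditionally on $\sigma(\mathcal F_{\sigma_i}, \overline{\mathcal F}_{H_i})$, where $H_i$ is the single edge $v_iv_{i+1}$. Use only the three clocks on $v_i$, $v_{i+1}$ and that edge, which are independent of everything outside $H_i$. Then the lower bound on the success probability and the domination of the step duration both hold almost surely, whatever happens elsewhere in the graph.
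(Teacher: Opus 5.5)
Your proposal is correct and follows essentially the same route as the paper: each edge is crossed with probability at least $\wal\wla$ via the deimmunization and infection races against the recovery of $v_i$, the transfer time conditioned on success is hypoexponential, and a Chernoff bound on the sum gives the $3r$ window. Your version is more explicit than the paper's, which only says ``use a Chernoff bound'': you identify the conditional step time as $\Exp(\alpha+1)+\Exp(\la+1)\preceq \mathrm{Gam}(2,1)$ and obtain the exponent $1-2\log\frac32>\gamma$. One point to phrase carefully when chaining is the susceptible and already-infected cases, where the success probability exceeds $\wal\wla$; there you should iterate the bound $\P(\text{step succeeds},\ \text{duration}\le x\mid\mathcal F_{\sigma_i})\ge \wal\wla\,\P(\mathrm{Gam}(2,1)\le x)$ rather than claim an exact product.
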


\begin{proof}
	The probability that $v_{i-1}$ infect $v_i$ before it is cured is at least
	\begin{equation*}
		\P(\Exp(\alpha) + \Exp(\la) \le \Exp(1)) = \wal \wla.
	\end{equation*}
	Condition on this transfer of infection, the transferring time hypoexponentially distributed $\sim \text{Hypo}(\la, \alpha)$, i.e. having density
	$$ \mathbf{1}_{\{\omega \ge 0\}} \dfrac{e^{-(\la + 1)\omega} - e^{-(\alpha + 1)\omega}}{\frac{1}{\la + 1} - \frac{1}{\alpha + 1}}. $$
	
	We can then use a Chernoff bound on the sum of i.i.d. $ \text{Hypo}(\la, \alpha)$-distributed random variables.
\end{proof}

Using Lemma \ref{lm:elementaryExponentialAlphaLambda1}, we can prove that the concept of $\varepsilon$-good rounds still applies to the setting of GW trees. We define the rounds in terms of the SIRS process on the root $\rho$: let $0 = \tau_1 < \tau_2 < \dots$ be the (random) times at which the root $\rho$ gets reinfected, \textit{before the infection disappears from the neighborhood $\{\rho\} \cup N(\rho)$}. For the $i$-th round, denote $\tau_i^R, \tau_i^S$ to be the time that the root $\rho$ becomes recovered and susceptible, respectively. Moreover, let $p_{v, R \mapsto I}(x)$ be the infimum probability that at any time $t$, conditioned on the fact $v$ is immune at time $t$, and the root $\rho$ is infected throughout $[t, t+x]$, the aformentioned vertex $v$ is infected at time $(t + x)$. The infimum is taken over all possible configurations $\Omega := \{S, I, R\}^{V(\mathcal T)}$, so conditioned on $\mathcal T$, $p_{v, R \mapsto I}(x)$ is deterministic, thus $p_{v, R \mapsto I}(x) \in \mathcal F_0$. Formally, $$p_{v, R \mapsto I}(x) := \inf_{t \ge 0}\inf_{X_t \in \Omega} \P\left(v \in I_{t + x} \mid v \in R_t, \rho \in I_{s}\ \forall s\in [t, t+x] \right). $$

We then define 
$$ p_{R \mapsto I}(x) :=  \inf_{v \sim \rho} p_{v, R \mapsto I}(x). $$

We define $p_{I \mapsto I}(x), p_{S \mapsto I}(x)$ similarly. With the lemma above, we have the following uniform lower bound for them when $x \ge \varepsilon$. 

\begin{corollary}
	Given $\varepsilon > 0$, then there exists a constant $C = C(\varepsilon, \alpha) > 0$ such that the following holds: a.s., for $x \ge \varepsilon$, 
	$$p_{R \mapsto I}(x), p_{I \mapsto I}(x), p_{S \mapsto I}(x) \ge C\la. $$
\end{corollary}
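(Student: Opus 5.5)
The plan is to bound each of the three quantities separately by a single explicit clock scenario. Throughout, we condition on the root being infected on $[t,t+x]$. Every scenario is built only from the Poisson clocks at $v$ and on the edge $\overrightarrow{\rho v}$, so it is independent of the rest of the configuration. Ghost infection covers the case where $v$ is infected by some other neighbor. Since $x \ge \varepsilon$, it suffices to work on the sub-window $[t+x-\varepsilon, t+x]$.

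\textbf{The case $R \mapsto I$.} At the start of this window, $v$ is in some state. We lower bound the probability of the following event $E$, which forces $v \in I_{t+x}$ no matter which state $v$ starts in:
\begin{itemize}
\item the deimmunization clock of $v$ rings in $[t+x-\varepsilon, t+x-\varepsilon/2]$;
\item the recovery clock of $v$ does not ring in $[t+x-\varepsilon/2,t+x]$;
\item the infection clock $N_{\overrightarrow{\rho v}}$ rings at least once in $[t+x-\varepsilon/2, t+x]$.
\end{itemize}
To see that $E$ forces infection, note two things. First, if $v$ is already infected at time $t+x-\varepsilon/2$, then no recovery ring keeps it infected. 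Second, if $v$ is susceptible at that time, the infection ring infects it (or a ghost infection already did), and it stays infected. The one subtlety is the case where $v$ is immune but recovers again after the deimmunization ring. To handle this, we instead require the recovery clock not to ring on the whole window $[t+x-\varepsilon,t+x]$, and the deimmunization ring to precede the first infection ring. Then $v$ passes $R \to S \to I$ and stays infected.

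The probability of $E$ is at least
$$ e^{-\varepsilon}\,\bigl(1-e^{-\alpha\varepsilon/2}\bigr)\,\bigl(1-e^{-\la\varepsilon/2}\bigr). $$
For $\la \le 1$ we have $1-e^{-\la\varepsilon/2}\ge c\,\la\varepsilon$, and for $\la\ge1$ this factor is bounded below by a constant. So the bound is $\ge C\la$ when $\la$ is bounded. For large $\la$, the statement $\ge C\la$ cannot literally hold, because the left side is a probability at most $1$. I read the corollary as intended for $\la$ in a bounded range, or as $C(\la\wedge 1)$, and I will state it that way. In any case $C$ depends only on $\varepsilon,\alpha$.

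\textbf{The cases $S\mapsto I$ and $I \mapsto I$.} The same event $E$ works for both: deimmunization rings are harmless for a susceptible vertex, and the infected case is covered by the argument above. The main obstacle is the worst case $v\in I$ early in the window with a recovery occurring. Requiring no recovery on the entire window avoids it, at the cost of only the factor $e^{-\varepsilon}$.

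As a cross-check, Lemma~\ref{lm:elementaryExponentialAlphaLambda1} gives the exact probability of $\Exp(\alpha)+\Exp(\la)<\varepsilon<\Exp(\alpha)+\Exp(1)$, which is of order $\la$ as $\la\to0$. This matches the scenario computed above. Finally, since all the clocks used are at $v$ or on $\overrightarrow{\rho v}$, the bound holds a.s.\ uniformly over configurations and over $v\sim\rho$, so the infimum defining $p_{\cdot\mapsto I}(x)$ inherits it.
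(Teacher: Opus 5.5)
Your proof is correct, and you are right about the statement itself. Each $p_{\cdot\mapsto I}(x)$ is a probability, so a bound $C\la$ with $C=C(\varepsilon,\alpha)$ must fail once $\la>1/C$. The paper's own closed-form bound from Lemma~\ref{lm:elementaryExponentialAlphaLambda1} confirms this: it tends to the constant $(\alpha e^{-x}-e^{-\alpha x})/(\alpha-1)$ as $\la\to\infty$. What either argument actually proves is a bound of the form $C(\la\wedge 1)$.

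Your route differs from the paper's.
\begin{itemize}
\item The paper starts its scenario at time $t$ from the given state. For $v\in R_t$ and $x\in[\varepsilon,1]$ it bounds the probability below by $\P(D_v+H'_v\le x\le D_v+Q_v)$, with $D_v\sim\Exp(\alpha)$, $H'_v\sim\Exp(\la)$, $Q_v\sim\Exp(1)$. It evaluates this exactly via Lemma~\ref{lm:elementaryExponentialAlphaLambda1} and declares the $S$ and $I$ starting states analogous. It then handles $x>1$ by conditioning on the state of $v$ at time $t+x-1$ and applying the unit-time bounds.
\item You instead place a single forcing event on the final window $[t+x-\varepsilon,t+x]$: a deimmunization ring in the first half, an $\overrightarrow{\rho v}$ ring in the second half, and no recovery ring anywhere in the window. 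This puts $v$ in $I_{t+x}$ whatever its state at $t+x-\varepsilon$. It treats all three starting states and all $x\ge\varepsilon$ at once, with no case analysis and no iteration, and it makes the $\la\wedge 1$ dependence transparent.
\end{itemize}
The paper's computation buys an explicit formula in $x,\alpha,\la$. For the mere existence of a constant, your argument is simpler.

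Independence from the conditioning is exactly as you say. Given $\rho\in I_t$, the event that $\rho$ stays infected on $[t,t+x]$ depends only on $\rho$'s recovery clock, which is disjoint from the clocks you use.

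Two fixes for the write-up. First, state the final event directly, rather than first giving the version that excludes recovery only on the second half; that version fails, as you noticed. Second, the ordering ``deimmunization before the first infection ring'' is automatic from the disjoint half-windows, so it is not an extra condition. With those changes, $e^{-\varepsilon}(1-e^{-\alpha\varepsilon/2})(1-e^{-\la\varepsilon/2})$ is exactly the probability of your event.
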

\begin{proof}
	Wlog, suppose $\varepsilon < 1$ and $x \in [\varepsilon, 1]$. Let $D_v, H_v, Q_v$ denotes the amount of time it takes (starting from $t$) for $v$ to lose immunity, then infected, and then recover (assuming that these happen). Then $D_v \sim \Exp(\alpha)$, $Q_v \sim \Exp(1)$, and $H_v \preceq H'_v \sim \Exp(\la)$ (taking into account potentially infected descendants of $v$), so by Lemma \ref{lm:elementaryExponentialAlphaLambda1},
	\begin{align*}
		&\P\left(v \in I_{t + x} \mid v \in R_t, \rho \in I_{s}\ \forall s\in [t, t+x] \right) \\
		\ge&\ \P\left( D_v + H'_v \le x \le D_v + Q_v \right) \\
		=&\ \dfrac{\alpha e^{-x} - e^{-\alpha x}}{\alpha - 1} - \dfrac{\alpha e^{-(\la + 1) x } - e^{-\alpha x}}{\alpha - (\la + 1)} \ge C\la,
	\end{align*}
	for some $C = C(\varepsilon, \alpha) > 0$. Taking the infimum over all states of the tree and then all $v \sim \rho$, we ge the desired lower bound for $p_{R \mapsto I}(x)$. The proof for $p_{I \mapsto I}(x), p_{S \mapsto I}(x)$ is completely analogous. For $x > 1$, we simply condition on the status of the leaf after $(x - 1)$ units of time and then apply these bounds for the next unit of time.
\end{proof}

We also prove that there are always enough non-immune nodes in the first generation (for convenience, since we are considering the neighbors of the root, we will call the nodes in the first generation \textit{leaves}). Recall \cite[Lemma 4.4]{lam2024optimal}, which makes no assumption on how the leaves are infected, and relies completely on the internal (recovery and deimmunization) clocks of each leaf. This means its conclusion also holds for the star neighborhood $N(\rho)$ of the root.




\begin{lemma}\cite[Lemma 4.4]{lam2024optimal}
	Given any $b \in (0, 1]$, for all stopping time $T$ with respect to $(\mathcal F_t)_{t \ge 0}$, the following holds: conditioned on $\mathcal F_T$, on the event $\{|R_T| \le (1 - b)n\}$, with overwhelmingly high probability, at all times $t \le \exp(\Theta_{\alpha, b}(n))$, the number of non-immune leaves of $N(\rho)$ is of order $\Theta_{\alpha, b}(n)$. More precisely, setting $B:= \min\left(\frac{\alpha}{16(\alpha + 1)^2}, \frac{e^{-\alpha}}{8} \right) \in \left(0, \frac{1}{64}\right)$, then 
	\begin{align*}
		&\P\left(|R_{T + t}| \le (1 - Bb)n, \ \forall 0 \le t \le e^{4B^2 b n} \ \bigg\vert\ \mathcal F_T, |N(\rho)| = n, |R_T | \le ( 1- b)n \right) \\
		\ge&\ 1_{\{|N(\rho)| = n\}} 1_{\{|\mathcal R_T| \le (1 - b)n\}}\left(1 - 2e^{-4B^2b n}\right). 
	\end{align*}
\end{lemma}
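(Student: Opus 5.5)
The statement only involves the recovery and deimmunization clocks of the leaves of the star $N(\rho)$, so I would prove it leaf by leaf and ignore infections entirely. Each leaf $v$ moves $I \to R$ at rate $1$ and $R \to S$ at rate $\alpha$. Infection events can only turn non-immune leaves into other non-immune leaves, except that they feed the rate-$1$ flow into $R$. The plan is therefore to dominate $|R_{T+t}|$ from above by a process in which every non-immune leaf is pessimistically treated as infected at all times.

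\textbf{Step 1 (coupling).} Take the $n$ leaves and attach to each an independent two-state chain on $\{\text{non-immune}, R\}$ with transition rates $1$ and $\alpha$. Using the graphical construction, I would check that the true immune set at every time is contained in the set of $R$-leaves of this dominating chain, provided the two agree at time $T$. Two facts give this: a recovery ring at a non-immune leaf of the true process can only occur if the leaf is infected, and deimmunization rings act identically in both. By the strong Markov property at $T$, conditional on $\mathcal F_T$ the dominating leaves evolve independently, and at least $bn$ of them start non-immune.

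\textbf{Step 2 (fixed-time bound).} For a single two-state leaf started non-immune, the probability of being non-immune at time $s$ is
$$\tfrac{\alpha}{\alpha+1} + \tfrac{1}{\alpha+1}e^{-(\alpha+1)s} \ge \tfrac{\alpha}{\alpha+1}.$$
For a leaf started in $R$, the probability of being non-immune at time $s$ is at least $\tfrac{\alpha}{\alpha+1}\bigl(1-e^{-(\alpha+1)s}\bigr)$, which is positive for $s$ bounded away from $0$. For small $s$, the $bn$ initially non-immune leaves stay non-immune with probability $e^{-s}$ each, which is at least $e^{-\alpha}$-type constants for short windows; this is where the $e^{-\alpha}/8$ in $B$ comes from. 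Combining the two regimes, the expected number of non-immune leaves at any fixed time is at least $2Bbn$. Hoeffding's inequality for independent indicators then gives a deviation of order $Bbn$ with probability at most $2e^{-cB^2b^2n}$; the constants need to be arranged to match the stated $e^{-4B^2bn}$.

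\textbf{Step 3 (all times up to $e^{4B^2bn}$).} This uniformity over times is the main obstacle. I would discretize $[0, e^{4B^2bn}]$ into intervals of length $\delta = \delta(\alpha)$ small. Within one interval, a leaf that is non-immune at the left endpoint stays non-immune throughout unless its rate-$1$ clock rings. The number of leaves whose recovery clock rings in a $\delta$-window is stochastically at most $\mathrm{Bin}(n,\delta)$, and a Chernoff bound shows it is at most $2\delta n \ll Bbn$ except with probability exponentially small in $n$. So it suffices to have at least $2Bbn$ non-immune leaves at each grid point, with only $Bbn$ lost inside each window. A union bound over the roughly $e^{4B^2bn}/\delta$ grid points costs a factor that must be beaten by the per-point bound. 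I would therefore get an exponent of the form $e^{-8B^2bn}$ at each grid point, using the room in $B \le 1/64$, so that the union bound leaves $2e^{-4B^2bn}$. If the constants do not close directly, the fallback is to restart the argument every unit of time, using that $B$ was chosen conservatively enough for the Chernoff exponents to absorb the union bound.
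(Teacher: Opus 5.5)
Your Step 1 is exactly what the paper relies on. The paper does not reprove this bound: it cites \cite[Lemma 4.4]{lam2024optimal} and remarks that the proof uses only the recovery and deimmunization clocks of the leaves, so it transfers to $N(\rho)$ inside the tree. Your coupling is a correct rigorous version of that remark: a non-immune dominating leaf becomes immune at every recovery ring, an immune one becomes non-immune at every deimmunization ring, and infections only move a true leaf within the non-immune class. With the strong Markov property at $T$, this reduces everything to $n$ independent two-state chains, at least $bn$ of which start non-immune. Steps 2--3 then reprove the star-graph lemma itself. That architecture is sound: fixed-time concentration, a mesh, control of losses inside a window, and a union bound. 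However, the estimates as you wrote them do not give the stated rate.

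\textbf{The exponent in Step 2.} Hoeffding over all $n$ leaves with a deviation of order $Bbn$ gives $e^{-cB^2b^2n}$. An exponent quadratic in $b$ falls below $4B^2bn$ once $b$ is small, so it can neither give the stated rate nor absorb the union bound over $\approx e^{4B^2bn}/\delta$ grid points. Tuning constants or using $B\le 1/64$ does not change the power of $b$.

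The fix is to restrict to the set $U$ of the $m\ge bn$ leaves that are non-immune at time $T$. By your own formula, each is non-immune at time $T+s$ with probability at least $\frac{\alpha}{\alpha+1}\ge 16B$ for \emph{every} $s\ge 0$. So the small-$s$ regime and the $e^{-\alpha}$ heuristic are unnecessary. Hoeffding over these $m$ independent indicators gives
\begin{equation*}
\P\bigl(\#\{v\in U:\ v \text{ non-immune at } T+s\}<2Bbn\bigr)\le e^{-2m(14B)^2}\le e^{-392B^2bn},
\end{equation*}
which is linear in $b$. Note also that a mean of only $2Bbn$, as you state, would not keep $2Bbn$ non-immune leaves at grid points with high probability. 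What is actually used is the margin between $16B$ and $2B$.

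\textbf{The mesh in Step 3.} The mesh must depend on $b$ as well as $\alpha$, since you need $\delta n\ll Bbn$. Moreover, the Chernoff bound for $\mathrm{Bin}(n,\delta)\le 2\delta n$ has exponent of order $\delta n\lesssim Bbn/20$, which does not beat $4B^2bn$ when $B$ is near $1/64$. Bound the number of leaves with a recovery ring in a window by the large-deviation form instead: with $\delta=Bb\,e^{-10}$,
\begin{equation*}
\P\bigl(\mathrm{Bin}(n,\delta)\ge Bbn\bigr)\le \bigl(e\delta/(Bb)\bigr)^{Bbn}=e^{-9Bbn}\le e^{-576B^2bn}.
\end{equation*}

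On the intersection of these events, every window starts with at least $2Bbn$ non-immune dominating leaves in $U$ and loses fewer than $Bbn$ of them. Hence $|R_{T+t}|\le(1-Bb)n$ throughout. The union bound over $O(e^{4B^2bn}/\delta)$ windows then gives failure probability at most $2e^{-4B^2bn}$ for $n$ large.
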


From our earlier paper on SIRS on star graphs \cite[Section 6.2]{lam2024optimal}, we can also prove the following lemma. 
\begin{lemma}\label{lm:probOfSurvivalSRounds}
	Consider the graph $G$ containing $\rho$ as a vertex with degree $n$. Suppose $0 = \tau_1 < \tau_2 < \dots$ are the beginning of the rounds of $\rho$ (so we start out with $\rho$ being infected).
	
	There exists $c, C_0, C_1, C_2, \varepsilon > 0$ only dependent on $\alpha$ such that the following holds: let $1 \le m_1 < m_2 \le \dots$ be the (random) indices of all $\varepsilon$-good rounds. For $\la > 0$, sufficiently large $n \ge C_0 \cdot \wla^{-2}$, and $2 \le T \le c(\wla^2 n)^{\alpha}$, where $\wla := \la/(\la + 1)$, we have 
	$$ C_1 \dfrac{T}{(\wla^2 n)^{\alpha}}\le \P\left(\tau_{T} = \infty \mid m_1 = 1 \right) \le \P\left(\tau_{m_T} = \infty \mid m_1 = 1 \right) \le C_2\dfrac{T}{(\wla^2 n)^{\alpha}} < 1.$$
\end{lemma}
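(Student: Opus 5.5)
Since the statement is imported from the star-graph analysis in \cite[Section 6.2]{lam2024optimal}, the plan is to rerun that argument with one change. The key observation is that every estimate used there depends only on the clocks of $\rho$ and of its $n$ leaves, plus lower bounds on the probability that a leaf is (re)infected while $\rho$ is infected. In the present setting a leaf may also receive infection from its own descendants (or other vertices of $G$). That is only helpful for the lower bound on survival, i.e.\ the upper bound on $\P(\tau_{m_T}=\infty)$. For the other direction we will use the uniform quantities $p_{S\mapsto I},p_{I\mapsto I},p_{R\mapsto I}$ and ghost infections, so that all bounds hold conditionally on $\overline{\mathcal F}_H$ with $H$ the star around $\rho$. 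The middle inequality $\P(\tau_T=\infty)\le\P(\tau_{m_T}=\infty)$ is immediate, since $m_T\ge T$ and the events $\{\tau_k=\infty\}$ increase in $k$.

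\textbf{Step 1 (a single round).} Define an $\varepsilon$-good round as in \cite{lam2024optimal}: at its start at least $\varepsilon n$ leaves are non-immune. By the preceding lemma (\cite[Lemma 4.4]{lam2024optimal}) this holds for all rounds up to time $e^{\Theta(n)}$ with overwhelming probability. In a good round $\rho$ stays infected for an $\Exp(1)$ time $Q$. During $Q$, each non-immune leaf becomes infected with probability roughly $\wla$ (ghost infection). After $\rho$ recovers it spends an $\Exp(\alpha)$ time $D$ immune. The round fails only if no infected leaf survives past $D$ and then infects $\rho$.
\begin{itemize}
\item \emph{Failure is unlikely.} The number of infected leaves at time $Q$ is at least a binomial of order $\wla\varepsilon n Q$. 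Each such leaf, after the immune period $D$, re-infects $\rho$ with probability about $\wla e^{-D}$. The failure probability is therefore roughly
$$\E\exp\bigl(-c\,\wla^2 n\,Q e^{-D}\bigr).$$
The dominant contribution comes from $Q\lesssim 1/(\wla^2 n)$, which has probability of order $(\wla^2 n)^{-1}$, and from $e^{-D}\lesssim (\wla^2n)^{-1}$, which has probability $(\wla^2 n)^{-\alpha}$.
\item \emph{Failure is not too unlikely.} For the matching lower bound, restrict to the event that $D\ge \log(\wla^2 n)$ and $Q$ is of order one, and use an upper bound on the number of infected leaves. Leaves infected from below remain bounded in number, since each is infected from outside at rate at most $\la$ times its degree and recovers at rate $1$; here we use $\alpha>1$ to absorb the $Q$-contribution. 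This yields a per-round failure probability in $[c_1(\wla^2n)^{-\alpha},\,c_2(\wla^2n)^{-\alpha}]$, conditionally on the past, once $n\ge C_0\wla^{-2}$.
\end{itemize}

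\textbf{Step 2 (many rounds).} Iterate over rounds via the strong Markov property at the times $\tau_{m_k}$. Summing the per-round failure probabilities gives $\P(\tau_{m_T}=\infty\mid m_1=1)\le C_2T(\wla^2n)^{-\alpha}$. Conversely, requiring failure at the first round in which it can occur, and bounding the survival of the remaining rounds by $1-C_2T(\wla^2n)^{-\alpha}\ge 1/2$, gives a lower bound of $C_1T(\wla^2n)^{-\alpha}$. The condition $T\le c(\wla^2n)^{\alpha}$ makes the upper bound $<1$.

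\textbf{Main obstacle.} I expect the lower bound, i.e.\ showing that failure has probability at least of order $(\wla^2n)^{-\alpha}$, to be the main difficulty. In a general graph $G$ the leaves can be re-infected from outside the star, which could make extinction at the root far less likely. I would handle this by showing that extinction of the star is still possible with comparable probability. Concretely, on an event of probability bounded below, the long immunity period $D$ of $\rho$ coincides with all leaves recovering before they are re-infected, and $\rho$ is then not re-infected. The difficulty is that the leaves' own subtrees can keep them infected, so I expect this to require an explicit comparison that controls the infection pressure coming from outside the star.
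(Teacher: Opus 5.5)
Your plan (rerun the star analysis of \cite[Section 6.2]{lam2024optimal} inside $G$, then sum per-round failure probabilities via the strong Markov property) is the route the paper intends. The paper's proof is only that citation plus the assertion that the star behaves as if standalone. Your middle inequality is fine. However, the per-round upper bound fails with your notion of $\varepsilon$-good round, which asks only for $\ge\varepsilon n$ non-immune leaves. Your own heuristic $\E\exp(-c\wla^2 nQe^{-D})$ equals, up to constants, $\E\min\{1,e^{D}/(\wla^2 n)\}$. This is of order $(\wla^2n)^{-1}$ for $\alpha>1$ and $(\wla^2n)^{-1}\log(\wla^2 n)$ for $\alpha=1$, and both exceed $(\wla^2n)^{-\alpha}$. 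So ``using $\alpha>1$ to absorb the $Q$-contribution'' runs the wrong way: absorption works only for $\alpha<1$, and the lemma does not restrict $\alpha$. This is not an artifact of the heuristic. From a start with a single infected leaf, which your definition allows, a short root infection period genuinely kills the star with probability much larger than $(\wla^2n)^{-\alpha}$.

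The missing ingredient is that goodness must include the root staying infected for time at least $\varepsilon$; this is why the paper later conditions on $\tau_1^R>\varepsilon$ in place of $m_1=1$. With that, ghost infections, the uniform bounds $p_{S\mapsto I},p_{I\mapsto I},p_{R\mapsto I}$ and \cite[Lemma 4.4]{lam2024optimal} give $\Omega(\wla n)$ infected leaves at $\tau^R$, whatever happens outside the star. These are the tools for this direction, and they are needed because outside infection is not automatically helpful in SIRS. Failure then forces $D\gtrsim\log(\wla^2 n)$, which has probability $O((\wla^2n)^{-\alpha})$. But with this definition not every round is good. Step 2 must therefore also control extinction during the non-good rounds between $\tau_{m_k}$ and $\tau_{m_{k+1}}$, using the leaves still infected from round $m_k$; summing over good rounds alone misses this.

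The lower bound $\P(\tau_T=\infty\mid m_1=1)\ge C_1T(\wla^2n)^{-\alpha}$ is the real gap, and it cannot be closed in the stated generality, so the ``explicit comparison'' you hope for does not exist. Your idea of bounding outside pressure through the leaves' degrees is already non-uniform, since the constants may depend only on $\alpha$. For a counterexample, attach to every neighbor of $\rho$ its own star with $N\gg n$ leaves. Each neighbor is then re-infected almost as soon as it loses immunity. It therefore alternates $\Exp(1)$ infected and $\Exp(\alpha)$ immune periods driven by its own clocks, essentially independently of $\rho$ and of the other neighbors. The set $\{\rho\}\cup N(\rho)$ can only become infection-free when all $n$ neighbors are uninfected at once. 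Starting, say, from stationarity of these alternations, this happens during the $O(T)$ time before $\tau_T$ with probability $Te^{-\Theta_\alpha(n)}$, which is $\ll C_1Tn^{-\alpha}\le C_1T(\wla^2n)^{-\alpha}$. So this half holds for the standalone star of \cite{lam2024optimal} but not for general $G$, and the paper's citation does not justify it either. Fortunately it is never used. Corollary~\ref{cor:tailBoundForTauS}, Lemma~\ref{lm:constantProbHelperRoot} and the re-infection step need only $\P(\tau_{m_T}=\infty\mid m_1=1)\le C_2T(\wla^2n)^{-\alpha}<1$. You should prove that direction as described above and drop or restrict the lower bound.
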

Lemma \ref{lm:probOfSurvivalSRounds} provides two fundamental insights into the dynamics of the SIRS process on these structures. First, it establishes that the survival of a high-degree star within a larger network is essentially independent of its surroundings; as a subgraph, its behavior matches that of a standalone star. Second, it shows that starting with an initial $\varepsilon$-good round ensures that the overall survival of the process is stochastically equivalent to the survival of subsequent $\varepsilon$-good rounds in terms of order.  

We will also need the following tail bound, which can be obtained by a standard coupling argument.
\begin{lemma}\label{lm:concentrationModifiedSequentially}
	Suppose $S_m := X_1 + Y_1 + \dots + X_m + Y_m$, where $X_1, Y_1, X_2, Y_2, \dots$ are defined sequentially as follows.
	$$X_1 \sim \text{Hypo}(1, \alpha), \qquad X_{i+1} := 1_{\{Y_i > 0\}} \text{Hypo}(1, \alpha), \qquad Y_i \preceq 1_{\{X_i > 0\}}\text{Exp}(\la + 1).$$
	Let $S'_m \sim \text{Hypo}(\underbrace{1, \dots, 1}_{m\text{ times}}, \underbrace{\alpha, \dots, \alpha}_{m\text{ times}}, \underbrace{\la + 1, \dots, \la + 1}_{m\text{ times}})$, then $S'_m$ stochastically dominates $S_m$, i.e. 
	$$ S_m \preceq_{st} S'_m. $$
	As a result, we can use a Chernoff bound for the upper tail of $S_m$.
\end{lemma}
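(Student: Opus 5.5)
We prove the stochastic domination by coupling, step by step along the sequential construction. The plan is to build, on one probability space, i.i.d.\ variables $A_1,\dots,A_m \sim \Exp(1)$, $B_1,\dots,B_m \sim \Exp(\alpha)$ and $E_1,\dots,E_m \sim \Exp(\la+1)$, all mutually independent, and set $S'_m := \sum_{i=1}^m (A_i + B_i + E_i)$. This has exactly the law $\text{Hypo}(1,\dots,1,\alpha,\dots,\alpha,\la+1,\dots,\la+1)$. We then realize $X_1,Y_1,\dots,X_m,Y_m$ on this space so that pointwise
$$X_i \le A_i + B_i \quad\text{and}\quad Y_i \le E_i \qquad \text{for every } i.$$
Summing these inequalities gives $S_m \le S'_m$ almost surely, which is the claimed domination $S_m \preceq_{st} S'_m$.

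The construction goes by induction on $i$, with the filtration $\mathcal H_i := \sigma(X_1,Y_1,\dots,X_{i-1},Y_{i-1})$ and the auxiliary variables revealed in order.

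\emph{The $X$-step.} Set $X_1 := A_1 + B_1$, which has law $\text{Hypo}(1,\alpha)$. For $i \ge 2$, set $X_i := 1_{\{Y_{i-1}>0\}}(A_i + B_i)$. The pair $(A_i,B_i)$ is independent of $\mathcal H_i$ and of $Y_{i-1}$, so $X_i$ has exactly the prescribed conditional law. Moreover $X_i \le A_i + B_i$ holds trivially.

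\emph{The $Y$-step.} The hypothesis $Y_i \preceq 1_{\{X_i>0\}}\text{Exp}(\la+1)$ should be read conditionally: given the past $\sigma(\mathcal H_i, X_i)$, the conditional law of $Y_i$ is dominated by that of $1_{\{X_i>0\}}E$, with $E \sim \Exp(\la+1)$ independent. This is the natural reading, since in the application $Y_i$ is a first-passage time, namely the minimum of the infection and recovery clocks, and is therefore dominated by an $\Exp(\la+1)$ variable given the history.

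Under this reading we use the quantile coupling. Let $F_i(\cdot)$ be the conditional distribution function of $Y_i$ given the past, and let $G$ be the distribution function of $\Exp(\la+1)$. Set $U_i := G(E_i)$, which is uniform and independent of the past, and define $Y_i := F_i^{-1}(U_i)$. Then $Y_i$ has the correct conditional law. Conditional domination gives $F_i \ge G$ pointwise, hence $F_i^{-1} \le G^{-1}$, and so $Y_i \le E_i$ on $\{X_i > 0\}$. On $\{X_i = 0\}$ the domination forces $Y_i \le 0$. If the $Y_i$ are nonnegative, as they are as durations, this means $Y_i = 0 \le E_i$. Since the new randomness $(A_{i+1},B_{i+1},E_i)$ used at each stage is independent of everything constructed before, the joint law of the sequence $(X_i,Y_i)$ is the prescribed sequential one.

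For the tail bound, the upper tail of $S'_m$ satisfies the standard Chernoff bound: for $0<\theta<\min(1,\alpha,\la+1)$,
$$\P(S'_m \ge x) \le e^{-\theta x}\left(\frac{1}{1-\theta}\cdot\frac{\alpha}{\alpha-\theta}\cdot\frac{\la+1}{\la+1-\theta}\right)^m,$$
and domination transfers this bound to $S_m$.

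The main obstacle is only interpretive: stating the conditional form of the domination hypothesis precisely, and making sure that the quantile coupling uses fresh randomness $E_i$ that is independent of all earlier variables, including the $X_i$.
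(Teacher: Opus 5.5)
Your proposal is correct and is the ``standard coupling argument'' the paper invokes without giving details: with fresh independent $A_i,B_i,E_i$ and the conditional quantile transform you get $X_i\le A_i+B_i$ and $Y_i\le E_i$ pointwise, so $S_m\le S'_m$ almost surely. You are also right to read $Y_i \preceq 1_{\{X_i>0\}}\Exp(\la+1)$ as a domination conditional on the past, since the lemma fails under marginal domination alone (take $Y_1=Y_2\sim\Exp(\la+1)$).
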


As a corollary of the lemma, we obtain the following tail bound for $\tau_T$ when $T$ is sufficiently small.

\begin{corollary}\label{cor:tailBoundForTauS}
	Let $0 = \tau_1 < \tau_2 < \dots$ be the times at which the root $\rho$ gets infected. Then there exists a constant $c_e = c_e(\alpha) > 0$ such that for all $\la > 0$, and $n, T$ in the setting of Lemma \ref{lm:probOfSurvivalSRounds}, whenever $\mu(n) > 0$,
	$$ \P(\tau_T > 4T \mid |N(\rho)| = n, \tau_1^R > \varepsilon, \tau_T< \infty) \le e^{- c_e T}.$$
\end{corollary}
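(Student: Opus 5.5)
We plan to reduce the statement to Lemma \ref{lm:concentrationModifiedSequentially} by decomposing $\tau_T$ into the lengths of the first $T-1$ rounds of the root, and then to apply a Chernoff bound to the dominating hypoexponential variable $S'_{T}$.

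\textbf{Step 1: round decomposition.} On $\{\tau_T<\infty\}$, write $\tau_T=\sum_{i=1}^{T-1}(\tau_{i+1}-\tau_i)$. Within round $i$, the root starting from $\tau_i$ does three things in order:
\begin{itemize}
\item it recovers after an $\Exp(1)$ time;
\item it then loses immunity after an $\Exp(\alpha)$ time, so $X_i:=\tau_i^S-\tau_i\sim\text{Hypo}(1,\alpha)$ exactly, since these are the root's internal clocks;
\item it is then reinfected by some infected leaf, with $Y_i:=\tau_{i+1}-\tau_i^S$.
\end{itemize}
Set $X_{i+1}=0$ when round $i$ fails, which enforces the indicator structure of Lemma \ref{lm:concentrationModifiedSequentially}.

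\textbf{Step 2: the reinfection time $Y_i$ (main obstacle).} We need $Y_i\preceq 1_{\{X_i>0\}}\Exp(\la+1)$ conditionally on the past and on success. Note that $Y_i$ is not simply $\Exp(\la)$: the root's susceptible period depends on how many leaves are infected. We would handle this as follows. Conditioned on $\mathcal F_{\tau_i^S}$ and on the event that reinfection occurs before the infection leaves $N(\rho)\cup\{\rho\}$, the waiting time is the first ring among the infection clocks of the currently infected leaves. Each such clock has rate at least $\la$ whenever at least one leaf is infected. The race against the recovery clock (rate $1$) of that leaf means that, conditioned on the infection winning, the waiting time for a single-leaf race is $\Exp(\la+1)$. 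With more infected leaves the rate only increases, which yields stochastic domination by $\Exp(\la+1)$.

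The subtle point is the conditioning on $\tau_T<\infty$ together with $\tau_1^R>\varepsilon$. We would argue that conditioning on eventual success of later rounds does not bias the length of earlier rounds upward. The reason is that, by the strong Markov property at each $\tau_{i+1}$, the future success event is measurable after $\tau_{i+1}$. Its conditional probability given $\mathcal F_{\tau_{i+1}}$ would depend on the configuration of the leaves. Here we would invoke Lemma \ref{lm:probOfSurvivalSRounds} (the round-survival estimate from \cite{lam2024optimal}), which bounds these conditional probabilities uniformly in the configuration, up to constants depending on $\alpha$. This lets us absorb the conditioning into a constant factor. If needed, we would instead show monotonicity: longer rounds allow more leaves to become immune, which makes future success less likely, so the conditioning favours shorter rounds.

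\textbf{Step 3: Chernoff bound.} Lemma \ref{lm:concentrationModifiedSequentially} gives $\tau_T\preceq S'_{T}$, a sum of $T$ independent variables of each type with rates $1$, $\alpha$ and $\la+1$. The mean satisfies $\E S'_T\le T(1+1/\alpha+1)=T(2+1/\alpha)$, and for $\alpha$ bounded below this is less than $4T$ with a linear gap. Indeed, under the theorem's assumption $\alpha>1$ the mean is at most $3T$. For $\theta$ small depending on $\alpha$, Markov's inequality for $e^{\theta S'_T}$ gives
$$\P(S'_T>4T)\le \Big(\tfrac{1}{1-\theta}\cdot\tfrac{\alpha}{\alpha-\theta}\cdot\tfrac{\la+1}{\la+1-\theta}\Big)^T e^{-4\theta T}\le \Big(\tfrac{1}{1-\theta}\Big)^{3T}e^{-4\theta T}\le e^{-c_eT}.$$
Choosing $\theta$ small makes the exponent negative, uniformly in $\la>0$ since $\la+1\ge1$.

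If the conditioning in Step 2 only holds up to a factor $C$, we would compensate with a polynomially bounded factor coming from Lemma \ref{lm:probOfSurvivalSRounds}, and then shrink $c_e$, using $T\ge2$.
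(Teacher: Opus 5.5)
Your skeleton matches the paper's: split $\tau_T$ into the root's internal times $\tau_i^S-\tau_i$ and the reinfection times $\tau_{i+1}-\tau_i^S$, dominate via Lemma \ref{lm:concentrationModifiedSequentially}, and finish with a Chernoff bound. The gap is in the one non-routine step, the conditioning on $\{\tau_T<\infty\}$. You try to verify the hypotheses of Lemma \ref{lm:concentrationModifiedSequentially} round by round under the measure conditioned on \emph{global} success, and that does not work as proposed:
\begin{itemize}
\item Under $\P(\,\cdot\mid\tau_T<\infty)$ the $X_i$ are no longer exactly $\text{Hypo}(1,\alpha)$. A long immune period of the root makes the round more likely to fail, so the conditioning tilts $X_i$. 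The ``exactly'' in your Step 1 therefore fails.
\item A change of measure with bounded density transfers tail bounds only up to that constant. It does not preserve the stepwise dominations the lemma requires.
\item If a constant is paid at each of the $T-1$ rounds, the factors compound to $C^{T}$ and swamp $e^{-c_eT}$.
\item Lemma \ref{lm:probOfSurvivalSRounds} does not give the configuration-uniform bound on $\P(\,\cdot\mid\mathcal F_{\tau_{i+1}})$ you invoke. It conditions on the first round being $\varepsilon$-good, which is not yet determined at $\tau_{i+1}$.
\item The monotonicity fallback would need a correlation inequality that the non-attractive SIRS dynamics does not provide.
\end{itemize}

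The missing idea is simply never to condition on global success. With $\mathcal E:=\{|N(\rho)|=n,\ \tau_1^R>\varepsilon\}$, the paper writes
\[
\P(\tau_T>4T\mid \mathcal E,\ \tau_T<\infty)=\frac{\P\bigl(1_{\{\tau_T<\infty\}}\tau_T>4T\mid \mathcal E\bigr)}{\P(\tau_T<\infty\mid\mathcal E)} .
\]
The denominator is $\Theta_\alpha(1)$ by Lemma \ref{lm:probOfSurvivalSRounds}, since $C_2c<1$. The numerator is bounded under the \emph{unconditioned} law using
\[
1_{\{\tau_T<\infty\}}\tau_T\le\sum_{i=1}^{T-1}1_{\{\tau_i<\infty\}}(\tau_i^S-\tau_i)+\sum_{i=1}^{T-1}1_{\{\tau_{i+1}<\infty\}}(\tau_{i+1}-\tau_i^S).
\]
These truncated increments are exactly what the indicator structure of Lemma \ref{lm:concentrationModifiedSequentially} is built for. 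The only conditioning they need is the single-round one in your race argument for $Y_i$, and that is harmless because $\P(\tau_{i+1}-\tau_i^S>y,\ \tau_{i+1}<\infty\mid\mathcal F_{\tau_i^S})\le\P(\tau_{i+1}-\tau_i^S>y\mid\mathcal F_{\tau_i^S},\ \tau_{i+1}<\infty)$. Your closing sentence about a single factor points this way, but as written it patches the per-round argument rather than replacing it. With the division done once, your Step 3 goes through unchanged.

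Your insistence that $\alpha$ be bounded below is genuinely needed, and the paper's proof glosses over it. For $\alpha<1/3$ the root's own recovery and deimmunization clocks already make $\tau_T>4T$ overwhelmingly likely, even given $\tau_T<\infty$, so the bound cannot hold for all $\alpha$. Note, however, that $\alpha>1$ is assumed only in the Poisson case of Theorem \ref{thm:mainThmOfStrongSurvival}.
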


\begin{proof}[Proof of Corollary \ref{cor:tailBoundForTauS}]
	Note that 
	\begin{align*}
		\P(\tau_T > 4T \mid |N(\rho)| = n, \tau_1^R > \varepsilon, \tau_T < \infty) &= \dfrac{\P(1_{\{\tau_T< \infty\}} \cdot \tau_T > 4T \mid |N(\rho)| = n, \tau_1^R > \varepsilon)}{\P(\tau_T < \infty \mid |N(\rho)| = n, \tau_1^R > \varepsilon)},
		\intertext{and by Lemma \ref{lm:probOfSurvivalSRounds},}
		\P(\tau_T < \infty \mid |N(\rho)| = n, \tau_1^R > \varepsilon) &= \Theta_{\alpha}(1),
	\end{align*}
	so it remains to bound $\P(1_{\{\tau_T < \infty\}} \cdot \tau_T > 4T \mid |N(\rho)| = n, \tau_1^R > \varepsilon)  $. Note that
	\begin{align*}
		1_{\{\tau_T < \infty\}} \cdot \tau_T \le \sum_{i=1}^{T-1} 1_{\{\tau_i < \infty\}} \cdot (\tau_i^S - \tau_i) + \sum_{i=1}^{T - 1} 1_{\{\tau_{i+1} < \infty\}} (\tau_{i+1} - \tau_i^S).
	\end{align*}
	Setting $X_i := 1_{\{\tau_i < \infty\}} \cdot (\tau_i^S - \tau_i)$ and $Y_i := 1_{\{\tau_{i+1} < \infty\}} (\tau_{i+1} - \tau_i^S)$, then apply Lemma \ref{lm:concentrationModifiedSequentially}.
\end{proof}

Our strategy is as follows. Let $J$ be the number of vertices among the $r$-th generation vertices that has $(n-1)$ descendants, and denote these vertices $v_1^r, \dots, v_{J}^r$. Let $M$ be the number of vertices among those $J$ that are infected by time $(4T + 3r)$. Fix $T := \frac{c}{10^5}(\wla^2 n)^{\alpha}$, where $c$ is the constant in Lemma \ref{lm:probOfSurvivalSRounds}, $n$ is chosen appropriately later according to the aforementioned lemma, and $r \ll T$ is an integer such that $(\wal \wla)^r T \le 1$ to be chosen later. For clarity of presentation, we will treat $T$ as an integer.

Consider $t > 12T$, and denote $\tilde{t} := (t - 20T + 3r) \vee (4T + 3r)$. Condition on $M$, the number $N(t)$ of vertices among them that are (i) infected within time $[\tilde{t}, (t - 8T + 3r)]$, and (ii) its last infection interval before time $(t - 8T + 3r)$ is of length at least $\varepsilon$ (chosen according to Lemma \ref{lm:probOfSurvivalSRounds}), by the definition of $\phi(t)$, stochastically dominates
$$ \text{Bin}(M, e^{-\varepsilon} \cdot \inf_{0 \le s \le t - 8T + 3r} \phi(s)). $$

Condition on $|N(\rho)| = n$, 
\begin{equation}\label{eq:ExpectedM}
	\E M \ge n d^{r-1} \mu(n-1).
\end{equation}

Thus, by Lemma \ref{lm:PemLem23}, there exists constants $\delta', \varepsilon' > 0$ (we can choose $\delta'$ freely; $\varepsilon'$ will then depend on our choice of $r, n, \delta'$) such that the probability that at least one of $\{v_i^r\}_{1 \le i \le J}$ that are infected within $[\tilde{t}, t - 8T + 3r]$ is at least
\begin{equation*}
	\P(N(t) \ge 1) \ge \left((1 - \delta') (\E M) e^{-\varepsilon} \cdot \inf_{0 \le s \le t - 8T + 3r} \phi(s) \right) \wedge \varepsilon'.
\end{equation*}

Condition on $\{N(t) \ge 1\}$, let $v_I^r$ be a (random) vertex among the aforementioned $N(t)$ vertices. Now,
\begin{itemize}
	
	\item By Lemma \ref{lm:probOfSurvivalSRounds} and Corollary \ref{cor:tailBoundForTauS}, the probability that $v_I^r$ is infected at least $T$ times, and all the infections happen within $(4T)$ units of time (starting from when it is first infected within $[\tilde{t}, t - 8T + 3r]$) is at least
	$$\left( 1 - C_2\dfrac{T}{(\wla^2 n)^{\alpha}} \right)\left(1 - e^{-c_eT} \right) = \left( 1 - \dfrac{C_2c}{100} \right)\left(1 - e^{-c_eT} \right) . $$
	
	\item Condition on $v_I^r$ being infected at least $T$ times, there would be at least $T$ attempts to send those infections back to $\rho$ within $3r$ units of time. From Lemma \ref{lm:probInfectFarWithinShort}, the probability that one of these attempts succeed is at least $$1  - \left(1 - \dfrac{(\wal \wla)^r}{2} \right)^T\ge \dfrac{(\wal \wla)^r T}{4}, $$
	the last inequality holds since $(\wal \wla)^r T \le 1 $.
	
\end{itemize}

With all the above, the probability that $\rho$ is infected within time $[\tilde{t}, t - 4T + 6r]$ is at least 
\begin{equation}\label{eq:LowBoundProbReinfectRoot1}
	\dfrac{(\wal \wla)^r T}{4}\left( 1 - \dfrac{C_2c}{100} \right)\left(1 - e^{-c_eT} \right) \left(\left((1 - \delta') (\E M) e^{-\varepsilon} \cdot \inf_{0 \le s \le t - T + 3r} \phi(s) \right) \wedge \varepsilon' \right).
\end{equation}

We are now left to compute the probability that $\rho$ is infected within time $[t - 12T, t]$, condition on it being infected within time $[\tilde{t}, t - 4T + 6r]$. We have the following lemma.

\begin{lemma}\label{lm:constantProbHelperRoot}
	Fix $t > 12T$, and let $T := \frac{c}{10^5} (\wla^2 n)^{\alpha}$, $r \ll T$ such that $(\wla \wal)^r T \le 1$, and $n$ sufficiently large according to Lemma \ref{lm:probOfSurvivalSRounds}. Moreover, let $\tilde{t} := (t - 20T + 3r) \vee (4T + 3r)$. Then whenever $\mu(n) > 0$, 
	$$ \P\left( \exists s \in [t - 12T, t]: \rho \in I_s \mid \rho \in \exists s \in [\tilde{t}, t - 4T + 6r]: \rho \in I_s \right) = \Theta_{\alpha} (1).$$ 
\end{lemma}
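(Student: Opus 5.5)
Here is the plan. Let $s_0 \in [\tilde t, t-4T+6r]$ be the (stopping) time at which $\rho$ is infected. The upper bound $\le 1$ is trivial, so only a lower bound by a positive constant depending on $\alpha$ is needed.

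\textbf{Case $s_0 \ge t-12T$.} Since $r \ll T$, we have $t-4T+6r \le t$, so $s_0 \in [t-12T,t]$ and the probability is $1$.

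\textbf{Case $s_0 < t-12T$.} Then $s_0 \ge \tilde t \ge t-20T+3r$, so the target window $[t-12T,t]$ begins at most $8T$ after $s_0$ and has length $12T$. The aim is to show that, with probability bounded below in terms of $\alpha$ alone, the star $\{\rho\}\cup N(\rho)$ started at $s_0$ keeps re-infecting the root throughout a window of length about $20T$. I would then take the first re-infection time of $\rho$ after $t-12T$. Because consecutive re-infections are at most $O(1)$ apart on a typical good stretch, this time lands in $[t-12T,t]$.

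\textbf{Key steps, in order.}
\begin{enumerate}
\item By the strong Markov property at $s_0$, the conditioning reduces to starting from a configuration with $\rho$ infected. Since $\phi$ already takes an infimum over configurations, it suffices to bound uniformly over starting configurations.
\item With probability at least $e^{-\varepsilon}\cdot c(\alpha)$, the first round of $\rho$ is $\varepsilon$-good, i.e.\ $m_1=1$. For this I use the corollary giving $p_{\cdot\mapsto I}(x)\ge C\la$, together with \cite[Lemma 4.4]{lam2024optimal}, which keeps order $n$ leaves non-immune, so that $\rho$ meets many susceptible neighbours in its first infectious period.
\item I apply Lemma~\ref{lm:probOfSurvivalSRounds} with $20T$ (still $\le c(\wla^2 n)^\alpha$, because $T = c(\wla^2n)^\alpha/10^5$) in place of $T$. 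This gives $\P(\tau_{m_{20T}}<\infty\mid m_1=1)\ge 1 - 20C_2c/10^5$, which is bounded below.
\item Rounds have length of order one, more precisely $\mathrm{Hypo}(1,\alpha)$ plus $\mathrm{Exp}(\la+1)$. So by a lower-tail Chernoff bound, $20T$ rounds span time at least $12T$ with overwhelming probability. Corollary~\ref{cor:tailBoundForTauS}, via Lemma~\ref{lm:concentrationModifiedSequentially}, gives the matching upper tail, so these rounds cannot all end before $t-12T$. On this event $\rho$ is infected at some time in $[t-12T,t]$.
\end{enumerate}

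\textbf{Main obstacle.} The hard part is making the step ``rounds are $O(1)$ long, so one of them falls in the window'' rigorous. Every round of $\rho$ contains the deimmunization interval $\mathrm{Exp}(\alpha)$, and gaps between rounds are dominated by the hypoexponential sums of Lemma~\ref{lm:concentrationModifiedSequentially}. Hence on the survival event, no single gap of length $O(\log T)$ can skip over a window of length $12T$. I would first try a union bound over the $20T$ gaps, each exceeding $T$ with probability $e^{-\Theta(T)}$. A second difficulty is the dependence of the rounds on the rest of the tree. I would handle it exactly as Lemma~\ref{lm:probOfSurvivalSRounds} is stated, namely uniformly over the graph containing $\rho$, so the star's behaviour dominates regardless of the surroundings.
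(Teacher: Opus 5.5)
Your proposal is correct and follows essentially the same route as the paper. Both arguments reduce, via the strong Markov property, to $\rho\in I_0$ with the target window starting within $8T$. Both then condition on a good first round, use Lemma~\ref{lm:probOfSurvivalSRounds} to get order-$T$ rounds (the paper takes $10T$), and rule out two failure modes: (i) all rounds ending before the window, via the lower tail of the root's own infectious and immune durations, and (ii) a single gap covering the whole window.

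Your tools are sharper in both places. For (i) you use Chernoff where the paper uses Paley--Zygmund. For (ii) you take a union bound over per-gap exponential tails from Lemma~\ref{lm:concentrationModifiedSequentially}. This is more robust than the paper's Markov bound: applying Markov to each of the order-$T$ gaps and summing only gives an $O(1)$ bound, not an $O(1/T)$ one.

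One correction to your step 4: the upper tail of Corollary~\ref{cor:tailBoundForTauS} plays no role in either failure mode. Mode (i) is handled by the lower-tail bound, and mode (ii) by the per-gap union bound from your last paragraph.
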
 

\begin{proof}
	Note that since $$ (t - 12T) - \tilde{t} \le (t - 12T) - (t - 20T + 3r) < 8T,$$
	the problem reduces to showing that
	\begin{equation*}
		\inf_{u \in [0, 8T]} \P\left( \exists s \in [u, u+ 12T]: \rho \in I_s \mid \rho \in I_0 \right) = \Theta_{\alpha} (1).
	\end{equation*}
	
	Fix any $u \in [0, 8T]$. Then choosing $\varepsilon$ as in Lemma \ref{lm:probOfSurvivalSRounds}, and let $\Psi$ be the number of rounds that the root $\rho$ is successfully infected, we have
	\begin{align}
		&\P\left( \nexists s \in [u, u+ 12T]: \rho \in I_s \mid \rho \in I_0, \tau_1^R > \varepsilon \right) \nonumber \\
		\le&\ \P\left( \nexists s \in [u, u+ 12T]: \rho \in I_s, \Psi \ge 10T \mid \rho \in I_0, \tau_1^R > \varepsilon \right) + \P\left(\Psi < 10T \mid \rho \in I_0, \tau_1^R > \varepsilon \right), \nonumber 
		\intertext{and applying Lemma \ref{lm:probOfSurvivalSRounds} for the second term and notice that $C_2c < 1$, the above is}
		<&\ \P\left( \nexists s \in [u, u+ 12T]: \rho \in I_s, \Psi \ge 10T \mid \rho \in I_0, \tau_1^R > \varepsilon \right) + \dfrac{1}{10^4}. \label{eq:constantProbHelperRoot1}
	\end{align}
	
	Now, the event $\{\nexists s \in [u, u+ 12T]: \rho \in I_s, \Psi \ge 10T\}$ implies that for the $10T$ successful rounds, either all of them are within $[0, u]$, or there is a round $i < 10T$ for which $\tau_i^R < u < 12 T< \tau_{i+1} < \infty$, i.e. the interval $[u,  u + 12T]$ is well-contained between two infection intervals. In the first case, denote $\xi_i := \tau_i^R - \tau_i$ if $\tau_i < \infty$, and exponentially distributed with rate $1$ (independent of the SIRS process) if $\tau_i = \infty$. Omitting $\rho \in I_0$ in the conditional probabilities for clarity of presentation, we have
	\begin{align}
		\P\left(\tau_{10T + 1} < u \mid \rho \in I_0, \tau_1^R > \varepsilon \right) &\le \P\left(\tau_{10T + 1} < 8T \mid \tau_1^R > \varepsilon \right)\nonumber \\
		&\le \P\left( \tau_{10T+ 1} < \infty, \sum_{i=2}^{10T} \xi_i < 8T \ \bigg\vert \ \tau_1^R > \varepsilon \right) \nonumber \\
		&\le \P\left(\sum_{i=2}^{10T} \xi_i < 8T \ \bigg\vert \ \tau_1^R > \varepsilon \right) = \P\left( \text{Gam}(10T - 1, 1) < 8T \right), \nonumber
		\intertext{and by Paley-Zygmund inequality, the above is at most}
		&\le 1 - \left(1 - \dfrac{8T}{10T - 1} \right)^2 \cdot \dfrac{(10 T - 1)^2}{(10T - 1)^2 + (10 T - 1)} \le \dfrac{25}{26}, \label{eq:constantProbHelperRoot2}
	\end{align}
	for sufficiently large $T$.
	
	In the second case,  condition on $\mathcal F_{\tau_l^R}$, on the event $\{\tau_l^R < \infty\}$ and $\{\rho \in I_0, \tau_1^R > \varepsilon\}$, we have
	$$ \E\left( 1_{\{\tau_{l +1} < \infty \}}(\tau_{l+1} - \tau_l^R) \ \bigg\vert \ \mathcal F_{\tau_i^R} \right) \le \left(\dfrac{1}{\alpha} + \dfrac{1}{\la + 1}\right)1_{\{\tau_l < \infty\}}.$$ 
	Thus, by Markov inequality and applying Law of Total Expectation suitably, we have
	\begin{align}
		&\P\left( \Psi \ge 10T, \exists l < 10T: \tau_{l + 1} - \tau_l^R > 12T \mid \rho \in I_0, \tau_1^R > \varepsilon \right) \nonumber \\
		\le&\ \P\left( \exists l < 10T: 12T < \tau_{l + 1} - \tau_l^R < \infty \mid \rho \in I_0, \tau_1^R > \varepsilon \right) \nonumber \\
		\lesssim_{\alpha}&\ \dfrac{\frac{1}{\alpha} + \frac{1}{\la + 1}}{12 T} \lesssim_{\alpha} (\la^2 n)^{-\alpha}. \label{eq:constantProbHelperRoot3}
	\end{align}
	Combining \eqref{eq:constantProbHelperRoot1}, \eqref{eq:constantProbHelperRoot2}, and \eqref{eq:constantProbHelperRoot3} gives the desired result.
\end{proof}

From equations \eqref{eq:ExpectedM}, \eqref{eq:LowBoundProbReinfectRoot1}, and Lemma \ref{lm:constantProbHelperRoot}, by using Pemantle's strategy, it follows that strong survival happens whenever 
\begin{equation}\label{eq:critStrongSurvivalEq}
	C_{\alpha} (\wla \wal d)^{r-1} (\wla^2 n)^{\alpha} n\mu(n-1) \wla > 1,
\end{equation}
for some $C_{\alpha} \in (0, 1)$ only dependent on $\alpha$ and $n$ satisfying the constraints in Lemma \ref{lm:probOfSurvivalSRounds}. 

\begin{proof}[Proof of Theorem \ref{thm:mainThmOfStrongSurvival}]
	It remains to show that it is possible to choose $n$ and $r$ satisfying the constraints of Lemma \ref{lm:probOfSurvivalSRounds}, equation \eqref{eq:critStrongSurvivalEq}, and such that $(\wal \wla)^{r} T \le 1, r \ll_n T = \Theta_{\alpha} ((\wla^2 n)^{\alpha})$. For $\la > (\wal d - 1)^{-1}$, \eqref{eq:critStrongSurvivalEq} becomes finding $n, r$ such that 
	\begin{align*}
		C_{\alpha, d} (\wla \wal d)^{r-1} n^{\alpha + 1}\mu(n-1) > 1, \\
		\dfrac{\log T}{\log \wal^{-1} + \log \wla^{-1}} \le r \ll_{\alpha, d} n^{\alpha}
	\end{align*}
	for some $C_{\alpha, d} \in (0, 1)$ dependent on $\alpha, d$. Since $\wla \wal d > 1$, the above implies that we are to choose $r$ such that 
	\begin{align*}
		r \ge - \dfrac{\log \mu(n-1) + \log C_{\alpha, d}  + (\alpha + 1) \log n}{\log (\wla \wal d)} + 1
	\end{align*}
	and this can be done if there are arbitrarily large $n$ such that 
	$- \log \mu(n - 1) - (\alpha + 1) \log n \ll_{\alpha, d} n^{\alpha}.$
	This is easily satisfied due to the constraint $\limsup \mu(n)\exp(n^{\alpha - \delta}) = \infty$ for all $\delta > 0$ sufficiently small.
\end{proof}

\bibliographystyle{alpha}
\bibliography{refs}

\end{document}